\numberwithin{equation}{section}
\newtheorem{thm}{Theorem}[section]
\newtheorem{lem}[thm]{Lemma}
\newtheorem{rem}[thm]{Remark}
\newtheorem{prop}[thm]{Proposition}
\newtheorem{claim}[thm]{Claim}
\newtheorem{ques}[thm]{Question}
\newtheorem{conj}[thm]{Conjecture}
\newcommand\ct{{\text{\rm ct}}}
\newcommand\cE{{\mathcal{E}}}
\newcommand\cH{{\mathcal{H}}}
\newcommand\cX{{\mathcal{X}}}
\newcommand\cY{{\mathcal{Y}}}
\newcommand\bZ{{\mathbb Z}}
\newcommand\bC{{\mathbb C}}
\newcommand\bQ{{\mathbb Q}}
\newcommand\bP{{\mathbb P}}
\begin{document}
\title{On threefold canonical thresholds}
\author{Jheng-Jie Chen}
\address{\rm Department of Mathematics, National Central University, Taoyuan City, 320, Taiwan}
\email{jhengjie@math.ncu.edu.tw}

\maketitle

\begin{abstract}
We show that the set of threefold canonical thresholds satisfies the ascending chain condition.
Moreover, we derive that threefold canonical thresholds in the interval $ (\frac{1}{2}, 1)$ consists of $ \{ \frac{1}{2}+\frac{1}{n}\}_{n \ge 3} \cup \{ \frac{4}{5}\}$.
\end{abstract}

\section{introduction}
In higher dimensional birational geometry, it is a very natural and important question to measure singularities of a given variety $X$ or more generally, to measure singularities of a given pair $(X, S)$ which consists of a variety and an effective divisor $S$ in $X$. For example, in minimal model program, one hopes to find a good birational model by a sequence of divisorial contractions and flips and also one hopes to understand the birational relations between models.
The termination of three-dimensional terminal flips can be seen by introducing a measurement of complexity of singularities called ``difficulty" (see, for instance \cite{Sho85, KMM87}). Since difficulty is a non-negative integer and it is strictly decreasing after a flip, hence it follows that termination of threefold flips.

Another example is the so-called Sarkisov Program, which try to link two birational models such that each one is a Mori fiber space.
In \cite{Corti}, Corti showed the existence of threefold Sarkisov program which connects two birational Mori fiber spaces by finitely many {\it Sarkisov links}. The key measurement is the Sarkisov degree $(\mu, c, e)$, where canonical threshold $c=\ct(X, \cH)$ plays the more subtle and crucial role. Indeed, as noted in \cite[p233-234]{Corti} if the set of threefold canonical thresholds satisfies the ascending chain condition (ACC), then it follows almost immediately that birational Mori fiber spaces are connected by finitely many Sarkisov links. Hence it is natural and very interesting to consider the following conjecture for canonical thresholds, which is analogous to that of log canonical thresholds and minimal log discrepancies (see \cite{Kol92}, \cite{Kol97}, \cite{MP04}, \cite{Prok08} and \cite{Stepanov} for example).

\begin{conj}\label{ACCcan} The set
$$\mathcal{T}_{n}^{\textup{can}} \colon = \{\ct(X, S) | \dim X = n, S \text{ is integral and effective}\}$$
satisfies the ascending chain condition.
\end{conj}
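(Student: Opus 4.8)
The plan is to run, for canonical thresholds, the strategy that Hacon, McKernan and Xu developed for the ascending chain condition of \emph{log} canonical thresholds: reduce to a germ, extract the divisor that computes the threshold, transfer the problem by adjunction to a numerically trivial pair of dimension $n-1$, and finish with a global ACC / boundedness statement in that smaller dimension. Throughout, $X$ is assumed (as is implicit in the definition of $\ct$) to be a normal $\QQ$-factorial variety with at worst canonical singularities, and I fix a putative strictly increasing sequence $c_i=\ct(X_i,S_i)$ with $\dim X_i=n$ and $c_i\nearrow\ell$, aiming at a contradiction; discarding finitely many terms I may assume $c_i<1$.

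\emph{Reduction to a germ and divisorial extraction.} On a log resolution of $(X_i,S_i)$ only finitely many exceptional divisors occur, so $c_i=a(E_i;X_i)/\mult_{E_i}(S_i)$ for some exceptional prime $E_i$ over $X_i$; equivalently $a(E_i;X_i,c_iS_i)=0$. Since $(X_i,c_iS_i)$ is klt, running a $(K_{X_i}+c_iS_i)$-MMP over a neighbourhood of the centre $Z_i$ of $E_i$ extracts $E_i$: there is a projective birational $\pi_i\colon Y_i\to X_i$ with $\rho(Y_i/X_i)=1$, $\mathrm{Exc}(\pi_i)=E_i$, $-E_i$ relatively ample, and
\[
K_{Y_i}+c_iS_{Y_i}=\pi_i^*(K_{X_i}+c_iS_i),
\]
where $S_{Y_i}$ is the strict transform of $S_i$; in particular $K_{Y_i}+c_iS_{Y_i}\equiv_{X_i}0$ and $(Y_i,c_iS_{Y_i})$ is canonical. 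Everything now lives over the germ of $X_i$ along $Z_i$.

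\emph{Adjunction and global ACC.} Restricting the crepant identity to $E_i$ and applying adjunction twice — first along $E_i\subset Y_i$, then along a general fibre $F_i$ of the contraction $E_i\to Z_i$ — produces a projective klt pair $(F_i,\Theta_i)$ of dimension $n-1-\dim Z_i$ with $K_{F_i}+\Theta_i\equiv0$, whose boundary coefficients are, on the one hand, numbers of the shape $c_i\cdot(\text{positive integer})$ coming from $c_iS_{Y_i}|_{E_i}$, and on the other hand the coefficients of the different $\mathrm{Diff}_{E_i}(0)$, governed by the indices and the codimension-one discrepancies of $Y_i$ along $E_i$. If the set $I$ of all these ``ambient'' coefficients (over all $i$) satisfies the descending chain condition, then the global ACC theorem for numerically trivial log Calabi--Yau pairs in dimension $n-1$ forces the full coefficient set of the family $\{(F_i,\Theta_i)\}$, hence $\{c_i\}$, to be finite, contradicting strict monotonicity. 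An essentially equivalent endgame is to show that the local models $(Y_i\to X_i,\,c_iS_{Y_i}+E_i)$ are birationally bounded — they are Fano type over $X_i$, since $-K_{Y_i}\equiv_{X_i}c_iS_{Y_i}\ge0$ and $\rho(Y_i/X_i)=1$ — spread them over a single family, and conclude via the ACC for volumes; but this route requires the same control of ambient invariants in order to upgrade ``canonical'' to ``$\delta$-lc'' away from $E_i$.

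\emph{The obstacle, and the threefold case.} The heart of the matter is thus the descending chain condition for the intrinsic local invariants of the canonical singularities $X_i$ \emph{with no prescribed boundary} — the indices, the codimension-one discrepancies over $E_i$, and the integers $\mult_{E_i}(S_i)$; relatedly, one must know that $(Y_i,c_iS_{Y_i})$ being canonical keeps the local intersection multiplicities of $S_{Y_i}$ along $E_i$ from blowing up. In arbitrary dimension this is, in effect, the still-open ACC conjecture for minimal log discrepancies, so establishing the conjecture in full would require a genuinely new input here. For $n=3$, however, terminal and canonical threefold singularities and their divisorial extractions are completely classified (Reid, Mori, and the theory of divisorial contractions), so one has an explicit finite list of the possible triples $(Y_i,E_i,a(E_i;X_i))$ and bounded possibilities for $\mult_{E_i}(S_i)$ in each case; hence $I$ is a DCC (indeed finite-type) set, the argument above closes, and $\mathcal{T}_3^{\mathrm{can}}$ satisfies ACC. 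Refining this bookkeeping — determining precisely which extraction types and which multiplicities can produce a threshold in $(\tfrac12,1)$ — should single out the list $\{\tfrac12+\tfrac1n\}_{n\ge3}\cup\{\tfrac45\}$. I expect the genuinely delicate part, even for $n=3$, to be the casework around $cA$- and $cD$-type points, where long chains of discrepancy-zero divisors force one to track the full resolution rather than a single blow-up and where the coarse estimates are not sharp.
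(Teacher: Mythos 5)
You are addressing the general conjecture, which neither the paper nor your proposal actually proves: the paper establishes only the case $n=3$ (Theorem \ref{3CTACC}), and you concede that in arbitrary dimension your argument bottoms out in the open ACC conjecture for minimal log discrepancies. So only your sketch for $n=3$ can be compared with the paper, and it has two genuine gaps. The first is in the adjunction step. A divisor $E_i$ computing $\ct$ has \emph{discrepancy} zero with respect to $(X_i,c_iS_i)$, i.e.\ log discrepancy one, so it does not appear in the crepant pullback: as you correctly write, $K_{Y_i}+c_iS_{Y_i}=\pi_i^*(K_{X_i}+c_iS_i)$ with no $E_i$ on the left. To restrict to $E_i$ you must add $E_i$ to the boundary, and then $K_{Y_i}+E_i+c_iS_{Y_i}\equiv_{X_i}E_i$ is \emph{anti-ample} over $X_i$ (you arranged $-E_i$ relatively ample). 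The general fibre pair $(F_i,\Theta_i)$ is therefore log Fano, not numerically trivial, and the global ACC theorem for log Calabi--Yau pairs you invoke does not apply. This is exactly the structural reason the log-canonical-threshold strategy does not transfer verbatim to canonical thresholds, and your sketch offers no replacement input.

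The second gap is the assertion that for $n=3$ the classification yields ``an explicit finite list of possible triples $(Y_i,E_i,a(E_i;X_i))$ and bounded possibilities for $\mult_{E_i}(S_i)$,'' so that the relevant coefficient set is finite. This is false, and it is precisely where all the work lies. By Theorem \ref{3-folddc} the divisorial contractions with weighted discrepancy $a\ge 5$ form \emph{infinite} series of weighted blow-ups over smooth, $cA$, $cA/n$, $cD$ and $cD/2$ points, with unbounded weights, unbounded discrepancies $a$ and unbounded multiplicities $m$; only the complementary cases land in the ACC set $\aleph_4$. The paper's ACC proof is not a finiteness-of-types argument: for a putative increasing sequence it constructs, within each family, auxiliary weights $w^i_j$ interpolating between two members, verifies irreducibility of the corresponding exceptional divisors, and combines the two elementary inequalities on weighted multiplicities with Stepanov's lemma on non-increasing Newton polytopes to force $\ct_i\ge\ct_j$ (Assumption A and Proposition \ref{assum}). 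Without a substitute for this step your argument does not close even in dimension three; the same unboundedness also undermines the claim that the sharp list $\{\tfrac12+\tfrac1n\}_{n\ge3}\cup\{\tfrac45\}$ would follow by mere ``bookkeeping.''
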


Notice that Hacon, M$^{\textup{c}}$kernan and Xu showed that the set of log canonical thresholds in any dimension satisfies the ACC in \cite{HMX14}. In other words, they proved the ACC for the set of $0$-log canonical threshold. However, for positive real number $\epsilon$, the ACC conjecture on $\epsilon$-log canonical thresholds remains open, even in dimension three. Note that Conjecture \ref{ACCcan} is the same as the ACC conjecture on $1$-log canonical thresholds when there is no boundary divisor and $S$ is an integral and effective divisor.

The purpose of this article is to show that Conjecture \ref{ACCcan} holds in dimension three.

\begin{thm}\label{3CTACC} The set $\mathcal{T}_{3}^{\textup{can}}$ satisfies the ascending chain condition.
\end{thm}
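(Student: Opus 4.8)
The plan is to reduce Theorem~\ref{3CTACC}, via the classification of three-dimensional divisorial contractions, to an arithmetic statement about weighted blow-ups, and then to settle that statement. Suppose, for contradiction, that $c_1<c_2<\cdots$ is a strictly increasing sequence in $\mathcal{T}_{3}^{\mathrm{can}}$ with $c_i=\ct(X_i,S_i)$; since $\ct\le 1$ always, $c_i<1$ for all $i$. Localising at a point $x_i\in S_i$ where the threshold is attained and performing standard reductions, we may assume each $(X_i\ni x_i)$ is a $\mathbb{Q}$-factorial terminal threefold germ and that $c_i$ is computed by a divisorial valuation $E_i$ over $x_i$. The extraction lemma, a consequence of the three-dimensional minimal model program, then produces a crepant divisorial contraction
\[
f_i\colon (Y_i\supset E_i)\to (X_i\ni x_i),\qquad K_{Y_i}+c_i\,(S_i)_{Y_i}=f_i^{*}(K_{X_i}+c_iS_i),
\]
with $Y_i$ terminal $\mathbb{Q}$-factorial and $\rho(Y_i/X_i)=1$, so that $c_i=a(E_i;X_i)/\mult_{E_i}S_i$, and with $E_i$ contracted either to the curve through $x_i$ or to $x_i$ itself.

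Suppose first that infinitely many $E_i$ contract to curves. Then, by the classification of divisorial contractions to curves (Tziolas and others), $a(E_i;X_i)$ is bounded by a universal constant $B$, so these $c_i$ lie in $\{\,n/m : 1\le n\le B,\ m\in\mathbb{Z}_{>0}\,\}$, a set satisfying ACC, contradicting strict monotonicity. Hence we may assume every $E_i$ contracts to $x_i$. By the Mori--Reid structure theorem, $(X_i\ni x_i)$ is a cyclic quotient of a hypersurface germ, $X_i=\{h_i=0\}/\mu_{r_i}\subset\bA^{4}/\mu_{r_i}$ (with $h_i$ a coordinate in the quotient-of-smooth subcase); and by the classification of divisorial contractions to terminal threefold points --- due to Kawamata in the smooth and cyclic-quotient cases, and to Kawakita, Hayakawa, Yamamoto and others for the $cA$, $cD$, $cE$ points and their quotients --- in suitable coordinates $f_i$ is the restriction of a weighted blow-up with integral weight vector $w_i=(w_{i,1},\dots,w_{i,4})$. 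Consequently
\[
a(E_i;X_i)=\tfrac{1}{r_i}\Bigl(\textstyle\sum_j w_{i,j}-\operatorname{wt}_{w_i}(h_i)\Bigr)-1,\qquad \mult_{E_i}S_i=\tfrac{1}{r_i}\operatorname{wt}_{w_i}(g_i),
\]
where $g_i$ is a local equation of $S_i$ on $X_i$, so that $c_i$ is the ratio of the positive integers $\sum_j w_{i,j}-\operatorname{wt}_{w_i}(h_i)-r_i$ and $\operatorname{wt}_{w_i}(g_i)$ attached to an explicit combinatorial datum.

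It then remains to prove ACC for the rational numbers arising this way. Since there are only finitely many analytic types of terminal threefold germ up to deformation, each carrying only a bounded amount of combinatorial structure besides its index, one argues type by type. The decisive point is that the conditions encoding that $E_i$ actually \emph{computes} the canonical threshold --- that $(Y_i,c_i(S_i)_{Y_i})$ is canonical while $Y_i$ is terminal $\mathbb{Q}$-factorial with $\rho(Y_i/X_i)=1$ --- pin down the weighted order $\operatorname{wt}_{w_i}(g_i)$ in terms of $w_i$ and $h_i$ up to finitely many possibilities once $w_i$ is suitably normalised. Together with $c_i\ge c_1>0$, this forbids a strictly increasing sequence: such a sequence would force the index and the weights to grow while numerator and denominator stayed in fixed proportion, which the classification excludes. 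This contradiction proves the theorem.

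The main obstacle is precisely this last analysis in the unbounded regime: one must organise the classification of threefold divisorial contractions into a single workable form --- especially for the non-toric $cD$ and $cE$ points and for the unbounded-index families $cA/r$, $cD/r$, where the statement that $f_i$ is a weighted blow-up must be upgraded to an explicit description of the admissible weight vectors --- and then establish the arithmetic ACC for the resulting ratios with $r_i$ and $w_i$ unbounded (one should also check that the extracted model $Y_i$ can genuinely be taken terminal, not merely canonical). This is where a threshold close to $1$ (say $c>\tfrac12$) becomes tractable: the inequality constrains the weighted orders so tightly that only a short explicit list of weighted blow-ups survives, which is what yields the sharper conclusion $\mathcal{T}_{3}^{\mathrm{can}}\cap(\tfrac12,1)=\{\tfrac12+\tfrac1n\}_{n\ge 3}\cup\{\tfrac45\}$. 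A global-to-local argument in the spirit of Hacon--McKernan--Xu is conceivable, but in dimension three the explicit classification is a more direct and more informative route.
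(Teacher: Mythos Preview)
Your reduction is correct and matches the paper's: the threshold is computed by a divisorial contraction, and by Kawakita's classification a contraction to a point with weighted discrepancy $>4$ is a weighted blow-up over a point of type $sm$, $cA$, $cA/n$, $cD$ or $cD/2$ (there is no $cE$ or $cD/r$ with $r>2$ in this list, and curve contractions have discrepancy~$1$, so those fall into $\aleph_4$). The decomposition $\mathcal{T}_{3}^{\mathrm{can}} = \aleph_4 \cup \bigcup_* \mathcal{T}_{3,*}^{\mathrm{can}}$ is exactly the paper's set-up.

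The gap is the last step, which you yourself flag as ``the main obstacle'' but then resolve incorrectly. Your assertion that the conditions ``pin down the weighted order $\operatorname{wt}_{w_i}(g_i)$ \dots\ up to finitely many possibilities once $w_i$ is suitably normalised'' is false: for fixed weights the weighted multiplicity $m$ ranges over infinitely many integers, and nothing in the classification bounds it. The paper's actual mechanism is a \emph{comparison} argument between two different weighted blow-ups. One first passes to a subsequence so that the discrepancies $a_k$ and the integers $d_k$ are non-decreasing and, crucially, the Newton polytopes $\Gamma^+(f_k)$ are \emph{non-increasing} (Stepanov's lemma on the well-quasi-ordering of Newton diagrams). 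For $i<j$ one then constructs an auxiliary admissible weight $w^i_j$ on $X_j$, built from $w_i$ but adapted to the defining equation of $X_j$, satisfying: $n_i w_i \preceq n_j w^i_j$ coordinatewise, so $\Gamma^+(f_i)\supseteq\Gamma^+(f_j)$ gives $n_i w_i(f_i)\le n_j w^i_j(f_j)$; the exceptional divisor of the weighted blow-up of $X_j$ with weight $w^i_j$ is \emph{irreducible}, hence defines a genuine valuation; and its weighted discrepancy equals $a_i$. Then $\ct_j \le a_i/(n_j w^i_j(f_j)) \le a_i/(n_i w_i(f_i)) = \ct_i$, a contradiction. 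The content of the proof is the type-by-type construction of $w^i_j$ and the irreducibility check (Lemmas~\ref{lower}, \ref{lower5}, \ref{cDlower}, \ref{cDlower2}, \ref{cD2lower}, \ref{cD2lower2}); for $cA/n$ one must in addition first bound the index $n$ in terms of $\ct_1$ (Lemma~\ref{boundindex}) so that one may assume the ambient cyclic quotient is fixed along the sequence. None of this apparatus is present in your sketch, and ``the classification excludes'' unbounded growth is not by itself an argument.
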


Furthermore, with more detailed studies, we show that the set $\mathcal{T}_{3}^{\textup{can}}$ is quite sparse in the interval $(\frac{1}{2},1)$. More precisely, we have the following:

\begin{thm}\label{3CT2} Considering the threefold canonical thresholds in the interval $(\frac{1}{2},1)$, we have
 $$\mathcal{T}_{3}^{\textup{can}} \cap (\frac{1}{2}, 1)=\{\frac{1}{2}+\frac{1}{n} \}_{n \ge 3} \cup \{\frac{4}{5} \}.$$
\end{thm}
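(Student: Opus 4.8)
The plan is to combine the structural description of the valuation computing a canonical threshold (the same one used for Theorem~\ref{3CTACC}) with the classification of three‑dimensional divisorial contractions, read off the list of possible values, and then write down pairs realizing each of them. We may assume $\ct(X,S)>0$, so $(X,0)$ is canonical; put $c=\ct(X,S)\in(\tfrac12,1)$.

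\textbf{Step 1 (the computing valuation is an extraction over a point).} The threshold $c$ is computed by a divisorial valuation $E$, which, after running a minimal model program on its extraction, we realize as the exceptional divisor of a $\mathbb Q$-factorial terminal divisorial contraction $f\colon Y\to X$ with $\rho(Y/X)=1$; thus $c=a/m$ with $a:=a(E,X,0)$, $m:=\mult_E(S)$, and the condition $c\in(\tfrac12,1)$ reads $a<m<2a$. The centre $Z:=f(E)$ meets $S$ (otherwise $m=0$ and $c$ is not governed by $S$), and $Z$ cannot be a curve: restricting to the generic point of $Z$ would present $c$ as a surface canonical threshold, but surface canonical thresholds never lie in $(\tfrac12,1)$ -- a curve which is singular on a smooth surface has threshold at most $\tfrac12$, while passing through a canonical surface singularity forces the threshold to $0$. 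Hence $Z=P$ is a closed point with $P\in S$, and one checks similarly that $X$ is terminal (possibly smooth) at $P$.

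\textbf{Step 2 (enumeration).} We now go through the classification of divisorial contractions $f\colon Y\to X$ contracting $E$ to a point. If $P$ is smooth, Kawakita's theorem makes $f$ a weighted blow-up with weights $(1,a_2,a_3)$ in suitable local coordinates, so $a=a_2+a_3$ and $m=v_{(1,a_2,a_3)}(S)$; if $P$ is a nontrivial terminal singularity, the classification of Kawamata, Hayakawa and Kawakita bounds $a$ in terms of the index and the local type. In each branch we impose two conditions: (i) $a<m<2a$; and (ii) $E$ \emph{computes} $c$, equivalently $(Y,cS_Y)$ is canonical -- which, by inversion of adjunction along $E$, forces the weighted tangent cone $E\cap S_Y\subset E$ together with the quotient singularities of $Y$ acquired from the blow-up to be very mild (for instance, in the smooth-point case $\{f=0\}\subset\mathbb P(1,a_2,a_3)$ must avoid the singular locus of $\mathbb P(1,a_2,a_3)$, or meet it only in a way absorbed by the ambient quotient singularity). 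Conditions (i) and (ii) together leave only finitely many patterns of local data; running through them yields exactly the values $\{\tfrac12+\tfrac1n\}_{n\ge3}\cup\{\tfrac45\}$, the members $\tfrac12+\tfrac1n$ coming from weights of the shape $(1,2,n)$ (degenerating to $(1,1,k)$ and $(1,1,1)$ for the small $n$) and the sporadic $\tfrac45$ from one further pattern.

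\textbf{Step 3 (attainment and the main obstacle).} Each value is realized. For $n\ge3$ take $X=\mathbb C^3$ and $S=\{x^{2n}+y^n+z^2=0\}$; the $(1,2,n)$-weighted blow-up $f\colon Y\to X$ has $a(E,X,0)=n+2$ and $\mult_E(S)=2n$, the curve $E\cap S_Y=\{x^{2n}+y^n+z^2=0\}\subset\mathbb P(1,2,n)$ is smooth, and one checks that $S_Y$ and $Y$ are mild enough that $(Y,cS_Y)$ stays canonical for all $c<1$; hence $\ct(X,S)=\tfrac12+\tfrac1n$. The value $\tfrac45$ is attained by an explicit pair (again with $X=\mathbb C^3$), which one verifies directly. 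The crux is Step~2: carrying the classification of threefold divisorial contractions -- above all the extractions over nontrivial terminal points -- through conditions (i) and (ii); and, in the attainment half of Step~3, verifying condition (ii), i.e.\ that after the indicated weighted blow-up no deeper valuation over $Y$ has a smaller discrepancy ratio. Both reduce to controlling the singularities of $Y$ and of the strict transform $S_Y$, which is where inversion of adjunction and an explicit study of the weighted tangent cone enter.
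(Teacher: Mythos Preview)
Your overall architecture (reduce to divisorial contractions over points, invoke the Kawakita--Kawamata--Hayakawa classification, enumerate) matches the paper's. But Step~2 is not a proof: the assertion that conditions (i) $a<m<2a$ and (ii) ``$(Y,cS_Y)$ canonical'' leave only finitely many patterns, and that running through them gives exactly $\{\tfrac12+\tfrac1n\}\cup\{\tfrac45\}$, is precisely the theorem you are trying to prove. The proposed mechanism --- inversion of adjunction forcing the weighted tangent cone $E\cap S_Y$ to be mild --- does not by itself bound the weights: for the smooth case alone there are infinitely many admissible $(1,a,b)$, and for each one infinitely many degrees $m$ with $a+b<m<2(a+b)$ and a smooth tangent cone in $\bP(1,a,b)$, yet most resulting ratios $(a+b)/m$ are \emph{not} canonical thresholds. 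What actually pins down the value is a completely different device used in the paper: for the extraction $\sigma_w$ computing $\ct=a/m$ one manufactures auxiliary admissible weights $w'$ with $w'\succeq\mu w$ and irreducible exceptional divisor, obtaining the two-sided estimate $\lfloor\tfrac{a'}{a}m\rfloor\ge m'\ge\lceil\mu m\rceil$; playing several such $w'$ off against one another produces arithmetic contradictions unless $m$ takes one of a short list of values. This comparison trick is carried out separately, and quite delicately, for each of the types $sm$, $cA$, $cA/n$, $cD$, $cD/2$ (Propositions~\ref{sm2}, \ref{cA2}, \ref{cAn2}, \ref{cD2}, \ref{cD22}); your condition (ii) does not substitute for it.

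There is also a concrete error in Step~3. The value $\tfrac45$ is \emph{not} attained with $X=\bC^3$: Proposition~\ref{sm2} (whose proof covers all weighted discrepancies, not only those $\ge5$) shows that every smooth-point canonical threshold in $(\tfrac12,1)$ is of the form $\tfrac12+\tfrac1t$, and $\tfrac45$ is not of this form. The realizing example (due to Prokhorov) lives over a $cA_1$ point: $X=\{xy+x^7+z^2+u^3=0\}\subset\bC^4$ with $S=\{y=0\}$. Separately, your attainment of $\tfrac12+\tfrac1n$ via the $(1,2,n)$-blow-up of $\{x^{2n}+y^n+z^2=0\}$ only yields a terminal extraction when $\gcd(2,n)=1$; for even $n$ you need a different witness (the paper simply quotes Stepanov's Brieskorn formula $\ct(\bC^3,\{x^2+y^t+z^c=0\})=\tfrac12+\tfrac1t$ for $c\ge\textup{lcm}(2,t)$).
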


We would like to remark that after the completion of this work, there are some subsequent works along the same direction. For example, the author and Han, Liu and Luo independently obtain that the accumulation points of the set $\mathcal{T}_{3}^{\textup{can}}$ consists of $\{1/k\ |\ k\geq 2 \textup{ is an integer}\}\cup\{0\}$ and also generalize Theorem \ref{3CTACC} to pairs (see \cite[Theorems 1 and 2]{Ch22} and \cite[Theorems 1.7 and 1.8]{HLL22}). Moreover, Han, Liu and Luo show that the ACC holds for minimal log discrepancies of terminal threefolds (see \cite[Theorems 1.1 and 4.1]{HLL22}).

We now briefly explain the idea of the proofs of Theorems \ref{3CTACC} and \ref{3CT2}.
Suppose $(X,S)$ is a pair and let $\pi: \tilde{X} \to X$ be a log resolution of $(X, S)$.
We have $$ K_{\tilde{X}} \sim_{\bQ} \pi^*K_X+ \sum_i {a_i} E_i,$$ and
$$ \pi^*S\sim_{\bQ} S_{\tilde{X}} + \sum_i {m_i} E_i,$$ where $S_{\tilde{X}}$ is the proper transform of $S$ and $\sim_{\bQ}$ denotes the $\bQ$-linear equivalence. The canonical threshold, denoted $\ct(X,S)$ is defined as
$$\ct(X,S)= \sup\{ \lambda | (X, \lambda S) \text{ is canonical}\} = \min_i\{ \frac{a_i}{m_i}\}.$$
It is easy to see that the canonical threshold is independent of resolution.

By blowing up along a curve not contained in singular sets of $S$ and $X$, it is also easy to see that $\ct(X,S) \le 1$ by definition.
Let $q\colon X'\to X$ be a $\bQ$-factorialization of $X$ and $S'$ be the strict transform of $S$ on $X'$. Since $q\colon X'\to X$ is a small morphism (see \cite[Corollary 4.5]{Kaw88}), $\ct (X,S)=\ct(X',S')$. By replacing $X$ with $X'$ (resp. $S$ with $S'$), we may assume $X$ is $\bQ$-factorial in this work.
We say that a proper morphism $\sigma\colon Y\to X$ with connected fibers is a divisorial contraction if $-K_Y$ is  $\sigma$-ample and the exceptional locus of $\sigma$ is a prime divisor where both $X, Y$ are normal varieties with at worst terminal singularities. The following is a well-known fact (see e.g. \cite[Corollary 4.5]{Kaw88} and \cite{Corti}).
\begin{prop}\label{realizect} Given a pair $(X,S)$ of a 
terminal threefold $X$ and an integral and effective Weil divisor $S$.
There exists a divisorial contraction computing $\ct(X,S)$. More precisely, there exists a divisorial contraction
$\sigma: Y \to X$ with $K_Y=\sigma^*K_X + a E$, $\sigma^*S= S_Y + m E$ such that $\ct(X,S)=\frac{a}{m}$ where $S_Y$ denotes the strict transform of $S$ on $Y$.
In this case, we say that $\ct(X,S)$ is computed by  $\sigma$.
\end{prop}

Note that if $\sigma$ is a divisorial contraction to a curve, then $a=1$ and hence $\ct(X,S) \in \aleph \colon = \{ \frac{1}{n} \}_{n \ge 1}$, which is clearly an ACC set.
Suppose now that $\ct(X,S)$ is computed by  $\sigma$ for some divisorial contraction to a point $P \in X$. Let $n \in \mathbb{N}$ be the index of $P \in X$. That is, $n$ is the smallest positive integer such that $nK_X$ is Cartier at $P$. 
By abuse of notation, we write $K_{Y}= \sigma^* K_{X} +\frac{a}{n} E$ and $\sigma^* S= \tilde{S} + \frac{m}{n} E$, where $E$ is the exceptional divisor of $\sigma$ and $\tilde{S}$ is the proper transform of $S$. We call $a$ the {\it weighted discrepancy of $\sigma$} and $m$ the {\it weighted multiplicity of $S$ with respect to $\sigma$}. Now $\ct(X,S)=\frac{a}{m}$. It is then sufficient to study various constraints of weighted discrepancy $a$ and weighted multiplicities $m$.

We will use the classification of divisorial contractions to points, due to Kawamata, Kawakita, Hayakawa and some others (cf. \cite{Ka,Haya99,Haya00,Kawakita01,Kawakita02,Kawakita05}). As a consequence,  it is known that weighted discrepancy is bounded by $4$ except some series of weighted blow ups.

\begin{thm}\cite{Kawakita01,Kawakita05}\label{3-folddc}
Let $\sigma: Y \to X \ni P$ be a divisorial contraction to a closed point $P \in X$. If the weighted discrepancy $a > 4$, then one of the following holds:
\begin{enumerate}
  \item $P \in X$ is a smooth point and $\sigma: Y \to X$ is a weighted blow up;
  \item $P \in X$ is a $cA, cA/n, cD$ or $cD/2$ point and $\sigma: Y \to X$ is a weighted blow up satisfying some extra conditions.
\end{enumerate}
\end{thm}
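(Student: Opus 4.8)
Since Theorem~\ref{3-folddc} is really a reading of the classification of three-fold divisorial contractions to points due to Kawamata, Hayakawa and Kawakita, my plan is to organize that classification so that the cases with large weighted discrepancy become visible. First I would fix, in the analytic germ $P\in X$, one of Mori--Reid's normal forms --- a terminal cyclic quotient singularity, one of the Gorenstein types $cA$, $cD$, $cE$, or one of the hyperquotient types $cA/n$, $cAx/2$, $cAx/4$, $cD/2$, $cD/3$, $cE/2$ --- and record the standing facts that, $\sigma$ being a divisorial contraction, $Y$ is again $\bQ$-factorial and terminal, its exceptional locus is an irreducible divisor $E$ with $\rho(Y/X)=1$, and $-K_Y$ is $\sigma$-ample. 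In particular the singularities of $Y$ along $E$ again belong to the above list, so $Y$ carries a well-defined basket $\{\frac1{r_i}(1,-1,b_i)\}$ of terminal cyclic quotient points.

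Next I would extract numerical constraints by Riemann--Roch. Because $-K_Y$ is $\sigma$-ample, relative Kawamata--Viehweg vanishing gives $R^{>0}\sigma_*\OO_Y(-jK_Y)=0$, so Reid's orbifold Riemann--Roch for $\chi(\OO_Y(-jK_Y))$ equals, fibrewise over $P$, the corresponding quantity on $X$ plus an explicit correction depending only on $j$, on $a$, and on the basket of $Y$. Matching these identities for small $j$ ties $a$ and the index $n$ to the basket and to $(-K_Y)^3$-type relative intersection numbers, and already forces $a$ to be small unless these data are of a very restricted shape.

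The geometric input that cuts the problem down to a surface is the general elephant: a general $S_X\in|-K_X|$ through $P$ and a general $S_Y\in|-K_Y|$ have only Du Val singularities, and $\sigma|_{S_Y}\colon S_Y\to S_X$ is a partial resolution of that Du Val germ. Combining the combinatorics of this partial resolution with the normal form, the index, and the Riemann--Roch constraints above, one finds that --- apart from smooth points --- only $cA$-type germs in the broad sense ($cA$, $cA/n$) and $cD$-type germs ($cD$, $cD/2$) support extractions with $a$ unbounded, while terminal cyclic quotients, $cE$ germs, and the remaining hyperquotient germs $cAx/2$, $cAx/4$, $cD/3$, $cE/2$ support only extractions with $a\le 4$. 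This is the asserted dichotomy. In the two surviving families one then realizes $\sigma$ explicitly as a weighted blow-up: one compares $\mathrm{ord}_E$ with the monomial valuations on the ambient germ and uses $\rho(Y/X)=1$ to conclude that $E$ is determined by its center and the valuation it defines, the ``extra conditions'' in case (2) recording which weight systems and defining equations actually occur.

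The hard part is exactly this combination: ruling out $a>4$ \emph{uniformly} over every terminal type and every candidate weighting, which forces a delicate, type-by-type study of how $E$ meets the singular locus of $Y$ and of which defining equations are compatible with the prescribed weights. That case analysis is where essentially all of the work in \cite{Kawakita01,Kawakita05} (and in Hayakawa's companion papers) lies, and I would not expect to avoid it.
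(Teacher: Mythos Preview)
The paper does not prove Theorem~\ref{3-folddc} at all: it is stated as a citation of \cite{Kawakita01,Kawakita05} and used as a black box, so there is no ``paper's own proof'' to compare against. Your proposal is a reasonable high-level sketch of the strategy underlying Kawakita's classification (Riemann--Roch on $Y$ via the basket, general elephant, and identification of the valuation with a weighted blow-up), but for the purposes of this paper no argument is needed or given --- the theorem is simply quoted.
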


Note that if $\sigma$ is a divisorial contraction to a point with weighted discrepancy $\le 4$, then  $\ct(X,S) \in \aleph_4:= \{ \frac{a}{m} \}_{a \le 4, m \ge 1}$, which is clearly an ACC set. Therefore, it is sufficient to consider the set of canonical thresholds that are computed by divisorial contractions to points with weighted discrepancies $\ge 5$.
By the above classification, these divisorial contractions can be realized as weighted blow ups, hence we will work on canonical thresholds computed by weighted blow ups.

According to the types of the center $P \in X$, we introduce
 \begin{equation*}
 \mathcal{T}_{3, *}^{\textup{can}}=\left\{\ct(X,S) \Bigg| \begin{array}{l}  \ct(X,S)  \text{ is computed by } \sigma: Y \to X \ni P\\
                                      \text{with weighted discrepancy }  \ge 5 \\
                                      \text{ over a point } P \text{ of type } *  \end{array} \right\}, \eqno{\dagger_1}
 \end{equation*}
where the type $*$ could be $sm$, (resp. $cA$, $cA/n$, $cD$, $cD/2$) if $P \in X$ is a smooth point (resp. singular point of type $cA$, $cA/n$, $cD$ or $cD/2$).

The classification of divisorial contractions then implies the following decomposition of sets:
 $$\mathcal{T}_{3}^{\textup{can}} = \aleph_4 \cup \mathcal{T}_{3,sm}^{\textup{can}} \cup \mathcal{T}_{3, cA}^{\textup{can}} \cup \mathcal{T}_{3, cA/n}^{\textup{can}} \cup \mathcal{T}_{3,cD}^{\textup{can}} \cup \mathcal{T}_{3, cD/2}^{\textup{can}}. \eqno{\dagger_2}$$

For each divisorial contraction $\sigma: Y \to X \ni P$ with weighted discrepancy $ \ge 5$, not only there exists explicit description of singularities $P\in X$ (if $P$ lies in the singular locus) but also $\sigma$ is known to be a weighted blow up with certain weight $w$. Let $a,m$ be its weighted discrepancy and weighted multiplicity respectively.  Consider now another weighted blow up $\sigma': Y' \to X \ni P$ with another weight $w'$ such that the exceptional divisor $E'$ is a prime divisor. Let $a',m'$ be its weighted discrepancy and weighted multiplicity respectively. Since $E'$ corresponds to a valuation, then one has $ \frac{a'}{m'} \ge \frac{a}{m}=\ct(X,S)$.
Suppose also that $w' \succeq \mu w$ for some $\mu >0$ (where the notion $\succeq$ is defined in the last paragraph before Lemma \ref{mainlem}), then $m' \ge \mu m$. Roughly speaking, these two inequalities provide estimation of a given $\ct(X,S)$. With careful choices of weight $w'$ and studies of each divisorial contractions to points, we are able to conclude that $\mathcal{T}_{3, *}^{\textup{can}}$ satisfies the ACC and its intersection with the interval $(\frac{1}{2},1)$ is contained in $\{\frac{1}{2}+\frac{1}{n} \}_{n \ge 3}$, and hence the main theorems follow.

The article is organized as follows. In section 2, we fix some notations. We study the cases over smooth point, $cA$, $cA/2$, $cD$, $cD/2$ respectively in section 3-7 respectively. The main theorem then follows from the studies of these cases. In the last section 8, we list a few related questions that could be interesting topics for further investigation.

\noindent
{\bf Acknowledgement.}
The author was partially supported by
NCTS and MOST of Taiwan.
He expresses his gratitude to Professor Jungkai Alfred Chen for extensive help and invaluable discussion and suggestions.
He would like to thank Professor Atsushi Ito for the useful information of finiteness properties of the semi-group $\mathbb{Z}^n_{\ge 0}$ (in \cite{Khov}).
He is grateful to the anonymous referees for very helpful comments, corrections and modifications.

\section{Notations and Conventions.}
We always work over complex number $\bC$.

Let $r\ge 2$ and $n$ be positive integers. Let $\zeta$ be a primitive $n$-th root of unity and $G=\langle \zeta \rangle$ be a finite cyclic group of order $n$ acting on $\hat{\mathbb{C}}^r$ by $x_i \mapsto \zeta^{b_i}x_i$ for all $i$ where $\hat{\mathbb{C}}^r$ denotes the completion of $\bC^r$.
The resulting quotient space $\hat{\mathbb{C}}^r/G$ is usually denoted by $\hat{\mathbb{C}}^r/\frac{1}{n}(b_1,...,b_r)$.

On $\cX:=\hat{\mathbb{C}}^r/\frac{1}{n}(b_1,...,b_r)$, one can consider weighted blow ups $\sigma_w: \cY \to \cX$ with admissible weight $w= \frac{1}{n}(k_1,..., k_r)$. The weight $w$ is said to be admissible if for all $i$, $k_i>0$ and $k_i\equiv sb_i$ (mod $n$) for some integer $s$. Indeed, the construction of weighted blowup is transparent in term of toric geometry. Let $\{ e_1,\ldots e_r\}$ be the standard basis of $\mathbb{R}^r$, and $\sigma$ be the non-negative cone generated by these vectors. Let $\overline{N}$ be the free abelian group generated by $\{e_1,\ldots e_r\}$ and $\frac{1}{n}(b_1,...,b_r)$  and $\overline{M}=\textrm{Hom}_\mathbb{Z}(\overline{N}, \mathbb{Z})$ be its dual. Then $\cX=\textrm{Spec}\ \mathbb{C}[\sigma^\vee \cap \overline{M}]$. So $w$ is admissible means that $w \in \overline{M}$. The weighted blowup $\sigma_w: \cY \to \cX$ is the toric morphism obtained by subdivide the cone $\sigma$ along the ray generated by $w$. For more details, please see  \cite[Section 10]{KM92} or \cite[\textsection 3]{Haya99}.

Note that $\cY$ is covered by affine open subsets $U_1,...,U_r$ where each
\[U_i:=(\overline{x}_1,\cdots,\overline{x}_r)/\frac{1}{k_i}(-k_1,..,\stackrel{i\tiny\textup{-th}\normalsize}{n},...,-k_r),\] where $\sigma_w$ is described as
\[ \sigma_w|_{U_i}\colon U_i\ni (\overline{x}_1,\cdots,\overline{x}_r)\mapsto (\overline{x}_1\overline{x}_i^{k_1/n},...,\stackrel{i\tiny\textup{-th}\normalsize}{\overline{x}_i^{k_i/n}},...,\overline{x}_r\overline{x}_i^{k_r/n})\in \cX, \]
and the exceptional divisor $\cE$ of $\sigma_w: \cY \to \cX$  is isomorphic to the weighted projective space $\mathbb{P}(k_1, k_2,...,,k_r)$.

For any monomial $\frak{m}=x_1^{i_1}x_2^{i_2}\cdots x_r^{i_r}$, we define
the weight of $\frak{m}$ to be $$w(\frak{m})=\sum_{j=1}^r \frac{i_j k_j}{n}.$$
Let $h=\sum a_{i_{1}i_{2}\cdots i_{r}} x_1^{i_1}x_2^{i_2}\cdots x_r^{i_r}$ be a non-zero $G$-semi-invariant formal power series (or polynomial) with variables $x_1, x_2,..., x_r$. That is, there exists an integer $s$ such that $s\equiv \sum_{j=1}^r i_j k_j$ (mod $n$) for all $(i_1,i_2,...,i_r)$ with coefficient $a_{i_{1}i_2 \cdots i_{r}}\neq 0$.
We say that the monomial $\frak{m}=x_1^{i_1}x_2^{i_2}\cdots x_r^{i_r}$ appears in $h$, denoted by $\frak{m}\in h$, if the coefficient $a_{i_{1}i_2\cdots i_{r}}\neq 0$.  Then we define
$$w(h):=\min\{\ w(\frak{m}) \mid  \frak{m} \in h\}.$$ 
Let $h^w$ denote the homogeneous part with minimal weight (with respect to $w$). Then we can write $$h=h^w+ \text{terms of higher weights}.$$

The extended Newton diagram $\Gamma^+(h)$ 
is defined to be the convex hull in $\mathbb{R}^r$ of the set $$\{(i_{1}, i_2,..., i_{r})+\mathbb{R}^r_{\geq 0}\ |\ a_{i_{1} i_2 \cdots i_{r}}\neq 0\ \}.$$

It is known that a threefold terminal singularity $P \in X$ is an isolated singularity which is a $cDV$ cyclic quotient. That is,  $P\in X$ is analytically locally isomorphic to  $ (\varphi=0) \subset \hat{\mathbb{C}}^4/\frac{1}{n}(b_1,b_2,b_3,b_4)$, where $\varphi$ is a semi-invariant having isolated compound Du Val singularity at the origin and $\hat{\mathbb{C}}^4$ denotes the completion of $\mathbb{C}^4$. Detailed classification can be found in \cite{Mori85,YPG} and \cite{KM92}, for example. Recall that $n$ is the index of $P\in X$ which is the smallest positive integer $n$ with $nK_X$ Cartier.

Let $S$ be a Weil divisor on $X$, which is affine and assumed to have $\bQ$-factorial singularity. It follows from \cite[Corollary 5.2]{Kaw88} that there exists an integer $0<e_P\le n$ such that $S\sim e_pK_X$ at $P$ where $\sim$ denotes the linear equivalence (via canonical cover). As the local equation of $K_X$ is semi-invariant (see \cite[p362]{YPG} or \cite[Proposition-Definition 4-5-1]{Mat02}), $S$ is then analytically locally given by $(f=0)\subset \hat{\mathbb{C}}^4/\frac{1}{n}(b_1,b_2,b_3,b_4)$ for some semi-invariant formal power series $f$.

Fix the analytically local embedding of $X$ into $\cX$ and let $Y$ be the proper transform of $X$ in $\cY$, then by abuse of the notation, we also call the induced map $\sigma_w: Y \to X$ the weighted blow up with weight $w$. Let $\cE$ be the exceptional divisor of $\sigma_w: \cY \to \cX$, which is isomorphic to $\bP(k_1,\ldots, k_r)$. Then the exceptional set of the induced weighted blowup $\sigma_w: Y \to X$  is given by $E:=Y \cap \cE$ which is isomorphic to $(\varphi^w=0) \subset \bP(k_1,\ldots, k_r)$. Suppose that $E$ is a prime divisor, or equivalently the ideal $(\varphi^w)$ is prime in the graded ring $\bC[x_1,...,x_r]$ where the weight of $x_i$ is $a_i$ for every $i=1,...,r$, then $E$ gives rise to a valuation on $Y$.

We consider now a pair $(X,S)$ analytically locally embedded into $\cX$.  
Suppose that the exceptional set $E$ of the weighted blow up $\sigma_w: Y \to X$ is a prime divisor.
Then it is straightforward to check the following (cf. \cite[the proof of Theorem 2]{Mori85}, \cite[p373]{YPG} or \cite[\textsection 3]{Haya99}) \begin{align*}
 K_Y&= \sigma_w^* K_X + \frac{a}{n}E, \quad S_Y=\sigma_w^*S - \frac{m}{n}E,
 \end{align*} where

 \begin{equation} \frac{a}{n}= (\sum_{i=1}^4 \frac{k_i}{n}) -w(\varphi)-1, \quad \frac{m}{n}=w(f).  \end{equation}
The number $a$ (resp. $m$) is called the {\it weighted discrepancy (resp. weighted multiplicity) with respect to $w$}.

There are several cases that we need to consider analytically local embedding $(X \ni P)$ into $\hat{\bC}^5$ defined by  $(\varphi_{1}=\varphi_{2}=0) \subset \hat{\mathbb{C}}^5/\frac{1}{n}(b_1,b_2,b_3,b_4,b_5)$ (resp.  $(X \ni P) \simeq (\hat{\bC}^3\ni o)$).
In this situation, the previous discussion can be carried over naturally. In particular, the exceptional divisor $E \simeq (\varphi_1^w=\varphi_2^w=0)\subset \mathbb{P}(k_1,k_2,k_3,k_4,k_5)$  (resp.  $E \simeq \bP(k_1,k_2,k_3)$). Moreover, the weighted discrepancy $a$ and the weighted multiplicity $m$ are computed by \begin{equation}
\frac{a}{n}=\sum_{j=1}^5 \frac{k_j}{n} - w(\varphi_1)-w(\varphi_2)-1 \textup{ (resp. }\frac{a}{n}=\sum_{j=1}^3 \frac{k_j}{n}-1),\  \frac{m}{n}=w(f). \end{equation}

Recall that the canonical threshold $\ct(X,S)$ of $(X,S)$ is computed by a divisorial contraction $\sigma\colon Y\to X$ by Proposition \ref{realizect}. As we explained in the previous section, it is enough to consider the set of canonical threshold with weighted multiplicities $a \ge 5$, which are computed by weighted blowups by Theorem \ref{3-folddc}.   Therefore, we are naturally led to study various weighted blow ups.
We are particularly interested in comparing the weights computing canonical threshold and its ``approximations''.

We now compare two different weights when an analytically local embedding of $X$ into $\cX$ is fixed. Given two weights $w=\frac{1}{n}(k_1,...,k_r)$ and
$w'=\frac{1}{n}(k'_1,..,k'_r)$ where $r\ge 2$ is a positive integer, we say that
\begin{equation*} w' \succeq \mu w \text{ if } k'_i \ge \mu k_i \text{ for all } i. \end{equation*}

\begin{lem}\label{mainlem}
Given a pair $(X,S)$ such that $S$ is an effective integral divisor and the canonical threshold is computed by weighted blow up with weight $w$ under an analytically local embedding $e:X\to \cX$. Let $a, m$ be its weighted discrepancy and weighted multiplicities respectively so that $\ct(X,S)=\frac{a}{m}$.

Suppose now that there is another weight $w'$ such that the exceptional set of the weighted blow up (under the same embedding $e:X\to \cX$) is a prime divisor.
Let $a', m'$ be its weighted discrepancy and weighted multiplicities respectively. Then
\begin{equation}\label{eq1}
m' \le \lfloor \frac{a'}{a}m \rfloor \le \frac{a'}{a}m
\end{equation}

On the other hand, suppose furthermore that $w' \succeq \mu w$ for some real number $\mu$, then
\begin{equation}\label{eq2}
m' \ge \lceil \mu m \rceil \ge \mu m
\end{equation}
\end{lem}

\begin{proof} Let $\sigma_{w'}:Y' \to X$ be the weighted blowup with weight $w'$. If its exceptional set is a prime divisor, then it defines a valuation. By the definition of canonical threshold, we have $\frac{a}{m} =\ct(X,S) \le \frac{a'}{m'}$.
Therefore the first inequality holds since $m'$ is an integer.
The second inequality is straightforward.
\end{proof}

These two very elementary inequalities play pivotal roles in our arguments.
Therefore, we need to check the irreducibility of exceptional divisors, which is equivalent to the irreducibility of defining equations.

The following lemma is useful.

\begin{lem}\label{irred}
Let $f$  be a  polynomial in a graded polynomial ring $\bC[x, y, z ,u]$.
If $f$ is of one of the following form (possibly after change of coordinates), then $f$ is irreducible.
\begin{enumerate}
\item $f=xy^p+ yg_1(y,z,u)+g_0(z,u)$ for some integer $p \ge 1$ with $g_0(z,u) \ne 0$.

\item $f=x(z g_1(y,z,u)+g_0(y,u))+z^p$ for some integer $p \ge 1$ with $g_0(y,u) \ne 0$.
\end{enumerate}
\end{lem}

\begin{proof} Suppose on the contrary that $f$ is reducible. Since $f$ is linear in $x$, we may write \[ f=(h_1 x + h_0)q=qh_1x+qh_0\] for some $h_1, h_0, q \in \bC[y,z,u]$ and $q$ is non-constant.

In case (1), $y^p=qh_1$ and $yg_1(y,z,u)+g_0(z,u)=qh_0$. We may assume that $q=y^k$ for some $1 \le k \le p$ and hence $y|g_0(z,u)$, which is a contradiction.

In case (2), $z^p=qh_0$ and $zg_1(y,z,u)+g_0(y,u)=qh_1$. We may assume that $q=z^k$ for some $1 \le k \le p$ and hence $z|g_0(z,u)$, which is a contradiction.
\end{proof}

\begin{thm}\cite[Theorem 5]{Ka}\label{Kawbu}
Let $(Y,Q)$ be a germ of terminal cyclic quotient singularity $\hat{\mathbb{C}}^3/\frac{1}{n}(s,-s,1)$ with $s<n$ and $n, s$ are coprime positive integers. If $\tau\colon Z\to Y$ is a divisorial contraction with exceptional divisor $E'$ such that 
$Q\in \tau(E')$, then $\tau$ is uniquely determined by the weighted blow up with weight $\frac{1}{n}(s,n-s,1)$. In particular, $\tau(E')=Q$.
\end{thm}

The divisorial contraction $\tau:Z\to Y$ in Theorem \ref{Kawbu} is called Kawamata blow up.

\section{canonical thresholds in $\mathcal{T}_{3, sm}^{\textup{can}}$ }

First recall the following two results of Stepanov:
\begin{prop}\cite[Theorem 1.7]{Stepanov} \label{Stepanov}
The set $\mathcal{T}_{3, sm}^{\textup{can}}$ satisfies the ascending chain condition.
\end{prop}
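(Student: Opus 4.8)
The plan is to reduce the statement to a combinatorial finiteness assertion about weighted blow ups of $(\mathbb{C}^3,0)$, which one then settles using the finiteness properties of the monoid $\mathbb{Z}^N_{\ge 0}$.

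\textbf{Reduction.} By the proposition on divisorial contractions computing $\ct$ together with Theorem~\ref{3-folddc}(1), every element of $\mathcal{T}_{3,sm}^{\textup{can}}$ equals $\ct(X,S)=\frac{a}{m}$ where, in suitable local coordinates $x,y,z$, the contraction $\sigma$ is the weighted blow up of $(\mathbb{C}^3,0)$ with primitive weights $w=(a_1,a_2,a_3)$, $1\le a_1\le a_2\le a_3$, and $S=(f=0)$ with $a=a_1+a_2+a_3-1\ge 5$, $m=w(f)$, and $\frac{a}{m}\le 1$. Suppose, for contradiction, that there is a strictly increasing sequence $t_1<t_2<\cdots$ in $\mathcal{T}_{3,sm}^{\textup{can}}$, realised by data $(a_1^{(j)},a_2^{(j)},a_3^{(j)};f_j)$; its limit lies in $(t_1,1]$.

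\textbf{Step 1: coarse bounds.} I would first test against the ordinary blow up $w'=(1,1,1)$, which is an irreducible divisorial contraction with $a'=2$, $m'=\mult_0 f=:d$. By \eqref{eq1}, $\frac{a}{m}\le\frac{2}{d}$, so $d\le 2/t_1$ along the sequence and only finitely many values of $d$ occur; moreover $d\ge 2$, because $d=1$ would give $m=w(f)\le a_3<a$, contradicting $\frac{a}{m}\le 1$. Passing to a subsequence I may assume $d$ is fixed. Since the part of $f$ of degree $d$ has a monomial of $w$-weight at most $d\,a_3$, we get $m\le d\,a_3$; together with $m\ge\frac{ad}{2}$ this yields $a\le 2a_3$, i.e. $a_1+a_2\le a_3+1$. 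Running the same two estimates after lowering one coordinate of $w$ to $1$, and combining \eqref{eq1} with \eqref{eq2}, one bounds $a_1$, and then $a_2$, in terms of $d$ alone: dropping the smallest weight to $1$ lowers $m$ by a controlled factor while the discrepancy ratio can only increase, which forces the two smaller weights into a finite set.

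\textbf{Step 2: the last parameter and the ACC conclusion.} With $d,a_1,a_2$ fixed, only $a_3$ and the monomial support of $f$ remain free. For a test weight $w''=(a_1,a_2,t)$ the map $t\mapsto w''(f)$ is piecewise linear with integer slopes in $[0,d]$ (the $z$-exponents occurring in $f$), and on a linear piece $w''(f)=\alpha t+\beta$ the function $g(t):=\frac{a_1+a_2-1+t}{w''(f)}$ is monotone with the sign of $\beta-(a_1+a_2-1)\alpha$. The approximation inequalities force $t=a_3$ to minimise $g$, so $\frac{a}{m}=g(a_3)$ is the value of $g$ at a breakpoint of this envelope (or at $t\to\infty$). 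Because $\mult_0 f=d$ and $a_1,a_2$ are fixed, there are only finitely many combinatorial types of envelope, and within a type the minimum is a concrete rational expression in the (possibly unbounded) breakpoint data which one checks to be a \emph{decreasing} function of the unbounded parameter — as in the toy case $g(t)=\frac{a_1+a_2-1+t}{\min(\alpha t,\beta)}$, whose minimum $\frac{1}{\alpha}+\frac{a_1+a_2-1}{\beta}$ decreases in $\beta$. Hence along the subsequence the $t_j$ fall into a finite union of families, each order-isomorphic to a decreasing sequence of rationals and therefore an ACC set; a finite union of ACC sets is ACC, so $\{t_j\}$ has no infinite strictly increasing subsequence, a contradiction. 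Equivalently, the admissible tuples (weights together with the relevant Newton data of $f$) are cut out of $\mathbb{Z}^N_{\ge 0}$ by finitely many linear conditions, and the finiteness properties of $\mathbb{Z}^N_{\ge 0}$ give ACC for $\{\frac{a}{m}\}$ directly.

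\textbf{Main obstacle.} The delicate point is Step~1: controlling the two smaller weights while $a_3\to\infty$, since a priori $w(f)$ could lie far below $d\,a_3$ and drive $\frac{a}{m}$ arbitrarily close to $1$. Ruling this out uses essentially that $\sigma_w$ is a genuine divisorial contraction that \emph{computes} $\ct$ — not merely some weighted blow up — so its weighted leading form $f^{w}$ must satisfy the irreducibility and non-degeneracy conditions in the classification; converting these into a uniform lower bound for $w(f)$ and bookkeeping the ensuing cases on $f^{w}$ (the rank of the quadratic part when $d=2$, the shape of the cubic part when $d=3$, and so on) is the technical core, which is carried out in \cite{Stepanov}.
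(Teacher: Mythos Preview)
The paper does not prove this proposition; it is quoted from \cite{Stepanov}. From the way the paper generalises Stepanov's argument to the singular cases (see Assumption~A, Proposition~\ref{assum}, and the repeated appeals to \cite[Lemma~2.5]{Stepanov}), one can read off what Stepanov actually does: given an infinite increasing sequence $\ct_k$ realised by weighted blow ups $w_k=(1,a_k,b_k)$ of $(\bC^3,0)$ and polynomials $f_k$, one uses Dickson's lemma on the supports to find $i<j$ with $\Gamma^+(f_i)\supseteq\Gamma^+(f_j)$; since any weighted blow up of $\bC^3$ has irreducible exceptional divisor $\bP(w)$, applying $w_i$ as a test weight to $f_j$ gives $\ct_i=\tfrac{a_i+b_i}{w_i(f_i)}\ge\tfrac{a_i+b_i}{w_i(f_j)}\ge\ct_j$, a contradiction. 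No bounding of individual weights is needed; the whole argument is the Newton--polytope comparison plus the well--quasi--ordering of $\bZ_{\ge0}^N$. Your final sentence gestures at exactly this, but you present it as ``equivalent'' to your structural analysis, which it is not: it is a different and much shorter route.

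Your direct approach has a genuine gap in Step~1. First, Kawakita's classification already gives $a_1=1$, which you never use; this is not fatal but it obscures the picture. More seriously, the sentence ``dropping the smallest weight to $1$ lowers $m$ by a controlled factor while the discrepancy ratio can only increase, which forces the two smaller weights into a finite set'' does not work as written. Lowering a coordinate of $w$ produces $w'\preceq w$, hence $m'\le m$; inequality~\eqref{eq1} then only gives $m'\le\frac{a'}{a}m$, i.e.\ an \emph{upper} bound on $m'$, while you need a \emph{lower} bound on $m'$ to squeeze out a constraint on $a_1,a_2$. Inequality~\eqref{eq2} goes the wrong way here because $\mu\le1$. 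The correct way to bound the smaller weight (with $a_1=1$) is the inequality $m\ge a_2a_3$ of Proposition~\ref{sm_ineq}, whose proof uses two auxiliary weights $w_1,w_2$ with $w_1+w_2=w$ chosen via B\'ezout coefficients --- a genuinely different mechanism from what you sketch. Step~2 is also incomplete: ``finitely many combinatorial types of envelope'' is not justified (the $z$--exponents in $f$ are not bounded by $d$), and the asserted monotonicity of the minimum in the ``unbounded parameter'' is only checked in a toy case. Since you ultimately defer the technical core back to \cite{Stepanov}, what you have written is an outline rather than a proof, and the outline does not match the actual argument.
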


\begin{prop}\cite[Theorem 3.6]{Stepanov} \label{Brieskorn}
Let $S \subset \hat{\mathbb{C}}^3$ be a Brieskorn singularity of the form $x^a+y^b+z^c=0$ with $2 \le a \le b \le c$. Then $ \ct(\hat{\mathbb{C}}^3, S) = \frac{1}{a}+\frac{1}{b}$ if $ c \ge \textup{lcm}(a, b)$.
\end{prop}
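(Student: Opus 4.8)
The plan is to prove the two inequalities $\ct(\bC^3,S)\le \frac1a+\frac1b$ and $\ct(\bC^3,S)\ge \frac1a+\frac1b$ separately: the first by an explicit weighted blow up, the second by a toric log resolution together with an elementary numerical estimate. For the upper bound, set $\ell=\textup{lcm}(a,b)$ and let $\sigma_w\colon Y\to\bC^3$ be the weighted blow up at the origin with weights $w=(\ell/a,\ell/b,1)$; since $\ell/a,\ell/b\in\bZ_{>0}$ and $\gcd(\ell/a,\ell/b,1)=1$, the exceptional divisor $E\cong\PP(\ell/a,\ell/b,1)$ is irreducible and hence defines a divisorial valuation. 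With $f=x^a+y^b+z^c$, the formulas of Section~2 give weighted discrepancy $a_E$ and weighted multiplicity $m_E$ equal to
\[
a_E=\Big(\tfrac{\ell}{a}+\tfrac{\ell}{b}+1\Big)-1=\tfrac{\ell}{a}+\tfrac{\ell}{b},\qquad m_E=w(f)=\min\Big(a\cdot\tfrac{\ell}{a},\ b\cdot\tfrac{\ell}{b},\ c\Big)=\min(\ell,\ell,c)=\ell,
\]
where the last equality is precisely where the hypothesis $c\ge\ell$ enters: it forces $x^a$ and $y^b$, not $z^c$, to realize $w(f)$. Since any divisorial valuation bounds $\ct$ from above, $\ct(\bC^3,S)\le a_E/m_E=\frac1a+\frac1b$.

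For the lower bound I would show that every divisorial valuation over $\bC^3$ yields ratio at least $\frac1a+\frac1b$. The polynomial $f=x^a+y^b+z^c$ is nondegenerate with respect to its Newton polyhedron $\Gamma^+(f)$, since every partial derivative of every face polynomial is a monomial and so cannot vanish on the torus; hence, by the standard toroidal resolution of Newton-nondegenerate hypersurface singularities, one may choose a smooth fan $\Sigma$ refining the normal fan of $\Gamma^+(f)$ so that the induced toric morphism $\pi\colon X_\Sigma\to\bC^3$ is a log resolution of $(\bC^3,S)$. Because $\ct$ equals the minimum of $a_i/m_i$ over the exceptional divisors of \emph{any} one log resolution, and every exceptional divisor of $\pi$ is a monomial valuation $v_\beta$ attached to a primitive $\beta\in\bZ^3_{\ge 0}$ (those $\beta$ with a vanishing coordinate satisfy $m=v_\beta(f)=0$ and are irrelevant), it suffices to establish
\[
\beta_1+\beta_2+\beta_3-1\ \ge\ \Big(\tfrac1a+\tfrac1b\Big)\min(a\beta_1,\ b\beta_2,\ c\beta_3)\qquad\text{for every }\beta\in\bZ^3_{>0}.
\]
This is immediate: discarding the $c\beta_3$ term and assuming, say, $a\beta_1\le b\beta_2$, the right-hand side equals $\beta_1+\tfrac ab\beta_1\le\beta_1+\beta_2\le\beta_1+\beta_2+\beta_3-1$, the last step using $\beta_3\ge1$. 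Thus $\ct(\bC^3,S)\ge\frac1a+\frac1b$, and together with the upper bound the proposition follows. (One can even arrange $\Sigma$ to contain the ray spanned by $(\ell/a,\ell/b,1)$, so that equality in the displayed estimate is attained on $X_\Sigma$ and both bounds are read off from $X_\Sigma$ alone.)

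The step I expect to be the main obstacle is precisely this reduction to monomial valuations, that is, the existence of a \emph{toric} log resolution of the non-toric pair $(\bC^3,S)$; it relies on the nondegeneracy of the Brieskorn polynomial together with toroidal resolution of singularities in the style of Varchenko and Khovanskii. Once that is in hand, the discrepancy and multiplicity bookkeeping and the final inequality are routine. A minor point to treat with care is the boundary case $a=b=2$, where $\frac1a+\frac1b=1$ and the statement amounts to $(\bC^3,S)$ being canonical with $S$ a $cA_{c-1}$ point; this again follows from the inequality above, which is in fact already an equality at $\beta=(1,1,1)$, corresponding to the ordinary blow up of the origin.
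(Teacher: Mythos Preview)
The paper does not supply its own proof of this proposition; it is simply quoted from \cite{Stepanov}. So there is nothing in the present paper to compare your argument against line by line.

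That said, your argument is correct and is in the same spirit as the Newton--polytope techniques the paper borrows from Stepanov elsewhere (e.g.\ the use of $\Gamma^+(f)$ and \cite[Lemma~2.5,~2.6]{Stepanov}). The upper bound via the weighted blow up with weights $(\ell/a,\ell/b,1)$ is exactly the natural choice, and your discrepancy/multiplicity computation is right; the hypothesis $c\ge\ell$ is used precisely where you indicate. For the lower bound, the reduction to monomial valuations is the standard move: the Brieskorn polynomial is Newton nondegenerate (each face polynomial has the form $x^a$, $x^a+y^b$, or $x^a+y^b+z^c$, whose partials are monomials and cannot vanish on the torus), so a toric refinement of the dual fan of $\Gamma^+(f)$ gives a log resolution whose exceptional divisors are all monomial. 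Your elementary inequality
\[
\beta_1+\beta_2+\beta_3-1\ \ge\ \Big(\tfrac1a+\tfrac1b\Big)\min(a\beta_1,b\beta_2)\ \ge\ \Big(\tfrac1a+\tfrac1b\Big)\min(a\beta_1,b\beta_2,c\beta_3)
\]
then finishes the job cleanly. The only point you flag as delicate---the existence of a toric log resolution for the non-toric pair $(\bC^3,S)$---is indeed the substantive input, but it is a well-known consequence of nondegeneracy (Varchenko, Khovanskii), and Stepanov's treatment relies on the same circle of ideas.
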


Suppose that $\ct(X,S) \in \mathcal{T}_{3,sm}^{\textup{can}}$. Kawakita \cite{Kawakita01} shows that the canonical threshold $\ct(X,S)$ is computed by $\sigma_w: Y \to X \ni P$ which is a weighted blow up over a smooth point $P$ of weight $w=(1,\alpha,\beta)$, where $\alpha, \beta$ are relatively prime integers and $1\le \alpha<\beta$. The weighted discrepancy is $\alpha+\beta$.
Let $m$ be the weighted multiplicity of $(X,S)$ with respect to $\sigma_w$, then we have $\ct(X,S)=\frac{\alpha+\beta}{m}$.

\begin{prop}\label{sm_ineq}
Keep the notations as above and suppose furthermore that $\alpha+\beta\nmid m$ and $\alpha>1$. The following statements hold.
\begin{enumerate}
\item We have $m \ge \alpha\beta$ and $\ct(X,S)=\frac{\alpha+\beta}{m}\leq \frac{1}{\alpha}+\frac{1}{\beta}$.
\item If $m=\alpha\beta$, then ${\ct(X,S)}=\frac{1}{\alpha}+\frac{1}{\beta}$.
\item  $m \ne \alpha\beta+1$.
\end{enumerate}
\end{prop}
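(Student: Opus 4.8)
\medskip
\noindent\textbf{Proof proposal.}
Write $d=a+b$ for the weighted discrepancy, so that $\ct(X,S)=d/m$, and fix a local equation $f\in\bC[[x,y,z]]$ of $S$ at $P$ with $w(f)=m$ for $w=(1,a,b)$. All three assertions involve only $m$: (1) is equivalent to $m\ge ab$, (2) to the identity $d/(ab)=\tfrac1a+\tfrac1b$, and (3) to $m\ne ab+1$. Since $\ct\le1$ we have $m\ge d$, and since $d\nmid m$ in fact $m\ge d+1=a+b+1$; consequently $x,y,z,xy,xz,yz$ all have $w$-weight $<m$, so none of them occurs in $f$. If $a=1$ then $m\ge b+2>b=ab$ while $m\ne ab=b$ and $m\ne ab+1=b+1$, so everything is immediate; hence assume $a\ge2$. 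Among the six quadratic monomials only $z^{2}$ can have $w$-weight $\ge m$, so $\mult_{0}f\ge2$, with equality forcing $z^{2}\in f$. The basic device is to feed auxiliary weighted blow-ups (with irreducible exceptional divisor) into \eqref{eq1} and \eqref{eq2}; the cheapest, the ordinary blow-up $w'=(1,1,1)$ of weighted discrepancy $2$ and weighted multiplicity $\mult_{0}f$, gives $\mult_{0}f\le 2m/(a+b)$.

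\emph{Part (1).} Assume $m\le ab-1$; I would split on $\mult_{0}f$. If $\mult_{0}f=2$, then $z^{2}\in f$ and $f(0,0,z)=cz^{2}+\cdots$ with $c\ne0$; by Weierstrass preparation and completing the square $S$ becomes analytically a suspension $\{z^{2}+g(x,y)=0\}$ with $\mathrm{ord}_{0}g\ge2$, which reduces the computation of $\ct(X,S)$ to a plane curve singularity, where Propositions~\ref{Stepanov} and \ref{Brieskorn} apply and yield $\ct(X,S)\le\tfrac1a+\tfrac1b$, i.e.\ $m\ge ab$. If $\mult_{0}f\ge3$, the plan is to use that $f^{w}$ is then supported on monomials of degree $\ge3$ lying in the slab $\{i+aj+bk=m\le ab-1\}$: combined with the estimate above and an auxiliary weight $w'=(1,1,c)$ for a suitable $c$ — whose weighted multiplicity $\min_{\mathrm{supp}(f)}(i+j+ck)$ is bounded below once the minimal-weight monomials are pinned down — this forces $m\ge ab$. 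Part (2) is then immediate: $m=ab$ gives $\ct(X,S)=d/(ab)=\tfrac1a+\tfrac1b$.

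\emph{Part (3).} Suppose $m=ab+1$. Every monomial $x^{i}y^{j}z^{k}$ of $f$ then has $w$-weight $\ge ab+1>ab$; as that weight is $\le b(i+j+k)$, we get $i+j+k\ge a+1$, hence $\mult_{0}f\ge a+1$. Together with $\mult_{0}f\le 2(ab+1)/(a+b)$ this is already contradictory when $b=a+1$, and for $b=a+2$ it forces $\mult_{0}f=a+1$. For larger $b$ I would use the family $w'=(1,1,c)$, $c\ge2$: when $b\ge ac$, every monomial of $f$ has $w'$-weight $\ge ac+1$, so \eqref{eq1} forces $b\ge c(a+1)+1$, ruling out $m=ab+1$ whenever $b$ lies in a window $\{ca+1,\dots,ca+c\}$; the finitely many residual values of $b$ I would treat by listing the admissible degree-$(a+1)$ monomials of $f^{w}$ (e.g.\ $xz^{a},yz^{a},z^{a+1}$) and reducing once more, via Weierstrass preparation, to a lower-degree hypersurface where Proposition~\ref{Brieskorn} applies.

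The routine parts are the opening reductions and Part (2). The genuine obstacle is that \eqref{eq1}--\eqref{eq2} alone never produce a contradiction (assigning the minimal weight $1$ to $x$ is always optimal and always consistent), so the leverage must come from coupling them with reducedness of $S$ — which constrains $f^{w}$ and $\mult_{0}f$ — and with the Weierstrass reduction to suspensions, where Stepanov's results bite. Setting up the case analysis so that, for every $(a,b)$, one of these closes the gap — in the high-multiplicity branch of Part (1) and the residual-$b$ branch of Part (3) — is where the effort concentrates.
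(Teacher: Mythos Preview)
Your proposal has a genuine gap, and in fact your diagnosis of the difficulty is mistaken. You assert that ``\eqref{eq1}--\eqref{eq2} alone never produce a contradiction'' and that one must bring in reducedness of $S$, Weierstrass preparation, and Stepanov's Brieskorn computation. The paper's proof shows the opposite: inequalities \eqref{eq1}--\eqref{eq2} alone suffice, with no input from the geometry of $S$ beyond the definition of $\ct$. The idea you are missing is to use a \emph{pair} of auxiliary weights whose discrepancies sum to $a+b$. Since $\gcd(a,b)=1$, choose $s,t>0$ with $at=bs+1$, set $\bar s=a-s$, $\bar t=b-t$ (so $a\bar t=b\bar s-1$), and take $w_1=(1,s,t)$, $w_2=(1,\bar s,\bar t)$. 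Then $w_1\succeq\frac{s}{a}w$, $w_2\succeq\frac{\bar t}{b}w$, the discrepancies are $s+t$ and $\bar s+\bar t$ with sum $a+b$, and $\frac{s}{a}+\frac{\bar t}{b}=1-\frac{1}{ab}$. Summing \eqref{eq1} over the pair gives $\lfloor\frac{s+t}{a+b}m\rfloor+\lfloor\frac{\bar s+\bar t}{a+b}m\rfloor$, which equals $m-1$ whenever $(a+b)\nmid m$ (use $\gcd(s+t,a+b)=1$); summing \eqref{eq2} gives at least $\lceil m-\frac{m}{ab}\rceil$, which equals $m$ when $m<ab$ and equals $ab+1$ when $m=ab+1$. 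This is already a contradiction, proving (1) and (3) simultaneously; (2) is trivial.

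Concretely, your argument breaks down in the high-multiplicity branch of Part (1), where you only say ``the plan is to use \ldots\ this forces $m\ge ab$'' without an actual argument, and in Part (3), where you leave ``finitely many residual values of $b$'' to an unspecified case analysis. The $\mult_0 f=2$ branch of Part (1) is also unjustified: citing Propositions~\ref{Stepanov} and~\ref{Brieskorn} does not give $\ct\le\frac1a+\frac1b$ for a general suspension $z^2+g(x,y)$, since \ref{Stepanov} is only an ACC statement and \ref{Brieskorn} treats only diagonal $g$. The single-weight tests $(1,1,c)$ you propose are genuinely too weak here; the leverage comes from the \emph{complementary} pair of weights and the resulting floor/ceiling bookkeeping, not from structural facts about $f$.
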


\begin{proof}
Note that there are integers $s,t$ with $0<s\le \alpha$ and $0<t\le \beta$ and
$$ \alpha t=\beta s+1.$$
Let $\bar{s}:=\alpha-s$ and $\bar{t}:=\beta-t$. Then we also have
$$ \alpha \bar{t}=\beta \bar{s}-1.$$
Recall that $Y$ is covered by three affine open subsets $U_1, U_2, U_3$ where $U_1\simeq \hat{\mathbb{C}}^3$ and
 \begin{align*}
U_2&\simeq \hat{\mathbb{C}}^3/\frac{1}{\alpha}(-1,1, -\beta)\simeq \hat{\mathbb{C}}^3/\frac{1}{\alpha}(-s,s, 1) \textup{ and } \\
U_3&\simeq \hat{\mathbb{C}}^3/\frac{1}{\beta}(-1,-\alpha, 1)\simeq \hat{\mathbb{C}}^3/\frac{1}{\beta}(t,1, -t).
\end{align*}
Define $$v_2=\frac{1}{\alpha}(\bar{s}, s, 1),\ v_3:=\frac{1}{\beta}(t,1, \bar{t}),\ w_2=\frac{s}{\alpha}w+\frac{1}{\alpha}(\bar{s},0,1), w_3= \frac{\bar{t}}{\beta}w+\frac{1}{\beta}(t,1,0).$$
Then $w_2=(1,s,t)$, $w_3=(1, \bar{s}, \bar{t})$. Let $m_2,m_3$ be their weighted multiplicities respectively.
Recall that the further weighted blow up at the origin of $U_2$ (resp. $U_3$) with weight $v_2$ (resp. $v_3$) is Kawamata blow up. 
Note that the weighted discrepancies of $\sigma_{w_i}$ are $s+t$ and $\bar{s}+\bar{t}$ respectively. 
Notice also that
$w_2 \succeq \frac{s}{\alpha} w$ and $w_3 \succeq  \frac{\bar{t}}{\beta} w$. Now it follows from the inequalities in Lemma \ref{mainlem} that $$\lfloor  \frac{s+t}{\alpha+\beta} m \rfloor + \lfloor  \frac{\bar{s}+\bar{t}}{\alpha+\beta} m \rfloor \ge m_2+m_3 \ge   \lceil \frac{s}{\alpha} m \rceil + \lceil \frac{\bar{t}}{\beta} m \rceil.$$

First suppose that $m < \alpha\beta$.
Since $$\frac{s}{\alpha}+\frac{\bar{t}}{\beta} = \frac{s}{\alpha} + \frac{\bar{s}}{\alpha} -\frac{1}{\alpha\beta}=1-\frac{1}{\alpha\beta},$$
it follows that
$$ \lceil \frac{s}{\alpha} m \rceil +\lceil \frac{\bar{t}}{\beta} m \rceil \ge  \lceil \frac{s}{\alpha} m+\frac{\bar{t}}{\beta} m \rceil = \lceil m - \frac{m}{\alpha\beta} \rceil =m. $$
On the other hand, since $\frac{s+t}{\alpha+\beta} m + \frac{\bar{s}+\bar{t}}{\alpha+\beta} m =m$,
one has that
$$ \lfloor \frac{s+t}{\alpha+\beta} m \rfloor + \lfloor \frac{\bar{s}+\bar{t}}{\alpha+\beta} m \rfloor= \left\{ \begin{array}{ll} m-1 & \text{ if } \frac{s+t}{\alpha+\beta} m \not \in \bZ; \\ m & \text{ if } \frac{s+t}{\alpha+\beta} m  \in \bZ. \end{array} \right.
$$

Notice that $\gcd(\alpha+\beta, s+t)=1$ are relatively prime. Hence $\frac{s+t}{\alpha+\beta} m  \in \bZ$ if and only if $\alpha+\beta |m$. We thus conclude the statement (1).

It remains to show (3).
If $m=\alpha\beta+1$, one can easily compute that
$$\lceil \frac{s}{\alpha} m \rceil +\lceil \frac{\bar{t}}{\beta} m \rceil =\lceil \frac{s}{\alpha} (\alpha\beta+1) \rceil +\lceil \frac{\bar{t}}{\beta} (\alpha\beta+1) \rceil= \alpha\beta+1.$$
Since $\alpha+\beta\nmid m$, then $\frac{s+t}{\alpha+\beta} m  \not \in \bZ$, which leads to a contradiction.
\end{proof}

\begin{prop}\label{sm2}
We have $\mathcal{T}_{3,sm}^{\textup{can}} \cap (\frac{1}{2},1) = \{\frac{1}{2}+\frac{1}{t}\}_{t \ge 3}$.
\end{prop}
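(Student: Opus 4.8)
The plan is to establish the two inclusions of the stated equality separately. The inclusion ``$\supseteq$'' needs nothing new: given an integer $t\ge 3$, pick $c\ge\operatorname{lcm}(2,t)$ and set $S=(x^{2}+y^{t}+z^{c}=0)\subset\bC^{3}$; Proposition~\ref{Brieskorn} gives $\ct(\bC^{3},S)=\tfrac12+\tfrac1t$, and $\tfrac12+\tfrac1t\in(\tfrac12,1)$ because $t\ge 3$. Thus $\{\tfrac12+\tfrac1t\}_{t\ge 3}\subseteq\mathcal{T}_{3,sm}^{\textup{can}}\cap(\tfrac12,1)$, and, as noted right after Proposition~\ref{Brieskorn}, it remains to prove the reverse inclusion.

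For ``$\subseteq$'', let $\ct\in\mathcal{T}_{3,sm}^{\textup{can}}\cap(\tfrac12,1)$. As recalled above it is computed by a weighted blow up of weights $w=(1,a,b)$ with $(a,b)=1$, $1\le a\le b$, so $\ct=\frac{a+b}{m}$ for the weighted multiplicity $m$, and $\tfrac12<\frac{a+b}{m}<1$ forces $a+b<m<2(a+b)$. If $(a+b)\mid m$ then $\ct$ is the reciprocal of a positive integer, hence not in $(\tfrac12,1)$; so from now on $(a+b)\nmid m$. If $a\ge 2$, I would apply Proposition~\ref{sm_ineq}, which yields $ab\le m$ and hence $ab<2(a+b)$, i.e. $(a-2)(b-2)<4$; together with $2\le a<b$ and $(a,b)=1$ this confines $(a,b)$ to the short list $\{(2,b):b\ge 3\text{ odd}\}\cup\{(3,4),(3,5)\}$. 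In each case the constraints $ab\le m<2(a+b)$, $m\ne ab+1$ and $(a+b)\nmid m$ leave only finitely many $m$, and a routine evaluation of $\frac{a+b}{m}-\tfrac12=\frac{2(a+b)-m}{2m}$ shows every threshold so obtained has the form $\tfrac12+\tfrac1t$ with $t\ge 3$ (for $a=2$: $m\in\{2b,2b+2,2b+3\}$, giving $t\in\{b,2b+2,4b+6\}$; for $(3,4)$: $m=12$, $\ct=\tfrac7{12}$; for $(3,5)$: $m=15$, $\ct=\tfrac8{15}$).

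The remaining case $a=1$ — with $w=(1,1,b)$, $\ct=\frac{b+1}{m}$ and $b+2\le m\le 2b+1$ — is where I expect the real difficulty, because Proposition~\ref{sm_ineq} genuinely breaks there: there are no positive integers $s<a$, $t<b$ with $at=bs+1$. The device I would use instead is a comparison of $w$ with the auxiliary weight $w'=(1,1,b-1)$ (available once $b\ge 3$), which is again a weighted blow up over the same smooth point with irreducible exceptional divisor and weighted discrepancy $b$. Writing the leading part as $f^{w}=P_{m}(x,y)+zP_{m-b}(x,y)+z^{2}P_{m-2b}(x,y)$ with $P_{d}$ homogeneous of degree $d$, one computes the weighted multiplicity $m'$ of $w'$ to be $m-1$ when $m\le 2b-1$ and $m-2$ when $m\in\{2b,2b+1\}$; the step requiring care is that $m'=w'(f^{w})$, i.e. that no monomial of $f$ above the leading part lowers the $w'$-weight, which should follow from $b\ge 3$ and $m\le 2b+1$ (such a monomial must carry $z^{k}$ with $k\ge 3$, hence have $w$-weight $\ge 3b>m$ and $w'$-weight $\ge 3(b-1)\ge m-1$). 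Granting this, Inequality~(\ref{eq1}) for $w'$ reads $m'\le\lfloor\frac{b}{b+1}m\rfloor$, which for $m\le 2b-1$ equals $m-2$ and contradicts $m'=m-1$. Hence $m\in\{2b,2b+1\}$, so $\ct=\frac{b+1}{2b}=\tfrac12+\tfrac1{2b}$ or $\ct=\frac{b+1}{2b+1}=\tfrac12+\tfrac1{4b+2}$. Finally the small cases $b\le 2$ (including the ordinary blow up $a=b=1$, where $\ct=2/\mult_{P}S$) are checked directly and produce only $\tfrac23,\tfrac34,\tfrac35$. Assembling everything gives $\ct\in\{\tfrac12+\tfrac1t\}_{t\ge 3}$, completing the proof; the one genuinely delicate ingredient is the $(1,1,b)$ analysis, where the symmetric two-weight estimate of Proposition~\ref{sm_ineq} must be replaced by a single well-chosen comparison weight together with a careful accounting of which monomials of $f$ contribute to the auxiliary weighted multiplicity.
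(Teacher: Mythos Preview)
Your proof is correct and follows essentially the same route as the paper: Brieskorn examples for ``$\supseteq$'', Proposition~\ref{sm_ineq} to force $a\le 3$, identical treatment of $a=2,3$, and for $a=1$ the same auxiliary weight $w'=(1,1,b-1)$ together with the upper bound $m'\le\lfloor\tfrac{b}{b+1}m\rfloor=m-2$ from Inequality~(\ref{eq1}). The only difference is in the matching lower bound: you obtain $m'\ge m-1$ by a direct monomial-by-monomial accounting of $f$, whereas the paper simply observes $w'\succeq\tfrac{b-1}{b}\,w$ and applies Inequality~(\ref{eq2}) to get $m'\ge\lceil\tfrac{b-1}{b}m\rceil=m-1$ for $b<m<2b$; your computation is valid (with the small caveat that it actually yields $m'\ge m-1$ rather than $m'=m-1$, since $f^w$ need not contain a $z$-term) but amounts to reproving~(\ref{eq2}) by hand in this special case, and your separate ``small cases $b\le 2$'' are unnecessary since $\mathcal{T}_{3,sm}^{\textup{can}}$ already imposes weighted discrepancy $a+b\ge 5$.
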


\begin{proof}
For every integer $t>2$,
we consider the Brieskorn singularity $x^2+y^t+z^c=0$ where $c$ is an integer $\ge \textup{lcm}(2, t)$. It has canonical threshold $\frac{1}{2}+\frac{1}{t}$ by Proposition \ref{Brieskorn}.

On the other hand, suppose that $\ct(X,S)\in \mathcal{T}_{3,sm}^{\textup{can}} \cap (\frac{1}{2},1)$ is computed by a weighted blow up with weight $(1,\alpha,\beta)$ with  $1 \le \alpha <\beta $. Then by Proposition \ref{sm_ineq}$, \frac{1}{2} < \ct(X,S) \le \frac{1}{\alpha}+\frac{1}{\beta}$ if $\alpha>1$. It follows that  $\alpha \le 3$.

Suppose first that $\alpha=3$, then $\beta=4,5$. Hence $\ct(X,S)$ could possibly only be $\frac{7}{12}=\frac{1}{2}+\frac{1}{12}$ or $\frac{8}{15}=\frac{1}{2}+\frac{1}{30}$.

Suppose now that $\alpha=2$. Since $\frac{2+\beta}{m}= \ct(X,S) > \frac{1}{2}$, so we have $m=2\beta, 2\beta+2$ or $2\beta+3$ but not $2\beta+1$ by Proposition \ref{sm_ineq}. Note that
$$\frac{\beta+2}{2\beta}=\frac{1}{2}+\frac{1}{\beta}, \quad \frac{\beta+2}{2\beta+2}=\frac{1}{2}+\frac{1}{2\beta+2}, \quad
\frac{\beta+2}{2\beta+3}=\frac{1}{2}+\frac{1}{4\beta+6}.$$

Finally, let us assume that $\alpha=1$. We would like to compare the weight $(1,1,\beta)$ with the weight $w_1:=(1,1,\beta-1)$. 
By the inequalities in Lemma \ref{mainlem}, we have
$$ \lfloor \frac{\beta}{\beta+1} m \rfloor  \ge \lceil \frac{\beta-1}{\beta} m \rceil.\eqno{\dagger_3} $$

Since $ \frac{1}{2} < \frac{\beta+1}{m} < 1$, we have $\lfloor \frac{\beta}{\beta+1} m \rfloor=m-2$. However, if $m <2\beta$, then $\lceil \frac{\beta-1}{\beta} m \rceil = m-1$, which is a contradiction to $\dagger_3$. Therefore, we have $m=2\beta$ or $2\beta+1$. So now
$$\ct(X,S)= \frac{\beta+1}{2\beta}=\frac{1}{2}+\frac{1}{2\beta},\mbox{ or } \ct(X,S)= \frac{\beta+1}{2\beta+1}=\frac{1}{2}+\frac{1}{4\beta+2}.$$
Therefore, we conclude that
$\mathcal{T}_{3,sm}^{\textup{can}} \cap (\frac{1}{2},1) \subseteq \{\frac{1}{2}+\frac{1}{t}\}_{t \ge 3}$. This completes the proof.
\end{proof}

\section{canonical thresholds in $\mathcal{T}_{3, cA}^{\textup{can}}$}
In this section, we consider the canonical thresholds over $cA$ points. In viewing $\dagger_1$ and $\dagger_2$,  it is sufficient to consider divisorial contractions which are weighted blowups with $a \ge 5$. Hence we will assume that $a \ge 5$ in this section.

Let  $\ct(X,S) \in \mathcal{T}_{3, cA}^{\textup{can}}$ be a canonical threshold realized by a divisorial contraction $\sigma: Y \to X$. Theorem 1.2(i) in \cite{Kawakita05} shows that there exists an analytical identification $P\in X \simeq o\in (\varphi=xy+g(z,u)=0)$ in $\hat{\mathbb{C}}^4$ where $o$ denotes the origin of $\hat{\mathbb{C}}^4$ and $\sigma$ is a weighted blow up of weight $w=wt(x,y,z,u)=(r_{1},r_{2},a,1)$ satisfying the following:

\begin{itemize}
\item $w(g(z,u))=r_{1}+r_{2}=ad$ where $r_1,r_2,a,d$ are positive integers;
\item $z^{d}\in g(z,u)$ and hence $w(g(z,u))=w(z^{d})$;
\item $\gcd(r_{1}, a)=\gcd(r_{2}, a)=1$.
\end{itemize}

We may assume $d\ge 2$ since otherwise $P\in X$ is nonsingular and is treated in the previous section.
Suppose that $S$ is defined by the formal power series $f=0$ analytically and locally, then $\ct(X,S)=\frac{a}{m}$, where $m=w(f)$.

\begin{lem} \label{lower}
Keep the notation as above. Suppose that there is another weight $w'=(r'_1, r'_2, a',1)$ satisfying $r'_1+r'_2=a'd$ and $a' \leq a$. Let $\sigma': Y' \to X$ be the weighted blow up with weight $w'$. Then the exceptional set $E'$ of $\sigma'$ is a prime divisor.
\end{lem}

\begin{proof}
It is convenient to consider {\it truncated weight} $v=(a,1)$ and $v'=(a',1)$ on $\{z, u \}$. Note that $w(\varphi)=v(g)=ad$.
Since $v' \succeq \frac{a'}{a} v$, one has
$$v'(g) \ge \frac{a'}{a} v(g) = a'd,$$ and hence $w'(\varphi)=a'd$.

Suppose that $a' \le a$. One has \[ \varphi^{w'}=xy+g^{w'}(z,u),\]
where $g^{w'}(z,u)$ is the $w'$-weighted homogeneous part of $g(z,u)$. As $z^d \in g^{w'}(z,u)$, one has $g^{w'}(z,u) \ne 0$ and hence $\varphi^{w'}$ is irreducible by Lemma \ref{irred}.(1). The exceptional set $E'$ of $\sigma'$, which is a hypersurface in $\bP(r_1', r_2', a',1)$ defined by $\varphi^{w'}$, is thus irreducible.
\end{proof}

By interchanging $x$ and $y$ if necessary, we may assume $r_1\leq r_2$ and hence $r_2 >1$. We will employ the method developed in \cite{CJK15} to produce two weights $w_1, w_2$, complementary to each other in some sense,  approximate the weight computing canonical threshold.
These two weights, together with the inequalities in Lemma \ref{mainlem}, provide very tight constraints to the canonical threshold which allows us to determine them completely in the interval $(\frac{1}{2},1)$.

\begin{prop} \label{cAub}
Suppose that $\ct(X,S)=\frac{a}{m}$ is realized over a $cA$ point $P \in X$. Suppose furthermore that $a\nmid m$ and min$\{r_1,r_2\}=r_1>1$.
Then $m \ge \frac{r_1 r_2}{d}$ and hence $\frac{a}{m} \le \frac{1}{r_1}+ \frac{1}{r_2}$.
\end{prop}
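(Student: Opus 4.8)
The plan is to run the same floor/ceiling squeeze that proves Proposition~\ref{sm_ineq}, now using the auxiliary weights $w_1,w_2$ already introduced after Lemma~\ref{lower}. Adding the two displayed chains of inequalities for $m_1$ and $m_2$ gives
$$ \lfloor \tfrac{a_1}{a} m \rfloor + \lfloor \tfrac{a_2}{a} m \rfloor \;\ge\; m_1 + m_2 \;\ge\; \lceil \tfrac{r_2 - s_2^*}{r_2} m \rceil + \lceil \tfrac{r_1 - s_1^*}{r_1} m \rceil . $$
Arguing by contradiction, I would assume $m < \frac{r_1 r_2}{d}$ and show the left-hand side is $\le m-1$ while the right-hand side is $\ge m$, which is impossible.

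For the left-hand side: since $a_1 + a_2 = a$, the numbers $\frac{a_1}{a}m$ and $\frac{a_2}{a}m$ sum to $m$, so the sum of their floors is $m$ if $\frac{a_1}{a}m \in \ZZ$ and $m-1$ otherwise. From $1 + a_1 r_1 = s_1^* a$ together with $\gcd(r_1,a)=1$ one gets $\gcd(a_1,a)=1$ (this is the $(a,a_i)=1$ remark in the excerpt), hence $\frac{a_1}{a}m \in \ZZ$ precisely when $a \mid m$; the hypothesis $a \nmid m$ rules this out, so the left-hand side equals $m-1$.

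For the right-hand side, the key computation — playing the role that $\frac{s}{a}+\frac{\bar t}{b}=1-\frac1{ab}$ played in Proposition~\ref{sm_ineq} — is
$$ \frac{r_2 - s_2^*}{r_2} + \frac{r_1 - s_1^*}{r_1} = 2 - \frac{s_1^*}{r_1} - \frac{s_2^*}{r_2} = 2 - \Big(\frac{a_1}{a} + \frac{1}{a r_1}\Big) - \Big(\frac{a_2}{a} + \frac{1}{a r_2}\Big) = 1 - \frac{r_1 + r_2}{a r_1 r_2} = 1 - \frac{d}{r_1 r_2}, $$
using $s_i^* a = 1 + a_i r_i$, $a_1 + a_2 = a$, and $r_1 + r_2 = ad$. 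Hence $\frac{r_2 - s_2^*}{r_2} m + \frac{r_1 - s_1^*}{r_1} m = m - \frac{d}{r_1 r_2}m$, which lies strictly between $m-1$ and $m$ whenever $0 < m < \frac{r_1 r_2}{d}$; therefore the sum of the two ceilings is at least $\lceil m - \frac{d}{r_1 r_2}m \rceil = m$, contradicting the displayed inequality. Thus $m \ge \frac{r_1 r_2}{d}$, and then $\frac{a}{m} \le \frac{ad}{r_1 r_2} = \frac{r_1 + r_2}{r_1 r_2} = \frac{1}{r_1} + \frac{1}{r_2}$, as claimed.

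The only genuinely delicate point, as in the $cA$ setup generally, is the bookkeeping: one must check that $w_1$ and $w_2$ have strictly positive entries so that Lemma~\ref{lower} genuinely applies to $\sigma_{w_1},\sigma_{w_2}$ and the inequalities $\lfloor \tfrac{a_i}{a}m\rfloor \ge m_i \ge \lceil \cdots \rceil$ are legitimate — but these positivity and irreducibility facts are exactly what is imported from \cite[Section~4, Case~Ib]{CJK15} and Lemma~\ref{lower}. Granting that, everything reduces to the elementary floor/ceiling manipulation above.
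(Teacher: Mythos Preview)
Your argument is correct and is essentially the paper's own proof: the same auxiliary weights $w_1,w_2$, the same floor/ceiling squeeze, and the same key identity $\frac{r_1-s_1^*}{r_1}+\frac{r_2-s_2^*}{r_2}=1-\frac{d}{r_1r_2}$ (the paper derives it via $r_2s_1^*+r_1s_2^*=d+r_1r_2$, you via $\frac{s_i^*}{r_i}=\frac{a_i}{a}+\frac{1}{ar_i}$, which is the same computation). Your closing remark about positivity of the entries of $w_1,w_2$ is a fair caveat that the paper also defers to \cite{CJK15}.
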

\begin{proof}
Following \cite[Section 4, Case Ib]{CJK15}, there exist positive integers $a_1,a_2,s_1^*,s_2^*$ such that $1+a_1 r_1= s_1^*a$, $1+ a_2 r_2=s_2^*a$ with $a_1+a_2=a$.
Clearly, $\gcd(a,a_1)=1$, $a_1<a, a_2<a, s_2^*<r_2$ and $s_1^*<r_1$.

 We consider \footnote{The idea of \cite{CJK15} is to consider a nicely chosen weighted up over the singular points of high Cartier index $r_i$ on $Y$. Then the so-called 2-ray game will lead to another extraction which is a weighted blow up with different weight. This is how $w_1,w_2$ are chosen. In fact, in the language of toric geometry, $Y$ is obtained by a subdivision of a cone along the vector $w$. Further weighted blow up (which we choose Kawamata blow up in our case) corresponds to further subdivision along a vector. Explicit computation of these vectors explicitly gives $w_1, w_2$. }
  $$ w_2=(r_1-a_2d+s_2^*, r_2-s_2^*, a_1,1);$$
$$w_1=(r_1-s_1^*, r_2-a_1d+s_1^*, a_2, 1).$$

Note that $w_1\succeq \frac{r_1-s_1^*}{r_1} w$ and $w_2\succeq \frac{r_2-s_2^*}{r_2} w$.
Let $m_i=w_i(f)$ for $i=1,2$. By Lemma \ref{lower} and inequalities in Lemma \ref{mainlem}, we see that
$$ \lfloor \frac{a_1}{a}m \rfloor \ge m_1 \ge \lceil \frac{r_1-s_1^*}{r_1} m \rceil;  $$
$$ \lfloor \frac{a_2}{a}m \rfloor \ge m_2 \ge \lceil \frac{r_2-s_2^*}{r_2} m \rceil.  $$

Since $a_1+a_2=a$, one has
$$ \lfloor \frac{a_1}{a} m \rfloor + \lfloor \frac{a_2}{a} m \rfloor= \left\{ \begin{array}{ll} m-1 & \text{ if } \frac{a_1}{a} m \not \in \bZ; \\ m & \text{ if } \frac{a_1}{a} m  \in \bZ. \end{array} \right.
$$
Now the assumption $a\nmid m$ together with $\gcd(a,a_1)=1$ show that it is $m-1$.

Suppose on the contrary that $m < \frac{r_1r_2}{d}$.
Note also that $$r_2s_1^*a+r_1s_2^*a=r_2+a_1r_1r_2+r_1+a_2r_1r_2=ad+ar_1r_2.$$
It follows that
\begin{align*} & \lceil \frac{r_1-s_1^*}{r_1} m \rceil+\lceil \frac{r_2-s_2^*}{r_2} m \rceil \ge  \lceil \frac{r_1-s_1^*}{r_1} m+\frac{r_2-s_2^*}{r_2} m  \rceil \\ &= \lceil (2-  \frac{r_2s_1^*+r_1s_2^*}{r_1r_2})m \rceil = \lceil m-\frac{md}{r_1r_2} \rceil=m,
\end{align*}
which is a contradiction.
\end{proof}

We will need the following easy observation.
\begin{lem}\label{easycA}
Keep the notation as above. The following holds.
 \begin{enumerate}
 \item If $z$ or $u \in f$, then $\ct(X,S) \ge 1$.
\item If $P\in X$ admits a weighted blow up of weight $(s_1,s_2,1,1)$ such that $s_1, s_2 \ge 2$ and $s_1+s_2=d$, then $\ct(X,S) \not \in (\frac{1}{2},1)$.
\end{enumerate}
\end{lem}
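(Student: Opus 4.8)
The plan is to prove the two items of Lemma \ref{easycA} separately, using the explicit shape of $\varphi = xy + g(z,u)$ and of the weighted blow up $\sigma_w$ with $w = (r_1, r_2, a, 1)$ from the setup of this section.

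For item (1), I would argue directly from the definition of weighted multiplicity $m = w(f)$. If $z \in f$, then $m = w(f) \le w(z) = a$, whence $\ct = \frac{a}{m} \ge 1$; similarly if $u \in f$ then $m \le w(u) = 1 \le a$, so again $\ct \ge 1$. (In fact one should check that the exceptional divisor of $\sigma_w$ stays irreducible so that the formula $\ct = \frac{a}{m}$ applies, but this is exactly the normalization already imposed on $w$ in the discussion preceding the lemma, so no extra work is needed.) The only subtlety is to confirm that $a \ge 1$, which is immediate. This item is essentially a one-line observation and presents no obstacle.

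For item (2), suppose $P \in X$ admits a weighted blow up $\sigma'$ with weights $w' = (s_1, s_2, 1, 1)$ where $s_1, s_2 \ge 2$ and $s_1 + s_2 = d$. First I would verify that $w'$ is admissible and that the exceptional divisor $E'$ of $\sigma'$ is irreducible: since $w'(\varphi) \le w'(xy) = s_1 + s_2 = d$ and $w'(z^d) = d$, the leading form $\varphi^{w'}$ contains $xy + (\text{terms in }z,u\text{ of weight }d)$, and one checks it defines an irreducible subvariety (the key point being $z^d \in g$ and $w(g) = w(z^d)$, so $z^d$ survives). Then $a' = \dim$-type discrepancy for $w'$ equals $(s_1 + s_2 + 1 + 1) - w'(\varphi) - 1 = d + 1 - w'(\varphi)$; since $w'(\varphi) = d$ (using $s_1,s_2\ge 2$ forces the minimum to be attained by $xy$ and/or $z^d$, not by a spurious lower-weight monomial of $g$), we get $a' = 1$. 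Now the valuation inequality gives $\frac{a'}{m'} = \frac{1}{m'} \ge \ct$, where $m' = w'(f)$. Since $m'$ is a positive integer, either $m' = 1$, forcing $\ct \le 1$ but also then $\ct \ge 1$ would need separate handling, or $m' \ge 2$, giving $\ct \le \frac{1}{2}$. To kill the boundary case $\ct \in \{\frac12, 1\}$ cleanly and conclude $\ct \notin (\frac12, 1)$, I would observe: if $m' = 1$ then $f$ contains a monomial of $w'$-weight $1$, i.e. $z$ or $u$, and item (1) then gives $\ct \ge 1$, so $\ct \notin (\frac12,1)$; if $m' \ge 2$ then $\ct = \frac{a}{m} \le \frac{a'}{m'} = \frac{1}{m'} \le \frac12$, again excluding the open interval.

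The main obstacle I anticipate is the bookkeeping in item (2) showing $a' = 1$, i.e. verifying that $w'(\varphi) = d$ exactly rather than something smaller. This requires knowing that no monomial of $g(z,u)$ has $w'$-weight strictly below $d$; this follows because $w = (r_1,r_2,a,1)$ has $w(g) = r_1 + r_2 = ad$ with $z^d$ achieving the minimum, and any monomial $z^i u^j \in g$ with $ai + j \ge ad$ satisfies, after comparing with the truncated weight $v' = (1,1)$ on $(z,u)$ scaled appropriately, $w'(z^i u^j) = i + j \ge d$ — the inequality $i + j \ge d$ being forced once $ai + j \ge ad$ and $i \le d$. This is a short but genuine calculation rather than a triviality, so it is the one place where care is needed; everything else reduces to the elementary inequalities (1) and (2) and to item (1).
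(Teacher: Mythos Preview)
Your approach is correct and essentially identical to the paper's: item (1) is the same one-line estimate $m \le w(z)=a$ (resp.\ $w(u)=1$), and for item (2) the paper likewise uses $w'=(s_1,s_2,1,1)$, notes $a'=1$, gets $m' \le \frac{m}{a} = \frac{1}{\ct} < 2$, hence $m'=1$, hence $z$ or $u \in f$, contradicting (1).

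One remark on what you flagged as the ``main obstacle'': you do not need the ad hoc computation for $w'(\varphi)=d$. This is exactly the content of Lemma~\ref{lower} (with $a'=1<a$, $r_1'=s_1$, $r_2'=s_2$), whose proof gives both the irreducibility of $E'$ and $w'(\varphi)=a'd=d$ via the truncated-weight inequality $v' \succeq \tfrac{a'}{a} v$, i.e.\ $v'(g) \ge \tfrac{1}{a} v(g)=d$. Your direct argument at the end (``$i+j \ge d$ once $ai+j \ge ad$ and $i \le d$'') leaves the case $i>d$ unaddressed; that case is of course trivial, but the scaling argument $v' \succeq \tfrac{1}{a} v$ handles everything uniformly and is what the paper actually invokes.
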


\begin{proof}
If $z $  (resp. $u \in f$) then $m=w(f) \le w(z)= a$  (resp. $m=w(f)=w(u)=1$) and hence $\ct(X,S)=\frac{a}{m} \ge 1$.

Suppose that $\ct(X,S) \in (\frac{1}{2},1)$.
Consider the weighted blow up of weight $w':=(s_1,s_2,1,1)$ such that $s_1, s_2 \ge 2$.
Let $m':=w'(f)$ be the weighted multiplicity. Note also that weighted discrepancy $a'=1$ in this situation. By Lemma \ref{lower} and definition of canonical threshold, one sees $m' \le \frac{m}{a}=\frac{1}{\ct(X,S)} <2$. Hence either $z$ or $u \in f$, a contradiction.
\end{proof}

\begin{prop}\label{cA2}
We have $\mathcal{T}_{3,cA}^{\textup{can}} \cap (\frac{1}{2},1) \subseteq \{\frac{1}{2}+\frac{1}{n}\}_{n \ge 3}$.
\end{prop}
\begin{proof}
Suppose that $\ct(X,S)=\frac{a}{m} \in \mathcal{T}_{3,cA}^{\textup{can}} \cap (\frac{1}{2},1)$ where $a\ge 5$ is the weighted discrepancy where $\mathcal{T}_{3,cA}^{\textup{can}}$ is defined on page 3.
Note that $a\nmid m$.

Without loss of generality, we assume $r_1\le r_2$. 
Suppose first that $d \ge 4$, then we consider weighted blow up with weight $w'=(d-2,2,1,1)$.  One reaches a contradiction by Lemma \ref{easycA}.

Suppose next that $d=3$, we consider $w'=(1,2,1,1)$. From Lemma \ref{lower} and Lemma \ref{mainlem}, the weighted blow up with weight $w'$ is a divisorial contraction and thus $$\frac{1}{2}<\ct(X,S)\leq\frac{a'}{m'},$$ where $a'$ (resp. $m':=w'(f)$) denotes the weighted discrepancy (resp. weighted multiplicity) with respect to $w'$. As $a'=1$, one sees $m'=1$. Let $\frak{m}'\in f$ be a monomial with $w'(\frak{m}')=m'=1$.  (1) of Lemma \ref{easycA} implies that either $\frak{m}'=x$ or $\frak{m}'=y$. Since $w'(y)=2$, $x=\frak{m}'\in f$. Hence $m=w(f) \le w(x)=r_1$. By Proposition \ref{cAub}, one has $3=d \ge r_2  \ge r_1$. Hence $\ct(X,S)=\frac{a}{m} \ge \frac{5}{3}$, a contradiction.

Finally let $d=2$.
We consider  $w'=(1,1,1,1)$ and denote by $a'$ (resp. $m'$) the weighted discrepancy (resp. weighted multiplicity) with respect to this $w'$. Above argument (in the case $d=3$) yields $a'=m'=1$ and either $x=\frak{m}'\in f$ or $y=\frak{m}'\in f$.
In particular, $m\leq \max\{r_1,r_2\}=r_2$ and hence $2=d \ge r_1$ by Proposition \ref{cAub}.

Suppose that $r_1=2$. Proposition \ref{cAub} implies $m=r_2=ad-r_1=2a-2$ and so $$\ct(X,S)=\frac{a}{2a-2}=\frac{1}{2}+\frac{1}{2a-2}.$$

It remains to consider $r_1=1$ (and hence $r_2=2a-1$). Take $w_3=(1,2a-3,a-1,1)$.
Thus by Lemma \ref{lower} and the inequalities in Lemma \ref{mainlem}, we have
$$ \lfloor \frac{a-1}{a} m \rfloor \ge m_3 \ge \lceil \frac{2a-3}{2a-1} m \rceil. $$

Since $ \frac{1}{2} < \frac{a}{m} < 1$, it follows that
$\lfloor \frac{a-1}{a} m \rfloor=m-2$. Also
$$\lceil \frac{2a-3}{2a-1} m \rceil= \lceil m-\frac{2}{2a-1}m\rceil= \left\{ \begin{array}{ll} m-1 & \text{ if } m<2a-1; \\ m-2 & \text{ if } m=2a-1. \end{array} \right. $$

Hence $m=2a-1$ and notice that  $$\ct(X,S)=\frac{a}{2a-1}=\frac{1}{2}+\frac{1}{4a-2}.$$
Therefore,
$\mathcal{T}_{3,cA}^{\textup{can}} \cap (\frac{1}{2},1) \subseteq \{\frac{1}{2}+\frac{1}{n}\}_{n \ge 3}$.
\end{proof}

Next, we consider the ascending chain condition for canonical thresholds.
It is known that $\mathcal{T}_{3,sm}^{\textup{can}}$ satisfies the ACC (See \cite[Theorem 1.7]{Stepanov}).
In fact, for each type of $*$, we will use the following similar argument which is a  generalization of that in \cite[Lemma 2.6]{Stepanov}\footnote{In the proof of \cite[Lemma 2.6]{Stepanov}, Stepanov cited a Russian article for first and second finiteness properties of the semigroup $\mathbb{Z}^n_{\geq 0}$. Please see \cite[\textsection 3]{Khov} for English version. See also \cite[Appendix A]{HLL22} suggested by the referees.}.  
Suppose on the contrary that $\mathcal{T}_{3, *}^{\textup{can}}$ is not an ACC set. That is,  there is an infinite increasing sequence $\ct_1<\ct_2<\ct_3<\cdots$ with each canonical threshold $\ct_k\in \mathcal{T}_{3,*}^{\textup{can}}$.
For each $k$, let $\ct_k=\ct(X_k,S_k)$ where $P_k\in X_k$ is of (fixed) type $*$ and
$S_k$ is an effective Weil divisor defined by the equation $f_k$ analytically locally near $P_k \in X_k$.

Now, each canonical threshold $\ct(X,S)_k$ is computed by some divisorial contraction $\sigma_k: Y_k \to X_k$.
The classification of divisorial contraction asserts that $\sigma_k$ is a weighted blow up with weight $w_k$ over $P_k\in X_k$. Let $a_k$ denote the weighted discrepancy of $\sigma_k$ and $m_k=n_kw_k(f_k)$ denote the weighted multiplicity where $n_k$ is the index of $P_k \in X_k$ defined in section 2.

\bigskip
\noindent
{\bf Assumption A.}
Fix a type $*$ among $cA, cA/n, cD$ or $cD/2$.  For each infinite increasing sequence $\{\ct_k\}$ in $\mathcal{T}_{3, *}^{\textup{can}}$, we are able to find integers $i < j$ and an auxiliary weight $w^i_j$ such that:
\begin{enumerate}
\item
 the  weighted multiplicities satisfies $w_i(f_i) \le w_i(f_j)$(see Remark \ref{embedding});
\item  $n_iw_i \preceq n_j w^i_j$;
\item  the weighted blow up $\sigma^i_j:Y^i_j\to X_j$ with weight $w^i_j$ over the point $P_j\in X_j$ has prime exceptional divisor, denoted by $E^i_j$. Then in this situation,  $K_{Y^i_j}=\sigma^{i*}_j K_{X_j}+\frac{a_i}{n_j}E^i_j$, where $a_i$ is the weighted discrepancy.
\end{enumerate}

\begin{prop}\label{assum} Fix a type $*$ among $cA, cA/n, cD$ or $cD/2$. Suppose that Assumption A holds. Then $\mathcal{T}_{3,*}^{\textup{can}}$ satisfies the ascending chain condition.
\end{prop}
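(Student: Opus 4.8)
The plan is to argue by contradiction, following the generalization of Stepanov's argument outlined just above the statement. Suppose $\mathcal{T}_{3,*}^{\textup{can}}$ is not ACC, so there is a strictly increasing sequence $\ct_1 < \ct_2 < \cdots$ with $\ct_k = \ct(X_k, S_k) = \frac{a_k}{m_k}$, each computed by a weighted blow up $\sigma_k \colon Y_k \to X_k$ with weights $w_k$ over a point $P_k \in X_k$ of the fixed type $*$, with weighted discrepancy $a_k$ and weighted multiplicity $m_k = n_k w_k(f_k)$. The point of Assumption A is that it hands us, for a suitable pair $i < j$, an auxiliary weight $w^i_j$ on $X_j$ whose weighted blow up $\sigma^i_j$ has irreducible exceptional divisor $E^i_j$ with weighted discrepancy exactly $a_i$.

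First I would fix such a pair $i < j$ furnished by Assumption A. Since $E^i_j$ corresponds to a valuation and $\ct_j = \ct(X_j, S_j)$ is the minimum of $\frac{a}{m}$ over all valuations, we get
$$
\ct_j \;=\; \ct(X_j, S_j) \;\le\; \frac{a_i}{n_j\, w^i_j(f_j)}.
$$
Next I would bound the denominator from above using condition (2) of Assumption A: from $n_i w_i \preceq n_j w^i_j$ one has $n_j w^i_j(f_j) \ge n_i w_i(f_j)$, and then condition (1), $w_i(f_i) \le w_i(f_j)$, gives $n_i w_i(f_j) \ge n_i w_i(f_i) = m_i$. Hence $n_j w^i_j(f_j) \ge m_i$, so
$$
\ct_j \;\le\; \frac{a_i}{n_j\, w^i_j(f_j)} \;\le\; \frac{a_i}{m_i} \;=\; \ct_i.
$$
This contradicts $\ct_i < \ct_j$, and therefore $\mathcal{T}_{3,*}^{\textup{can}}$ satisfies ACC.

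I expect the proof of the proposition itself to be essentially this short chain of inequalities; all the real work is hidden inside verifying Assumption A for each individual type $*$, which is deferred to the later sections. The only subtle point to get right in writing the argument is the bookkeeping in the footnote to condition (1): once the type of point is fixed one normalizes the local equations of $X_j$ into a standard form so that the weight $w_i$ (originally a weight on $X_i$) can be regarded as a weight on $X_j$ and the Newton diagrams $\Gamma^+(f_i)$ and $\Gamma^+(f_j)$ can be compared, making $w_i(f_j)$ meaningful. Granting that setup, one should also note that the inequality $\ct_j \le a_i/(n_j w^i_j(f_j))$ uses precisely that $a_i$ is the weighted discrepancy of $\sigma^i_j$, which is exactly clause (3) of Assumption A, so no separate discrepancy computation is needed here. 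The main obstacle is thus not in this proof but in producing, for each type, a pair $i<j$ and a weight $w^i_j$ meeting all three clauses — a finiteness statement about the semigroup $\bZ^n_{\ge 0}$ and the combinatorics of admissible weights — which is precisely what the subsequent sections carry out.
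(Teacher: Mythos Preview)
Your proposal is correct and follows essentially the same argument as the paper: fix $i<j$ from Assumption A, use clause (3) to get $\ct_j \le a_i/(n_j w^i_j(f_j))$, then chain clauses (2) and (1) to bound $n_j w^i_j(f_j) \ge n_i w_i(f_j) \ge n_i w_i(f_i)=m_i$, yielding $\ct_j \le \ct_i$. Your additional remarks on the bookkeeping and on where the real work lies are accurate and match the paper's structure.
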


\begin{proof}
Fix any increasing sequence $\{\ct_k\}$ in $\mathcal{T}_{3, *}^{\textup{can}}$, and suppose that Assumption A holds.
Combining (1) with (2), we have the following: $$n_i w_i(f_i) \leq n_i w_i(f_j)
\le n_j w^i_j(f_j).$$
Moreover by (3), $E^i_j$ defines a valuation on $X_j$ and computation on $E^i_j$ shows that $\frac{a_i}{n_jw^i_j(f_j)} \ge \ct(X_j, S_j)=\ct_j$.
Then we have \[\ct_i=\frac{a_i}{n_i w_i(f_i)}
\ge \frac{a_i}{n_j w^i_j(f_j)}\ge \ct_j,\]
which is the desired contradiction.
\end{proof}

\begin{rem}\label{embedding}
Once the types of points $P_i$ and $P_j$ are the same with index $n_i=n_j$ such that one of the following holds:
\begin{itemize}
\item $n_i=1$;
\item $n_i>1$, and $b_{iq}=b_{jq}$ for all $q=1,...,4$ (resp. $q=1,...,5$) where for $k=i,j$, $P_k\in X_k$ is defined by the equation $\varphi_k=0/\frac{1}{n_k} (b_{k1}, b_{k2}, b_{k3}, b_{k4})$ (resp. $\varphi_{k1}=\varphi_{k2}=0/\frac{1}{n_k} (b_{k1}, b_{k2}, b_{k3}, b_{k4},b_{k5}$)), 
\end{itemize}
it make sense to consider the weight $w_i$ on $X_j$ and define the multiplicity $w_i(f_j)$ and compare the Newton diagrams $\Gamma^+(f_i)$ and $\Gamma^+(f_j).$
\end{rem}

\begin{prop}\label{cAACC} The Assumption A holds for $cA$. Hence
$\mathcal{T}_{3,cA}^{\textup{can}}$ satisfies the ascending chain condition.
\end{prop}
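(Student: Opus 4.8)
The plan is to verify that \textbf{Assumption A} holds for the $cA$ case, and then invoke Proposition \ref{assum}. Recall the setup: for each $k$, the point $P_k \in X_k$ is a $cA$ point, so by Theorem 1.2(i) of \cite{Kawakita05} we may normalize $X_k$ as $(\varphi_k = x y + g_k(z,u) = 0) \subset \bC^4$ with the divisorial contraction $\sigma_k$ being the weighted blow-up of weight $w_k = (r_1^{(k)}, r_2^{(k)}, a_k, 1)$ subject to $w_k(g_k) = r_1^{(k)} + r_2^{(k)} = a_k d_k$, $z^{d_k} \in g_k$, and $\gcd(r_1^{(k)}, a_k) = \gcd(r_2^{(k)}, a_k) = 1$. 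Since the index of a $cA$ point is $n_k = 1$, the normalization $n_k w_k \preceq n_j w^i_j$ in item (2) of Assumption A becomes simply $w_i \preceq w^i_j$, and $m_k = w_k(f_k)$.

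First I would handle the truncated weight on $\{z,u\}$. Set $v_k = (a_k, 1)$; we have $v_k(g_k) = a_k d_k$. Given an increasing sequence $\{\ct_k\}$, the denominators $d_k$ appearing are constrained: by Proposition \ref{cAub} (when $a_k \nmid m_k$) one has $\ct_k \le \frac{1}{r_1^{(k)}} + \frac{1}{r_2^{(k)}} \le \frac{2 d_k}{a_k d_k} = \frac{2}{a_k}$ using $r_1^{(k)} + r_2^{(k)} = a_k d_k$ and $r_i^{(k)} \ge d_k$... more carefully, since $r_1^{(k)} + r_2^{(k)} = a_k d_k$ and, say, $r_1^{(k)} \le r_2^{(k)}$, the bound $\frac{1}{r_1^{(k)}} + \frac{1}{r_2^{(k)}}$ is maximized when $r_1^{(k)}$ is small. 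The point is that by passing to a subsequence I may assume $d_k = d$ is a fixed constant: either infinitely many $d_k$ equal some fixed value (done), or $d_k \to \infty$, in which case $r_1^{(k)} + r_2^{(k)} = a_k d_k$ forces, for the ratio $\ct_k = a_k/m_k$ to stay bounded below by some $\ct_1 > 0$, that $m_k \le a_k/\ct_1$ while $m_k = w_k(f_k)$; combined with $f_k$ defining a genuine Weil divisor this should push $\ct_k$ below $1/2$ or otherwise contradict boundedness. With $d$ fixed, I may further pass to a subsequence so that the finitely-many-valued data stabilize: normalize $f_k$ so that its Newton diagram $\Gamma^+(f_k) \subset \bR^4$ makes sense uniformly; by Dickson's lemma (the finiteness of minimal generators of a monoid ideal in $\bZ^4_{\ge 0}$ — this is the input attributed to Atsushi Ito in the acknowledgements), the sequence of Newton diagrams $\Gamma^+(f_k)$ has, after passing to a subsequence, the property that $\Gamma^+(f_i) \subseteq \Gamma^+(f_j)$ for all $i < j$ along the subsequence.

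Now I verify the three items for such $i < j$. For item (1): since $\Gamma^+(f_i) \subseteq \Gamma^+(f_j)$, every monomial of $f_j$ has its exponent vector in $\Gamma^+(f_i)$, hence has $w_i$-weight $\ge w_i(f_i)$; taking the minimum gives $w_i(f_j) \ge w_i(f_i) = m_i$, which is exactly $w_i(f_i) \le w_i(f_j)$. For item (3): I take $w^i_j = w_i = (r_1^{(i)}, r_2^{(i)}, a_i, 1)$ as a weight on $X_j$, which is legitimate because $X_j$ is in the same normal form $xy + g_j(z,u) = 0$ with $z^{d} \in g_j$; since $d$ is fixed, $w_i(g_j) \le w_i(z^{d}) = a_i d = r_1^{(i)} + r_2^{(i)} = w_i(\varphi_i)$, and one checks $w_i(g_j) \ge a_i d$ using the truncated-weight argument of Lemma \ref{lower} — so $\varphi_j^{w_i} = x y + z^{d}$, which is irreducible, giving irreducibility of $E^i_j$ and the stated discrepancy formula $K_{Y^i_j} = \sigma^{i*}_j K_{X_j} + a_i E^i_j$ (with $n_j = 1$). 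For item (2): with $w^i_j = w_i$ the relation $w_i \preceq w^i_j$ is the trivial equality. Having established Assumption A, Proposition \ref{assum} immediately yields that $\mathcal{T}_{3,cA}^{\textup{can}}$ satisfies ACC.

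The main obstacle I anticipate is the reduction to a fixed $d$ (and the accompanying normalization that makes comparing $\Gamma^+(f_i)$ with $\Gamma^+(f_j)$ on different ambient hypersurfaces $X_i$, $X_j$ legitimate), since Lemma \ref{lower} and Proposition \ref{cAub} only directly control things \emph{within} a single $X_k$; one needs to argue that along an increasing sequence of canonical thresholds the combinatorial type $(d_k, \text{shape of } g_k)$ cannot escape to infinity without the thresholds either decreasing or leaving $(0,1]$. The secondary technical point is the careful invocation of the Dickson/Ito finiteness to extract a subsequence with nested Newton diagrams while \emph{simultaneously} keeping the weight data $w_k$ in a normal form compatible with that nesting; I expect this to be routine once the fixed-$d$ reduction is in place, because then only finitely many weight shapes remain relevant after further subsequencing.
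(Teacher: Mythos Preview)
Your overall strategy is right, but there is a genuine gap and one sign error, and the paper's proof shows that the gap is entirely avoidable.

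The sign error first: you claim Dickson's lemma gives a subsequence with $\Gamma^+(f_i)\subseteq\Gamma^+(f_j)$ for $i<j$, and then deduce that every monomial of $f_j$ lies in $\Gamma^+(f_i)$. That deduction is backwards: $\Gamma^+(f_i)\subseteq\Gamma^+(f_j)$ says nothing about monomials of $f_j$ lying in the smaller set $\Gamma^+(f_i)$. What you actually need (and what Stepanov's Lemma~2.5 provides) is the \emph{opposite} inclusion $\Gamma^+(f_i)\supseteq\Gamma^+(f_j)$, from which $w_i(f_j)\ge w_i(f_i)$ follows immediately.

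The real gap is your reduction to a fixed $d$. Your dichotomy is fine, but in the branch $d_k\to\infty$ you only say this ``should push $\ct_k$ below $1/2$ or otherwise contradict boundedness''; no argument is given, and none is obvious---there is no a priori bound on $d_k$ along an increasing sequence of thresholds. You yourself flag this as the main obstacle, and it is a genuine hole.

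The paper simply does not make this reduction. After passing to a subsequence so that both $\{a_k\}$ and $\{d_k\}$ are \emph{non-decreasing} (no constancy needed) and $\{\Gamma^+(f_k)\}$ is non-increasing, the paper takes, for any $i<j$, the auxiliary weight
\[
w^i_j=(r_{i1},\,a_id_j-r_{i1},\,a_i,\,1)
\]
on $X_j$. Since $d_j\ge d_i$, the second entry satisfies $a_id_j-r_{i1}\ge a_id_i-r_{i1}=r_{i2}$, so $w_i\preceq w^i_j$ holds automatically. And because $a_i\le a_j$ and the sum of the first two entries equals $a_id_j$, Lemma~\ref{lower} applies on $X_j$ (where $z^{d_j}\in g_j$) to give an irreducible exceptional divisor with weighted discrepancy $a_i$. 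Thus all three items of Assumption~A are verified without ever fixing $d$. Your choice $w^i_j=w_i$ only works when $d_i=d_j$; the paper's extra twist in the second coordinate is exactly what absorbs the growth of $d_j$.
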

\begin{proof}
Fix  an infinite increasing sequence $\ct_1<\ct_2<\ct_3<\cdots$ with each $\ct_k\in \mathcal{T}_{3,cA}^{\textup{can}}$.
For each $k$, let $\ct_k=\ct(X_k,S_k)$ where $P_k\in X_k$ is defined by  $\varphi_k=xy+g_k(z,u)$ and $S_k$ is defined by $f_k$ analytically and locally.

Note that each $\ct(X_k,S_k)$ is realized by the divisorial contraction $\sigma_k: Y_k \to X_k$. Theorem 1.2(i) in \cite{Kawakita05} implies $\sigma_k$ is a weighted blow up of weight $w_k=wt(x,y,z,u)=(r_{k1},r_{k2},a_k,1)$ satisfying the following:
\begin{itemize}
\item $w_k(g_k(z,u))=r_{k1}+r_{k2}=a_kd_k$;
\item $z^{d_k}\in g_k(z,u)$ and hence $w_k(g_k)=w_k(z^{d_k})$;
\item $\gcd(r_{k1}, a_k)=\gcd(r_{k2}, a_k)=1$.
\end{itemize}

Passing to  subsequence, we may assume both sequences $\{a_k\}$ and $\{d_k\}$ are non-decreasing.
It follows from \cite[Lemma 2.5]{Stepanov} that we may assume the sequence of Newton polytopes  $\{\Gamma^+(f_k)\}$ is non-increasing.

We pick any $i <j$. Then we consider $(X_i,S_i)$ and $(X_j,S_j)$ and weights $w_i=(r_{i1},r_{i2},a_i,1), w_j=(r_{j1}, r_{j2},a_j,1)$.
We will consider the auxiliary weight $w^i_j=(r_{i1}, a_id_j-r_{i1},a_i,1)$. Clearly, $w_i \preceq w^i_j$.
Also, since $\Gamma^+(f_i) \supseteq \Gamma^+(f_j)$, one has $w_i(f_i) \le w_i (f_j)$. Note that $w_i (f_j)$ is well-defined since $P_i\in X_i$ has index one.

It remains to check the last condition of the Assumption A.
 Take the weighted blow up with weight $w^i_j$ over $X_j \ni P_j$. By Lemma \ref{lower}, its exceptional set is a prime divisor and the weighted discrepancy is $a_i$.

Therefore the Assumption A and hence the ACC holds by Proposition \ref{assum}.
\end{proof}

\section{canonical thresholds in $\mathcal{T}_{3, cA/n}^{\textup{can}}$}
In this section, we consider the canonical thresholds over $cA/n$ points. In viewing $\dagger_1$ and $\dagger_2$,  it is sufficient to consider divisorial contractions which are weighted blowups with $a \ge 5$. Hence we will assume that $a \ge 5$ in this section.

To investigate the canonical threshold in $\mathcal{T}_{3, cA/n}^{\textup{can}}$, let the canonical threshold $\ct(X,S)$ be computed by a weighted blow up $\sigma: Y \to X$
over a $cA/n$ point $P \in X$ with weighted discrepancy $a \ge 5$.
Theorem 1.2(i) in \cite{Kawakita05} shows that there exists an analytical identification $P\in X \simeq o\in (\varphi \colon xy+g(z^n,u)=0)$ in $\hat{\mathbb{C}}^4/\frac{1}{n}(1,-1,b,0)$ where $o$ denotes the origin of $\hat{\mathbb{C}}^4/ \frac{1}{n}(1,-1,b,0)$ and $\sigma$ is a weighted blow up of weight $w=wt(x,y,z,u)=\frac{1}{n} (r_{1},r_{2},a,n)$ satisfying the following:

\begin{itemize}
\item $nw(\varphi)=r_1+r_2=adn$ where $r_1,r_2,a,d,n$ are positive integers.
\item $z^{dn}\in g(z^n,u)$.
\item $a\equiv br_1$ (mod $n$) and $0<b<n$.
\item $\gcd(b,n)=\gcd(\frac{a-br_1}{n},r_1)=\gcd(\frac{a+br_2}{n},r_2)=1$ (See \cite[Lemma 6.6]{Kawakita05}).
\end{itemize}

Note that the condition $\gcd(b,n)=1$ follows from the classification of three dimensional non-Gorenstein terminal singularities by \cite{Mori85}.
The same proof as in Lemma \ref{lower} yields the following:

\begin{lem} \label{lower5}
Keep the notation as above. Suppose that there is another weight  $w'=\frac{1}{n}(r'_1, r'_2, a',1)$ satisfying $r'_1+r'_2=a'dn$, $a' \equiv br'_1$ (mod $n$),  and $a' \leq a$. Let $\sigma': Y' \to X$ be the weighted blow up with weight $w'$. Then the exceptional set $E'$ of $\sigma'$ is a prime divisor.
\end{lem}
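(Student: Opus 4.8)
The plan is to follow the proof of Lemma~\ref{lower} essentially verbatim; the only new feature is the presence of the cyclic quotient, which turns out not to affect the conclusion.

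First I would pass to the \emph{truncated weights} $v$ and $v'$ on the pair of coordinates $\{z,u\}$, obtained by restricting $w$ and $w'$ respectively, so $v=\tfrac{1}{n}(a,n)$ and $v'=\tfrac{1}{n}(a',n)$. By the normal form we have $\varphi=xy+g$ with $g=g(z^n,u)$, and the hypotheses give $w(\varphi)=\tfrac{1}{n}(r_1+r_2)=ad=v(g)$, the minimum being attained at the monomial $z^{dn}$, while $w'(xy)=\tfrac{1}{n}(r'_1+r'_2)=a'd$. Since $a'<a$ and the $u$-entries of $v$ and $v'$ coincide, we have $v'\succeq \tfrac{a'}{a}v$, hence $v'(g)\ge \tfrac{a'}{a}v(g)=a'd$; combined with $v'(z^{dn})=a'd$ this forces $w'(\varphi)=a'd=w'(xy)$, so that $\varphi^{w'}=xy+g^{v'}$, where $g^{v'}$ denotes the $v'$-leading form of $g$ (possibly zero).

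The crux is the elementary observation that, for a monomial, $v'(z^{j}u^{k})=\tfrac{a'}{a}\,v(z^{j}u^{k})$ if and only if $k=0$, with strict inequality $v'(z^{j}u^{k})>\tfrac{a'}{a}\,v(z^{j}u^{k})$ when $k\ge 1$. Therefore a monomial of $g$ can have $v'$-weight equal to $a'd$ only if it is a pure power of $z$, and the only pure power of $z$ of $v$-weight $ad$ is $z^{dn}$. Hence $\varphi^{w'}=xy+c\,z^{dn}$ for some $c\ne 0$. As $xy+c\,z^{dn}$ is irreducible as a polynomial (it has degree one in $x$ over $\mathbb{C}[y,z]$ with coprime coefficients) and is weighted homogeneous for $w'$, it cuts out a reduced irreducible hypersurface; the admissibility condition $a'\equiv b r'_1 \pmod n$ ensures $w'$ defines a genuine divisorial valuation, and quotienting by $\tfrac{1}{n}(1,-1,b,0)$ preserves irreducibility and reducedness. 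Thus the exceptional divisor $E'$ of $\sigma'$ is irreducible, as claimed.

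The only step that requires genuine care — and the only place the $cA/n$ setting departs from Lemma~\ref{lower} — is the reduction of $\varphi^{w'}$ to the binomial $xy+c\,z^{dn}$ together with the passage through the quotient; but both are routine: the binomial reduction is exactly the observation above, and irreducibility of the defining equation before passing to the quotient gives irreducibility of $E'$ afterward. So in fact there is no serious obstacle here, which is why ``the same proof as in Lemma~\ref{lower}'' suffices.
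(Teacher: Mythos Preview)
Your proof is correct and follows exactly the approach the paper intends: the paper's own ``proof'' is the single sentence that the same argument as in Lemma~\ref{lower} works, and you have carried out precisely that argument, adapting the truncated weights to $v=\tfrac{1}{n}(a,n)$, $v'=\tfrac{1}{n}(a',n)$ and concluding $\varphi^{w'}=xy+c\,z^{dn}$. The extra remarks you supply about the passage through the cyclic quotient are appropriate elaborations, not a different method.
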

\begin{proof}
It is convenient to consider {\it truncated weight} $v=\frac{1}{n}(a,1)$ and $v'=\frac{1}{n}(a',1)$ on $\{z, u \}$. Note that $w(\varphi)=v(g)=ad$.
Since $v' \succeq \frac{a'}{a} v$, one has
$v'(g) \ge \frac{a'}{a} v(g) = a'd,$ and hence $w'(\varphi)=a'd$.

Suppose that $a' \le a$. One has \[ \varphi^{w'}=xy+g^{w'}(z^n,u),\]
where $g^{w'}(z^n,u)$ is the $w'$-weighted homogeneous part of $g(z^n,u)$. As $z^{dn} \in g^{w'}(z^n,u)$, one has $g^{w'}(z^n,u) \ne 0$ and hence $\varphi^{w'}$ is irreducible by Lemma \ref{irred}.(1). The exceptional set $E'$ of $\sigma'$ which is a hypersurface in $\bP(r_1', r_2', a',1)$ defined by $\varphi^{w'}$ is thus irreducible.
 This verifies the proof of Lemma \ref{lower5}.
\end{proof}

Again, we will employ the method developed in \cite{CJK15} to produce two weights $w_1, w_2$ complementary to each other to approximate the weight computing canonical threshold. The approach is basically the same however more subtle than that in Proposition \ref{cAub}.

\begin{prop} \label{cAnub} Keep the notation as above.
Suppose that $\ct(X,S)=\frac{a}{m}$ is realized over a $cA/n$ point $P \in X$. Suppose furthermore
$a\nmid m$.
Then $m \ge \frac{r_1 r_2}{dn^2}$.
\end{prop}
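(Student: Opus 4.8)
The plan is to mirror the proof of Proposition \ref{cAub}, with the $cA$-weights of \cite{CJK15} replaced by the $cA/n$-weights recorded above from \cite{CJK14}. Starting from the data $s_i, s_i^*, q_i, \delta_i$, one manufactures two auxiliary admissible weights $w_1, w_2$ over $P \in X$ of the shape treated in Lemma \ref{lower5}, so that each $\sigma_{w_i}$ is a genuine weighted blow up with irreducible exceptional divisor. These are chosen so that their weighted discrepancies are $a_1, a_2$ with $a_1 + a_2 = a$, $a_i < a$, and $\gcd(a_1, a_2) = 1$, and so that $w_1 \succeq \mu_1 w$, $w_2 \succeq \mu_2 w$ for positive rationals $\mu_1, \mu_2$ with $\mu_1 + \mu_2 = 1 - \tfrac{dn^2}{r_1 r_2}$. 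The last identity is forced, after a short computation, by the relations $\delta_i r_i + n = a s_i^*$ together with $r_1 + r_2 = adn$.

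Granting the construction, set $m_i = n\,w_i(f)$. Then Inequalities \eqref{eq1} and \eqref{eq2} yield
\[
\Big\lfloor \tfrac{a_1}{a} m \Big\rfloor + \Big\lfloor \tfrac{a_2}{a} m \Big\rfloor \;\geq\; m_1 + m_2 \;\geq\; \lceil \mu_1 m \rceil + \lceil \mu_2 m \rceil .
\]
Since $\gcd(a, a_1) = \gcd(a_1, a_2) = 1$, the hypothesis $a \nmid m$ forces $\tfrac{a_1}{a} m \notin \bZ$; as $\tfrac{a_1}{a} m + \tfrac{a_2}{a} m = m$, the left-hand side is exactly $m - 1$. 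Suppose, for contradiction, that $m < \tfrac{r_1 r_2}{dn^2}$, so that $0 < \tfrac{dn^2}{r_1 r_2} m < 1$. Then the right-hand side satisfies
\[
\lceil \mu_1 m \rceil + \lceil \mu_2 m \rceil \;\geq\; \lceil (\mu_1 + \mu_2) m \rceil \;=\; \Big\lceil m - \tfrac{dn^2}{r_1 r_2} m \Big\rceil \;=\; m,
\]
contradicting the bound $m - 1$. Hence $m \geq \tfrac{r_1 r_2}{dn^2}$, as desired.

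The main obstacle is the construction of $w_1$ and $w_2$ in the quotient setting: one must produce weight vectors with strictly positive entries satisfying the $\tfrac1n(1,-1,b,0)$-admissibility congruences and the normalizations (sum of the first two entries $= a_i d n$, together with $z^{dn}$-minimality), whose weighted discrepancies come out to be $a_1, a_2 < a$ with $a_1 + a_2 = a$. Since only one of $\delta_1 > 0$, $\delta_2 > 0$ is guaranteed, this presumably requires relabelling so that $\delta_1 > 0$ and then building the two weights asymmetrically out of $(s_1, s_1^*, \delta_1)$ and $(s_2, s_2^*)$ respectively, exactly along the lines of \cite[3.5]{CJK14}. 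Checking the coordinatewise dominations $w_i \succeq \mu_i w$ and pinning down the value of $\mu_1 + \mu_2$ is then a direct but bookkeeping-heavy computation with the displayed relations; the coprimality $\gcd(a_1, a_2) = 1$ follows once each $a_i$ is seen to be invertible modulo $a$, as in the $cA$ case.
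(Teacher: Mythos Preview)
Your plan is correct and is exactly the paper's approach. The one place your sketch deviates is the construction of $w_1,w_2$: you propose building them asymmetrically from $(s_1,s_1^*,\delta_1)$ and $(s_2,s_2^*)$, but this runs into trouble since $\delta_2$ need not be positive and $\delta_1+\delta_2\ne a$ in general. The paper instead (after assuming WLOG $\delta_1>0$) builds \emph{both} weights from $(s_1^*,\delta_1)$ alone:
\[
w_1=\tfrac{1}{n}\bigl(r_1-s_1^*,\; r_2-\delta_1 dn+s_1^*,\; a-\delta_1,\; n\bigr),\qquad
w_2=\tfrac{1}{n}\bigl(s_1^*,\; \delta_1 dn-s_1^*,\; \delta_1,\; n\bigr),
\]
so $a_1=a-\delta_1$, $a_2=\delta_1$, $\mu_1=\tfrac{r_1-s_1^*}{r_1}$, $\mu_2=\tfrac{\delta_1 dn-s_1^*}{r_2}$. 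Using $\delta_1 r_1+n=as_1^*$ and $r_1+r_2=adn$ one computes directly
\[
\mu_1+\mu_2=\frac{r_1r_2-s_1^*(adn)+r_1\delta_1 dn}{r_1r_2}=\frac{r_1r_2-dn^2}{r_1r_2}=1-\frac{dn^2}{r_1r_2},
\]
which is precisely the identity you anticipated. With this explicit choice, your floor/ceiling argument via Inequalities \eqref{eq1}--\eqref{eq2} goes through verbatim; the non-integrality $a\nmid\delta_1 m$ (equivalently $\gcd(a,\delta_1)=1$, from $\delta_1 r_1\equiv -n\pmod a$) is the step you correctly flagged.
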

\begin{proof}
We keep notations as above in this section and also follow the construction as in \cite[3.5]{CJK14}. We put $s_1:=\frac{a-br_1}{n}$ and $s_2:=\frac{a+br_2}{n}$. Recall that the integer $s_i$ is relatively prime to $r_i$, $i=1,2$ and we have the following:
$$\left\{ \begin{array}{cccc} a&=br_1+ns_1; \\  1&=q_1r_1+s^*_1s_1; \\ a&=-br_2+ns_2; \\ 1&=q_2r_2+s^*_2s_2. \end{array} \right.$$
for some integer $0\le s^*_i<r_i$ and some integer $q_i$.

In order to construct new weights $w_1$ and $w_2$ below, we set
\[\delta_1:=-nq_1+bs^*_1,\ \ \delta_2:=-nq_2-bs^*_2,\]
where $a-\delta_i$ (resp. $\delta_i$) will be the weighted discrepancy of the weighted blow up with weight $w_1$ (resp. $w_2$) when $\delta_i>0$.

It follows that
$$\left\{ \begin{array}{cc} \delta_1r_1+n&=as_1^*, \\  \delta_2r_2+n&=as_2^*, \end{array} \right.$$
and either $\delta_1>0$ or $\delta_2>0$ by \cite[Claims 1,2 in 3.5]{CJK14}. Indeed, for $i=1,2$, we have
\begin{align*}
\delta_ir_i+n&=(-nq_i+(-1)^{i+1}bs_i^*)r_i+n=n(1-q_ir_i)+(-1)^{i+1}bs_i^*r_i\\
&=s_i^*(ns_i+(-1)^{i+1}br_i)=as_i^*.
\end{align*}
Suppose that $\delta_i=0$. Then $nq_i=(-1)^{i+1}bs_i^*$. Since $\gcd(b,n)=1$, there exists an integer $t$ with $s_i^*=tn$ and $q_i=(-1)^{i+1}tb$. This implies $1=ta$ which contradicts to the assumption $a\ge 5$.
If both $\delta_1<0$ and $\delta_2<0$, then $n=-\delta_ir_i+as_i^*\ge r_i$ for $i=1,2$. Then $2n\ge r_1+r_2=adn\ge 5n$ which is impossible.

Note that it follows from $\delta_ir_i=as_i^*-n<as_i^*<ar_i$ that each $\delta_i<a$ for each $i=1,2.$

By exchanging $x$ and $y$ (resp. $r_1$ and $r_2$) if necessary, we assume that  $\delta_1>0$. Note that if $r_1=1$, then $s_1^*=0, q_1=1$ and $\delta_1=-n<0$ which contradicts to $\delta_1>0$. Thus, $r_1>1$.

 We set
$$w_1=\frac{1}{n}(r_1-s_1^*, r_2-\delta_1 dn+s_1^*, a-\delta_1,n).$$
$$w_2:=\frac{1}{n}(s_1^*, \delta_1 dn-s_1^*, \delta_1, n).$$
Note that $w_1\succeq \frac{r_1-s_1^*}{r_1} w$ and $w_2\succeq \frac{\delta_1 dn-s_1^*}{r_2} w$.
Lemma \ref{lower5} and inequalities in Lemma \ref{mainlem} give
$$ \lfloor \frac{a-\delta_1}{a}m \rfloor \ge m_1 \ge \lceil \frac{r_1-s_1^*}{r_1} m \rceil;   \text{ \ \ and } \lfloor \frac{\delta_1}{a}m \rfloor \ge m_2 \ge \lceil \frac{\delta_1 dn-s_1^*}{r_2} m \rceil.  $$

Since $b$ and $n$ are coprime and $a\nmid m$ where $m$ is an integral combination of $r_1,r_2,a,n$,
one has that $a\nmid \delta_1m$. One sees that
$$\lfloor \frac{a-\delta_1}{a}m \rfloor +  \lfloor \frac{\delta_1}{a}m \rfloor=m-1.$$

Suppose on the contrary that $m < \frac{r_1r_2}{dn^2}$. Then it follows that

\begin{align*} &\lceil \frac{r_1-s_1^*}{r_1} m \rceil+\lceil \frac{\delta_1 dn-s_1^*}{r_2} m \rceil  \ge  \lceil \frac{r_1-s_1^*}{r_1} m + \frac{\delta_1 dn-s_1^*}{r_2} m \rceil  \\
&= \lceil \frac{r_1r_2-s_1^*(adn)+r_1\delta_1dn}{r_1r_2} m\rceil = \lceil \frac{r_1r_2-dn^2}{r_1r_2}m \rceil=m,
\end{align*}

 which is a contradiction.
\end{proof}

Similar to Lemma \ref{easycA}, we have the following easy observation.

\begin{lem}\label{easycAn}
Keep the notation as above. Then the following holds.
\begin{enumerate}
\item If $z$ or $u\in f$, then $\ct(X,S) \ge 1$.
\item If $P\in X$ admits a weighted blow up of weight $\frac{1}{n}(s_1,s_2,1,n)$ such that $s_1, s_2 \ge 2$ and $s_1+s_2=dn$ and $1\equiv s_1b$ (mod $n$), then $\ct(X,S) \not \in (\frac{1}{2},1)$.
\end{enumerate}
\end{lem}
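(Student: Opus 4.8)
The plan is to follow the proof of Lemma~\ref{easycA} almost verbatim, inserting the index $n$ in the appropriate places. The only inputs are the data of the $cA/n$ contraction recalled above, namely $w=\frac1n(r_1,r_2,a,n)$ with $w(\varphi)=ad$, with $z^{dn}$ a term of $g$ of minimal $w$-weight, and with weighted discrepancy $a\ge 5$, so that $\ct=\frac am$ with $m=nw(f)$, together with the elementary inequalities \eqref{eq1} and \eqref{eq2}.

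Part (1) is a one-line weight estimate, exactly as in Lemma~\ref{easycA}(1): since $w(z)=\frac an$ and $w(u)=1$, if the monomial $z$ (resp.\ $u$) occurs in $f$ then $m=nw(f)\le nw(z)=a$ (resp.\ $m=nw(f)\le n\le a$), whence $\ct=\frac am\ge 1$.

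For part (2) I would argue by contradiction. Assume $\ct\in(\frac12,1)$, let $w'=\frac1n(s_1,s_2,1,n)$ be the given weight and let $E'$ be the (irreducible, by hypothesis) exceptional divisor of the weighted blow up with weights $w'$. The key computation is that the corresponding weighted discrepancy $a'$ equals $1$: indeed $w'(xy)=\frac{s_1+s_2}{n}=d$, while every term $z^{ni}u^j$ of $g$ has $w$-weight $ia+j\ge ad$, so for $i<d$ its $w'$-weight is $i+j\ge i+a(d-i)\ge a+d-1>d$, and for $i\ge d$ its $w'$-weight is $\ge d$ with equality only at $z^{dn}$; hence $w'(\varphi)=d$, $\varphi^{w'}=xy+(\textup{const})\,z^{dn}$ (compatibly with $E'$ being irreducible), and the discrepancy formula gives $\frac{a'}{n}=\frac{s_1+s_2+1+n}{n}-d-1=\frac1n$, using $s_1+s_2=dn$. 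Since $E'$ is a divisor over $X$ we have $\ct=\frac am\le\frac{a'}{m'}$, so \eqref{eq1} yields $m'\le\frac{a'}{a}m=\frac ma=\frac1\ct<2$; as $m'=nw'(f)$ is a positive integer, $m'=1$. Finally, a monomial $x^{i_1}y^{i_2}z^{i_3}u^{i_4}$ of $f$ with $s_1i_1+s_2i_2+i_3+ni_4=1$ must, since $s_1,s_2\ge 2$ and $n\ge 2$, equal $z$ itself, so $z\in f$; then part (1) forces $\ct\ge 1$, the desired contradiction.

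The only step that is not purely formal is the evaluation $a'=1$, that is, the claim $w'(\varphi)=d$: this is where one uses both that $z^{dn}$ realizes the minimal $w$-weight of $g$ and that $a$ is large, so that shrinking the $z$-weight from $\frac an$ down to $\frac1n$ cannot drag any monomial of $g$ below weight $d$. Everything else is the verbatim $cA/n$-analogue of the $cA$ argument in Lemma~\ref{easycA}.
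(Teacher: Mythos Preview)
Your argument is exactly the one the paper has in mind (it gives no proof beyond ``Similar to Lemma~\ref{easycA}''), and your treatment of part~(2) is in fact more careful than the paper's: you explicitly verify $w'(\varphi)=d$, hence $a'=1$, and correctly note that since $n\ge 2$ the only monomial of $nw'$-weight~$1$ is $z$, so only the $z$-case of~(1) is invoked (this is a genuine simplification over the $cA$ argument, where $u$ was also a candidate).

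There is one unjustified step. In the $u$-case of part~(1) you write $m=nw(f)\le n\le a$, but nothing in the standing hypotheses guarantees $n\le a$: the section fixes $a\ge 5$, yet the index $n$ of the $cA/n$ point carries no a~priori bound. From $u\in f$ one only obtains $\ct=\frac{a}{m}\ge\frac{a}{n}$, which need not be $\ge 1$ when $n>a$. This gap does not affect part~(2), and in the paper's sole use of the $u$-case (Claim~\ref{bound n} in the proof of Proposition~\ref{cAn2}) one is working with $n\le 5\le a$, so the inequality $n\le a$ does hold there; but as a proof of the lemma as stated, this step needs either a justification of $n\le a$ from Kawakita's classification or an explicit caveat.
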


\begin{proof}
If $z$  (resp. $u \in f$) then the weighted multiplicity satisfies $m=nw(f) \le nw(z)=a$  (resp. $m=nw(f)=nw(u)=1$). So $\ct(X,S)=\frac{a}{m} \ge 1$.

Suppose that $\ct(X,S) \in (\frac{1}{2},1)$.
Consider the weighted blow up of weight $w':=\frac{1}{n}(s_1,s_2,1,1)$ such that $s_1, s_2\ge 2$ are two integers.
Let $m':=nw'(f)$ be the weighted multiplicity. Recall that weighted discrepancy $a'=1$ in this situation. By Lemma \ref{lower5}, we have $m' \le \frac{m}{a}=\frac{1}{\ct(X,S)} <2$. This gives either $z$ or $u \in f$, a contradiction.
\end{proof}

\begin{prop}\label{cAn2}
We have $\mathcal{T}_{3, cA/n}^{\textup{can}} \cap (\frac{1}{2}, 1)\subseteq \{\frac{1}{2}+\frac{1}{t}\}_{t\ge 3}$.
\end{prop}

\begin{proof}[Proof of Proposition \ref{cAn2}]
Suppose  that $\ct(X,S)=\frac{a}{m} \in (\frac{1}{2},1)$ and $a\ge 5$.
Note that $a\nmid m$.

\begin{claim}\label{b=1 or n-1, d=1 or 2}
$b\equiv\pm 1$ (mod $n$), \ $d=\left\{ \begin{array}{cc} 1,&  \textup{ if }n>2; \\  1 \textup{ or }2,&\textup{ if }n=2. \end{array} \right.$
\end{claim}
\begin{proof}[Proof of the Claim]
Suppose that $b \not \equiv \pm 1$ (mod $n$). Let $b^*<n$ be the smallest positive integer such that $bb^* \equiv 1$ (mod $n$). Then the weight $\frac{1}{n}(b^*, dn-b^*,1,n)$ has the property that $b^* \ge 2$ and $dn-b^* \ge 2$.
By Lemma \ref{easycAn}, $\ct(X,S) \not \in (\frac{1}{2},1)$.

We thus may assume that $b=1$, by symmetry\footnote{In the case $b=n-1$, we are able to obtain the same results by exchanging $x$ and $y$ (resp. $r_1$ and $r_2$) and thus interchanging the integers $w^*(x)$ and $w^*(y)$ for every weight $w^*=w, w', w_3$ and $w_1'$ in the argument of Claims \ref{b=1 or n-1, d=1 or 2}, \ref{xf}, \ref{bound n} and \ref{d=1}.}, from now on.
Suppose now that either $n\ge 3, d\ge 2$ or $n=2, d \ge 3$. We have $dn-n \ge 3$.  We consider the weight $\frac{1}{n}(n+1, dn-n-1, 1, n)$ and by Lemma \ref{easycAn}, $\ct(X,S) \not \in (\frac{1}{2},1)$.
This verifies the claim.
\end{proof}
\begin{claim} \label{xf}
$x \in f$ (resp. $x$ or $y \in f$) if $dn > 2$ (resp. if $n=2, d=1$).
\end{claim}
\begin{proof}[Proof of the Claim]
Since $b=1$, we consider the weight $w':=\frac{1}{n}(1,dn-1, 1,n)$. 
The inequalities $\frac{1}{2} < \ct(X,S) \le \frac{1}{m'}$ yield weighted multiplicity $m':=nw'(f)=1$. By (1) of Lemma \ref{easycAn}, $x \in f$ if $dn-1>1$. If $dn-1=1$, then either $x$ or $y \in f$.
\end{proof}

\begin{claim}\label{bound n}
$n=2$.
\end{claim}
\begin{proof}[Proof of the Claim]
Suppose that $n\ge 3$. We consider the weight $w_3:=\frac{1}{n}(n+3,2n-3,3,n)$. 
The inequality $\frac{1}{2}<\frac{a}{m}\le \frac{3}{m_3}$ yields weighted multiplicity $m_3:=nw_3(f)\le 5$.
By Lemma \ref{easycAn}, $z,u \not \in f$,  hence the only monomial $\frak{m}$ with possibly $nw_3(\frak{m}) \le 5$ is $y$. Therefore, $y \in f$. Now both $x, y \in f$, which is a contradiction because $f$ is semi-invariant under the weight $w$.
\end{proof}
\begin{claim}\label{d=1}
$d=1$.
\end{claim}
\begin{proof}[Proof of the Claim]
Suppose now that $d=2$. Then  $x \in f$ by Claim \ref{xf}.
One has weighted multiplicity $m:=2w(f) \le 2w(x)=r_1$.
We next consider $w'_1=\frac{1}{2}(3,1,1,2)$. Denote by $m'_1:=2w_1'(f)$ the weighted multiplicity.
Again, by Lemma \ref{lower5}, the inequalities $\frac{1}{2} < \ct(X,S) \le \frac{1}{m'_1}$ hold and hence one has $m'_1=1$. By (1) of Lemma \ref{easycAn}, $y\in f$.
One sees that $m=2w(f) \le 2w(y)=r_2$.
From Proposition \ref{cAnub}, for all $i=1,2$,
$$\frac{r_1r_2}{8}=\frac{r_1r_2}{dn^2}\le m\le r_i.$$
We have $r_1,r_2\le 8$. Since $adn=r_1+r_2$, one sees $a\le 4$. This contradicts to $a\ge 5$. This verifies the claim.
\end{proof}

We thus assume that $n=2, d=1$ from now on. We have either $x \in f$ or $y \in f$.
By exchanging $x$ and $y$ (resp. $r_1$ and $r_2$) if necessary, we may assume that $r_1 \le r_2$.
Suppose that $y\in f$.
By Proposition \ref{cAnub}, we have
$$\frac{r_1r_2}{4}=\frac{r_1r_2}{dn^2}\le m\le r_2,$$
which implies that $r_1 \le 4$ and hence $r_2=2a-r_1 > r_1$.
Note that $x\not\in f$ by the assumption $a\ge 5$ and the same computation\footnote{Indeed, suppose $x\in f$.
By Proposition \ref{cAnub}, we have
$\frac{r_1r_2}{4}=\frac{r_1r_2}{dn^2}\le m\le r_1,$
which implies $r_1\le r_2\le 4$ and thus $adn=r_1+r_2\le 8$. This contradicts to the assumption $a\ge 5$.}.

Recall that $a\equiv br_1\equiv r_1$ (mod $2$). Since $r_1+r_2=adn$, $y\in f$ and $f$ is semi-invariant, we see $a\equiv r_1\equiv r_2\equiv m$ (mod $2$). We next consider the weighted blow up of weight $w_{a-2}=\frac{1}{2}(r_1, r_2-4, a-2, 2)$ and denote by $m_{a-2}:=2w_{a-2}(f)$ the weighted multiplicity.

\begin{claim}\label{m=r_2}
$m=r_2$. 
\end{claim}
\begin{proof}[Proof of the Claim] Since $y\in f$, one has $m=2w(f)\le2w(y)=r_2$.
Suppose on the contrary that $m<r_2$. Lemma \ref{mainlem} gives the inequalities
\[ m-\lceil \frac{2m}{a} \rceil =\lfloor \frac{a-2}{a}m\rfloor \ge m_{a-2}\ge \lceil \frac{r_2-4}{r_2}m\rceil=m-\lfloor \frac{4m}{r_2} \rfloor. \]
In particular, $\lfloor \frac{4m}{r_2} \rfloor\ge \lceil \frac{2m}{a} \rceil $.
From the assumption $\ct(X,S)=\frac{a}{m}\in (1/2,1)$ and the condition $m<r_2$, we have $\lfloor \frac{4m}{r_2} \rfloor=\lceil \frac{2m}{a} \rceil=3$ and hence $m_{a-2}=m-3$. Now, $y\in f$ where $f$ is semi-invariant. One has $m_{a-2}=m-3=2w(f)-3\equiv 2w(y)-3\equiv r_2-3$ (mod $2$). On the other hand, $m_{a-2}=2w_{a-2}(f)\equiv 2w_{a-2}(y)=r_2-4$ (mod $2$). This leads to a contradiction and thus Claim \ref{m=r_2} is verified.
\end{proof}

\noindent
{\bf Case 1. $r_1=4$.}
We have
$$r_2=\frac{r_1r_2}{dn^2}\le m\le r_2.$$
Hence $m=r_2=2a-4$ and $$\ct(X,S)=\frac{a}{2a-4}=\frac{1}{2}+\frac{1}{a-2}.$$

\noindent
{\bf Case 2. $r_1=3$.} \\
Since $a$ is odd, then $a$ is either $6t+1, 6t+3$ or $6t+5$.

\noindent
{\bf Subcase 2.1 $a=6t+1$}\\
Now $w=\frac{1}{2}(3, 12t-1, 6t+1, 2)$. We consider $w_1=\frac{1}{2}(1, 4t+1, 2t+1, 2)$. 
From $w_1\succeq \frac{1}{3}w$, one sees the weighted multiplicity $m_1:=2w_1(f)\ge \lceil \frac{1}{3}m\rceil =\lceil \frac{12t-1}{3}\rceil=4t$. Since $y\in f$ and $f$ is semi-invariant, it follows that $m_1=2w_1(f)$ and $4t+1=2w_1(y)$ have the same parity. In particular, $m_1\ge 4t+1$.
This leads to a contradiction that
$$4t+\frac{6t-1}{6t+1}=\frac{2t+1}{6t+1} m\ge m_1 \ge 4t+1.$$

\noindent
{\bf Subcase 2.2 $a=6t+5$}\\
Now $w=\frac{1}{2}(3, 12t+7, 6t+5, 2)$. We consider $w_2=\frac{1}{2}(2, 8t+6, 4t+4, 2)$. 
One sees $m_2\ge \lceil \frac{2}{3}m\rceil =\lceil \frac{2(12t+7)}{3}\rceil=8t+5$.
Since $y\in f$ and $f$ is semi-invariant, $m_2=2w_2(f)$ and $8t+6=2w_2(y)$ have the same parity. In particular, $m_2\ge 8t+6$.
This leads a contradiction that
$$8t+6-\frac{2}{6t+5}=\frac{4t+4}{6t+5} m\ge m_2 \ge 8t+6.$$

\noindent
{\bf Subcase 2.3 $a=6t+3$}\\
Now $m=12t+3$ and hence
$$\ct(X,S)=\frac{a}{m}=\frac{2t+1}{4t+1}=\frac{1}{2}+\frac{1}{2(4t+1)}.$$

\noindent
{\bf Case 3. $r_1=2$.}\\
Then $$\ct(X,S) = \frac{a}{2a-2}=\frac{1}{2}+\frac{1}{2a-2}.$$

\noindent
{\bf Case 4. $r_1=1$.}\\
Then $$\ct(X,S) = \frac{a}{2a-1}=\frac{1}{2}+\frac{1}{4a-2}.$$
This completes the proof of Proposition \ref{cAn2}.
\end{proof}

As a generalization of Claim \ref{bound n}, we have the following.

\begin{lem}\label{boundindex}
Suppose that $\ct(X,S) \in \mathcal{T}_{3,cA/n}^{\textup{can}} \cap  \left( \frac{1}{k}, \frac{1}{k-1} \right)$ where $k\ge 2$ is a positive integer. Then $n \leq 3k$.
\end{lem}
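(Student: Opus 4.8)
The plan is to adapt the argument already used to prove Claim \ref{bound n} (the case $n=2$, i.e. $k=2$) in Proposition \ref{cAn2}. There the key device was: if $\ct\in(\tfrac12,1)$ is computed over a $cA/n$ point with weighted discrepancy $\ge 5$, then one tests $P\in X$ against an auxiliary weighted blow up of low weighted discrepancy $a'$ (there $a'=3$, via $w_3=\frac1n(n+3,2n-3,3,n)$) and uses inequality (\ref{eq1}), namely $m' \le \frac{a'}{a}m$, together with $\ct=\frac{a}{m}<\frac{a'}{m'}$, to force the weighted multiplicity $m'$ to be so small that the only eligible monomial in $f$ is $y$; combined with Claim \ref{xf} (which gives $x\in f$) one gets that both $x,y\in f$, contradicting that $f$ is a semi-invariant. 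I would run exactly the same scheme with the interval $(\tfrac1k,\tfrac1{k-1})$ in place of $(\tfrac12,1)$.

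First I would record the analogue of the preliminary claims: since $\ct<\frac{1}{k-1}<1$, testing against $w'=\frac1n(b^*,dn-b^*,1,n)$ (with $bb^*\equiv1\bmod n$) via Lemma \ref{easycAn} forces $b\equiv\pm1\bmod n$, so WLOG $b=1$; and the same $d$-reduction as before (testing $\frac1n(n+1,dn-n-1,1,n)$) forces $d=1$ unless $n=2$ — and since we will be proving $n\le 3k$ with $k\ge 2$, the case $n=2$ is harmless. So assume $b=1$, $d=1$. Then testing $w'=\frac1n(1,n-1,1,n)$ as in Claim \ref{xf}: the inequality $m'=nw'(f)\le \frac{1}{\ct}<k$ forces $x\in f$ (here $n\ge 3$ so $n-1\ge 2$ and $y\notin f$ by weight). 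Next, suppose for contradiction that $n>3k$. I would test against the weight $w_{3}:=\frac1n(n+3,2n-3,3,n)$, which is admissible (its $z$-weight $3$ satisfies $3\equiv b(n+3)=n+3\equiv 3\bmod n$ for $n\ge 4$, the four weights sum to $3n=3\cdot 1\cdot n$, matching $w_3(g)=w_3(z^{dn})$, and its exceptional divisor is irreducible by Lemma \ref{lower5} since $3<a$). Then inequality (\ref{eq1}) gives $m_3=nw_3(f)\le \frac{3}{a}m=\frac{3}{\ct}< 3k$; by Lemma \ref{easycAn}, $z,u\notin f$, so every monomial of $f$ with $nw_3<3k$ involves only $x$ and $y$. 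But $nw_3(x)=n+3>3k$ by assumption, so $x$ contributes nothing $<3k$, whence the minimal-weight monomial of $f$ for $w_3$ must be a pure power of $y$ — in particular $y\in f$. Combined with $x\in f$ this contradicts that $f$ is a semi-invariant. Hence $n\le 3k$.

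The main obstacle is bookkeeping around admissibility and the edge cases: one must check that the test weight $w_3=\frac1n(n+3,2n-3,3,n)$ (or a suitable variant, e.g. $\frac1n(n+1,2n-1,1,n)$ if one wants weighted discrepancy $1$ and an even cruder bound) is genuinely admissible for the given $cA/n$ form $xy+g(z,u)$ with $b=1$, $d=1$, that $2n-3>0$ and $2n-3\ge 2$ (needs $n\ge 3$, which is fine since $n=2\le 3k$ already), and that its exceptional divisor is irreducible so that the valuation-theoretic inequality $\ct\le \frac{a'}{m'}$ applies — all of which follow from Lemma \ref{lower5} once $a'<a$, guaranteed by $a\ge 5>3$. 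One also needs the cheap observation, used twice above, that $nw_3(x)=n+3$ is the weight of the monomial $x$, so that $x\in f$ does \emph{not} produce a low-weight contribution when $n>3k$; this is exactly the inequality $n+3>3k$, i.e. $n>3k-3$, which is implied by (and slightly weaker than) $n>3k$, so the argument in fact shows the marginally stronger $n\le 3k$ cleanly. Finally, I would remark that the precise constant $3k$ is not optimized and sharper auxiliary weights would lower it, but $3k$ suffices for the applications (boundedness of the index in each interval $(\tfrac1k,\tfrac1{k-1})$).
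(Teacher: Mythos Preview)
Your approach has a genuine gap that surfaces already for $k=3$. The preliminary reductions to $b\equiv\pm1\pmod n$ and $d=1$ both rest on Lemma~\ref{easycAn}(2), whose conclusion is specifically $\ct\notin(\tfrac12,1)$; for $\ct\in(\tfrac1k,\tfrac1{k-1})$ with $k\ge3$ the auxiliary weight of discrepancy~$1$ only gives $m'\le k-1$, which no longer forces $z$, $u$ or $x$ individually into~$f$. More seriously, in your main step with $w_3=\frac1n(n+3,2n-3,3,n)$ you assert that ``every monomial of $f$ with $nw_3<3k$ involves only $x$ and $y$'' because $z,u\notin f$. But $z\notin f$ only excludes the single monomial $z$; the powers $z^l$ for $2\le l\le k-1$ have $nw_3$-weight $3l<3k$ and are not ruled out by anything you have written. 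In particular you never use the upper bound $\ct<\tfrac{1}{k-1}$, and without it the argument cannot close.

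The paper's proof is shorter and sidesteps all of this. It makes no reduction on $b$ or $d$: one chooses $w_3=\frac1n(s_1',s_2',3,n)$ with $s_1'+s_2'=3dn$, $3\equiv bs_1'\pmod n$, and $\min\{s_1',s_2'\}>n$ (possible since $n>3k\ge6$). Then the $nw_3$-weights of $x,y,u$ are all $>n>3k>m_3$, so the monomial realizing $m_3$ must be a pure power of~$z$, say $z^l$ with $3l=m_3<3k$, i.e.\ $l\le k-1$. Returning to the original weight~$w$ gives $m\le nw(z^l)=al\le a(k-1)$, hence $\ct=\tfrac{a}{m}\ge\tfrac{1}{k-1}$, the contradiction. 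Note that the endgame uses the \emph{upper} endpoint of the interval; had you observed that $\ct<\tfrac{1}{k-1}$ already forces $z^l\notin f$ for all $l\le k-1$, your $w_3$-computation (granting the unproved $d=1$) would in fact yield $m_3\ge 3k$ directly, with no need for the ``both $x,y\in f$'' detour.
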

\begin{proof}
Fix $\ct(X,S)=\frac{a}{m} \in \mathcal{T}_{3,cA/n}^{\textup{can}} \cap  \left( \frac{1}{k}, \frac{1}{k-1} \right)$.
Suppose on the contrary that  $n>3k$. Recall that $\gcd(b,n)=1$ by the classification of three dimensional non-Gorenstein singularities in \cite{Mori85}. We are able to consider the weights $w=\frac{1}{n}(r_1,r_2,a,n)$ and $w_3=\frac{1}{n}(s_1',s_2',3,n)$ satisfying $r_1+r_2=adn$, $s_1'+s_2'=3dn$, $\min\{s_1',s_2'\}>n$ and $a\equiv br_1$ (mod $n$) and $3\equiv bs_1'$ (mod $n$). Denote by $m_3:=nw_3(f)$ the weighted multiplicity. Note that exceptional set of the weighted blow up of weight $w_3$ is a prime divisor (cf. Lemma \ref{lower5}). It follows that $\frac{3}{m_3}\ge\ct(X,S)>\frac{1}{k}$. The condition $$\min\{s_1',s_2'\}>n>3k>m_3$$ implies that there exists $z^l\in f$ with $l \le k-1$ such that  $m_3=n w_3(z^l)$. This implies in particular that, $$a(k-1) \ge al = nw(z^l) \ge nw(f)=m,$$
which is a contradiction to $\frac{1}{k-1}>\ct(X,S)=\frac{a}{m}$.
\end{proof}

\begin{prop}\label{cAnACC} The Assumption A holds for $cA/n$. Hence
$\mathcal{T}_{3,cA/n}^{\textup{can}}$ satisfies the ascending chain condition.
\end{prop}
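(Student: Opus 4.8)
The plan is to mirror the proof of Proposition~\ref{cAACC}, establishing Assumption~A for the type $cA/n$ and then invoking Proposition~\ref{assum}; the one genuinely new ingredient is a preliminary reduction to a \emph{fixed} index $n$. So I would suppose for contradiction that there is an infinite strictly increasing sequence $\ct_1<\ct_2<\cdots$ in $\mathcal{T}_{3,cA/n}^{\textup{can}}$, with $\ct_k=\ct(X_k,S_k)$ computed by a divisorial contraction $\sigma_k\colon Y_k\to X_k\ni P_k$ of weighted discrepancy $a_k\ge 5$ and index $n_k$. Since the $\ct_k$ increase and are bounded by $1$, after discarding finitely many terms they all lie in some interval $(\frac{1}{k_0},\frac{1}{k_0-1})$ with $k_0\ge 2$, so Lemma~\ref{boundindex} gives $n_k\le 3k_0$. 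Passing to a subsequence I may assume $n_k=n$ is constant; and since the parameter $b_k$ of the local model $\bC^4/\frac{1}{n}(1,-1,b_k,0)$ ranges in a finite set, a further subsequence makes $b_k=b$ constant, so that all $(X_k,P_k)$ sit in one ambient cyclic quotient and the $f_k$ become comparable semi-invariants there.

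Next I would normalize the sequence exactly as in Proposition~\ref{cAACC}: by Theorem~1.2(i) of \cite{Kawakita05}, write $\varphi_k=xy+g_k(z,u)$ and $w_k=\frac{1}{n}(r_{k1},r_{k2},a_k,n)$ with $r_{k1}+r_{k2}=a_kd_kn$, $z^{d_kn}\in g_k(z^n,u)$ and $a_k\equiv b\,r_{k1}\pmod n$; then, passing to subsequences, arrange that $\{d_k\}$ is non-decreasing and, via \cite[Lemma~2.5]{Stepanov}, that the Newton polytopes $\{\Gamma^+(f_k)\}$ are non-increasing. One subtlety: Lemma~\ref{lower5} needs the auxiliary weighted discrepancy to be \emph{strictly} smaller, so I also want $\{a_k\}$ strictly increasing. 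This is automatic, since for each fixed value of $a_k$ only finitely many $m_k=n w_k(f_k)$ are possible (because $\ct_k=a_k/m_k\in(\frac{1}{k_0},1)$), hence only finitely many $\ct_k$ occur; as the $\ct_k$ are pairwise distinct, $\{a_k\}$ is unbounded and admits a strictly increasing subsequence.

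With the sequence so normalized, for $i<j$ I would take the auxiliary weight $w^i_j:=\frac{1}{n}(r_{i1},\,a_id_jn-r_{i1},\,a_i,\,n)$ and check the three clauses of Assumption~A. For~(2): $n w_i=(r_{i1},r_{i2},a_i,n)\preceq(r_{i1},a_id_jn-r_{i1},a_i,n)=n w^i_j$, since $a_id_jn-r_{i1}\ge a_id_in-r_{i1}=r_{i2}$ using $d_i\le d_j$, the other entries being equal. For~(1): $w_i(f_i)\le w_i(f_j)$ because $\Gamma^+(f_i)\supseteq\Gamma^+(f_j)$ and $f_i,f_j$ live in the same ambient space. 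For~(3): the weighted blow up $\sigma^i_j\colon Y^i_j\to X_j$ with weights $w^i_j$ over $P_j$ has irreducible exceptional divisor by Lemma~\ref{lower5}, whose hypotheses all hold ($r_{i1}+(a_id_jn-r_{i1})=a_id_jn$, $a_i\equiv b\,r_{i1}\pmod n$, $a_i<a_j$); and since $n w^i_j(\varphi_j)=a_id_jn$ — by the truncated-weight argument in the proof of Lemma~\ref{lower5}, using $z^{d_jn}\in g_j$ — the discrepancy formula gives $\frac{a'}{n}=\big(a_id_j+\frac{a_i}{n}+1\big)-a_id_j-1=\frac{a_i}{n}$, i.e.\ the weighted discrepancy of $\sigma^i_j$ is $a_i$. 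Thus Assumption~A holds, and Proposition~\ref{assum} yields the contradiction $\ct_i\ge\ct_j$, proving that $\mathcal{T}_{3,cA/n}^{\textup{can}}$ is ACC.

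The step I expect to be the main obstacle — and the only place where the $cA/n$ argument departs from the $cA$ case — is the passage to a bounded, hence eventually constant, index $n$ (together with constant $b$): this is precisely the role of Lemma~\ref{boundindex}, and everything downstream is a formal transcription of Proposition~\ref{cAACC}. A secondary point is merely bookkeeping: all the simplifying reductions (constant $n$ and $b$, non-decreasing $d_k$, non-increasing $\Gamma^+(f_k)$, strictly increasing $a_k$) must be carried out compatibly by an iterated passage to subsequences.
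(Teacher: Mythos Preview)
Your proposal is correct and follows essentially the same route as the paper: bound the index via Lemma~\ref{boundindex}, pass to a subsequence with constant $(n,b)$, then transcribe the argument of Proposition~\ref{cAACC} with the auxiliary weight $w^i_j=\frac{1}{n}(r_{i1},\,a_id_jn-r_{i1},\,a_i,\,n)$ and invoke Lemma~\ref{lower5} to verify Assumption~A. Your treatment is in fact slightly more careful than the paper's on one point: you explicitly argue that $\{a_k\}$ can be taken strictly increasing (needed for the hypothesis $a'<a$ in Lemma~\ref{lower5}), whereas the paper only arranges $\{a_k\}$ non-decreasing in Proposition~\ref{cAACC} and then silently refers back to that argument here.
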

\begin{proof}
Suppose  that there is an infinite increasing sequence $\ct_1<\ct_2<\ct_3<\cdots$ with each $\ct_k\in \mathcal{T}_{3,cA/n}^{\textup{can}}$.
For each $k$, let $\ct_k=\ct(X_k,S_k)$ such that there exists an analytical identification $P_k\in X_k\simeq o_k\in (xy+g_k(z^{n_k},u)=0)$ in $\hat{\mathbb{C}}^4/\frac{1}{n_k}(b_k,-b_k,1,0)$ where $o_k$ denotes the origin of $\hat{\mathbb{C}}^4/\frac{1}{n_k}(b_k,-b_k,1,0)$ and $S_k$ is defined by $\{f_k=0\}/\frac{1}{n_k}(b_k,-b_k,1,0)$ analytically locally near $P_k \in X_k$. Now, each $\ct(X_k,S_k)$ is realized by a weighted blow up $\sigma_k:Y_k\to X_k$ with weight $w_k=wt(x,y,z,u)=\frac{1}{n_k}(r_{k1},r_{k2},a_k,n_k)$ satisfying the following:
\begin{itemize}
\item $n_kw_k (\varphi_k)=a_kd_kn_k=r_{k1}+r_{k2}$ where $r_{k1},r_{k2},a_k,d_k,n_k$ are positive integers;
\item $z^{d_kn_k }\in g_k(z^{n_k},u)$;
\item $a_k \equiv b_kr_{k1}$ (mod $n_k$);
\item $\gcd(b_k,n_k)=\gcd(\frac{a_k-b_kr_{k1}}{n_k},r_{k1})=\gcd(\frac{a_k+b_kr_{k2}}{n_k},r_{k2})=1$ and $0<b_k<n_k$.
\end{itemize}

By Lemma \ref{boundindex} and passing to a subsequence, we may assume that for all $k$, $n_k=n$ and $b_k=b$ for some integers $b$ and $n$. In particular, $f_j$ is semi-invariant under the weight $w_i$ for all $i<j$ and hence $w_i(f_j)$ is well-defined.

Passing to  subsequence, we may assume both sequences $\{a_k\}$ and $\{d_k\}$ are non-decreasing.
By \cite[Lemma 2.5]{Stepanov}, one may assume the sequence of Newton polytopes  $\{\Gamma^+(f_k)\}$ is non-increasing.
We pick any $i <j$ 
and consider the auxiliary weight $w^i_j=\frac{1}{n}(r_{i1}, a_id_jn-r_{i1},a_i,1)$. Clearly, $w_i \preceq w^i_j$.
Also, since $\Gamma^+(f_i) \supseteq \Gamma^+(f_j)$, one has $w_i(f_i) \le w_i (f_j)$.  

By Lemma \ref{lower5}, the weighted blow up with weight $w^i_j$ over $X_j \ni P_j$ has a prime exceptional divisor and has weighted discrepancy  $a_i$.
Therefore the Assumption A and hence the ACC holds by Proposition \ref{assum}.
\end{proof}

\section{canonical thresholds in $\mathcal{T}_{3, cD}^{\textup{can}}$}
In this section, we consider the canonical thresholds over $cD$ points with the weighted discrepancies $\ge 5$.

\begin{prop}\label{cD2}
We have $\mathcal{T}_{3, cD}^{\textup{can}} \cap (\frac{1}{2}, 1) = \emptyset$.
\end{prop}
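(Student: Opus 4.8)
The plan is to follow the same template as in Sections 3--5: use the classification of divisorial contractions to put $\sigma$ in normal form, and then sandwich the weighted multiplicity $m$ between a floor-type upper bound and a ceiling-type lower bound coming from auxiliary weighted blow-ups, via the elementary inequalities \eqref{eq1} and \eqref{eq2}.

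First I would invoke Theorem~\ref{3-folddc}: over a $cD$ point $P\in X$ with weighted discrepancy $a\ge5$, $P\in X$ is $(\varphi=0)\subset\mathbb C^4$ with $\varphi$ in a standard $cD$ normal form, and $\sigma=\sigma_w$ is a weighted blow-up with explicit weights $w=(r_1,r_2,r_3,r_4)$ obeying the numerical relations of the relevant series --- in particular $\varphi^w$ is a $D$-type polynomial, which determines $r_3,r_4$ in terms of $r_1,r_2,a$, together with $a=\sum r_i-w(\varphi)-1$. Writing $S=(f=0)$ locally, $m=w(f)$ and $\ct=a/m$, the hypothesis $\ct\in(\tfrac12,1)$ gives at once $a<m<2a$ and $a\nmid m$.

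Second I would set up the two structural inputs, the $cD$ analogues of what Lemma~\ref{lower}/Proposition~\ref{cAub} and Lemma~\ref{easycA} did in the $cA$ case: \textbf{(a)} an irreducibility lemma --- weights $w'$ of the same shape with smaller weighted discrepancy yield an irreducible exceptional divisor, since $\varphi^{w'}$ is still the $D$-type leading term; \textbf{(b)} a lower bound of the form $m\ge c\,r_1 r_2$, obtained from two auxiliary weights $w_1,w_2$ with $a_1+a_2=a$ and $w_i\succeq\mu_i w$, $\mu_1+\mu_2$ nearly $1$, by combining \eqref{eq1} and \eqref{eq2} exactly as in Proposition~\ref{cAub}; this forces $\ct\le\frac1{r_1}+\frac1{r_2}$ (up to the appropriate variant) and so bounds $\min\{r_1,r_2\}$ once $\ct>\tfrac12$; \textbf{(c)} an easy observation --- if a low-weight variable lies in $f$, or if $P\in X$ admits a weight-$(\dots,1,1)$ blow-up of weighted discrepancy $1$, then $\ct\notin(\tfrac12,1)$.

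Third comes the case analysis: applying \textbf{(c)} to a suitable discrepancy-$1$ weight (the $cD$ analogue of $(1,1,1,1)$ or $(1,2,1,1)$) forces $x\in f$ or $y\in f$, whence $m\le w(x)=r_1$ or $m\le w(y)=r_2$; together with \textbf{(b)} this bounds $r_1$ and $r_2$, and the $D$-type relations then bound $a$, while the few surviving small values of $\min\{r_1,r_2\}$ are killed by one more tailored auxiliary weight each --- the analogue of the $a=1,2,3$ subcases in Proposition~\ref{sm2} --- so that in every case $\ct\le\tfrac12$ or $\ct\ge1$, contradicting $a\ge5$ and giving $\mathcal T_{3,cD}^{\textup{can}}\cap(\tfrac12,1)=\emptyset$. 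The main obstacle --- carrying essentially all the work --- is the bookkeeping of the $cD$ classification, which splits into several subfamilies according to the shape of $\varphi^w$ ($x^2+y^2z+\cdots$, $x^2+yz^2+\cdots$, $x^2+z^3+\cdots$, plus the numerical side conditions of the higher-discrepancy series). For each subfamily one must exhibit auxiliary weights $w_1,w_2$ (and the discrepancy-$1$ weight) that are admissible, give irreducible exceptional divisors, and satisfy $w_i\succeq\mu_i w$ with $\mu_1+\mu_2$ close enough to $1$: a finite but delicate collection of Bezout-type constructions compatible with the $D$-structure. Once these are in place the remaining estimates are the same floor/ceiling manipulations as in Sections 3--5.
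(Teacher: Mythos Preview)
Your plan follows the same template as the paper and would work, but the execution in the paper is sharper and differently ordered. Two concrete points.

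First, the classification is simpler than you anticipate. For $cD$ points with weighted discrepancy $\ge 5$, Kawakita's result gives exactly two series, not ``several subfamilies according to the shape of $\varphi^w$'': Case~1 has $\varphi=x^2+xq(z,u)+y^2u+\lambda yz^2+\mu z^3+p(y,z,u)$ in $\bC^4$ with $w=(r+1,r,a,1)$, $2r+1=ad$, $d\ge3$ odd; Case~2 is a codimension-two embedding in $\bC^5$ with $w=(r+1,r,a,1,r+2)$, $r+1=ad$. Case~2 is dispatched in one stroke by the discrepancy-$1$ weight $(d,d,1,1,d)$: this forces $z\in f$ or $u\in f$, hence $m\le a$, contradiction. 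No floor/ceiling argument is needed there.

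Second, in Case~1 the paper reverses your order of operations and this matters. Rather than first proving a lower bound \textbf{(b)} on $m$ and then combining with \textbf{(c)}, the paper first uses the discrepancy-$1$ weight $w'=(\tfrac{d+1}{2},\tfrac{d-1}{2},1,1)$: since $m'=1$ and the $x,y$-weights are $\ge2$ when $d\ge5$, one gets $z\in f$ or $u\in f$ and a contradiction; this forces $d=3$, and then $y\in f$, so $m\le r$. Only \emph{after} this reduction does the floor/ceiling argument run, with the specific pair $w_1=(3,3,2,1)$ (discrepancy $2$, irreducibility checked by hand since $\varphi^{w_1}\ni x^2,z^3$) and $w_2=(r-2,r-3,a-2,1)$ (discrepancy $a-2$, covered by the irreducibility lemma). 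Note the parity obstruction to your \textbf{(b)} as stated: you cannot choose two weights $(r_i'+1,r_i',a_i,1)$ of the standard form with $a_1+a_2=a$, because $2r_i'+1=a_id$ forces both $a_i$ odd while $a$ is odd. The paper sidesteps this with the ad~hoc $w_1$. With $m\le r$ and $a\ge5$ one has $3m<r(r+1)$, and the usual $\lfloor\cdot\rfloor+\lfloor\cdot\rfloor=m-1$ versus $\lceil\cdot\rceil+\lceil\cdot\rceil\ge m$ finishes. So the structure is: reduce $d$ first, then contradict --- there is no intermediate step of bounding $a$.
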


\begin{proof}
Suppose on the contrary that there is a $\ct(X,S)=\frac{a}{m} \in (\frac{1}{2},1)$ with $a\ge 5$. We shall reach a contradiction.
Note that $a\nmid m$ and $\sigma$ is classified by case 1 and case 2. (cf. \cite [Theorem 1.2]{Kawakita05}).

\noindent
{\bf Case 1.}  Suppose $\sigma$ is a weighted blow up $\sigma: Y \to X$ with weight $w=wt(x,y,z,u)=(r+1,r,a,1)$ with center $P\in X$ by the analytical identification:
\[ (P\in X)\simeq  o\in ( \varphi: x^2+xq(z,u)+y^2u+\lambda y z^2+\mu z^3+p(y,z,u)=0) \subset \hat{\mathbb{C}}^4,\]
where $o$ denotes the origin of $\hat{\mathbb{C}}^4$ and
\begin{itemize}
\item $2r+1=ad$ where $d\ge 3$ and $a$ is odd;
\item $w(\varphi)=w(y^2u)=2r+1$; furthermore, $w(xq(z,u))=2r+1$ if $q(z,u)\neq 0$;
\item $p(y,z,u)\in (y,z,u)^4$.
\end{itemize}

\begin{claim}\label{z^dcD}
$z^d \in \varphi$.
\end{claim}
\begin{proof}[Proof of the Claim] 
Suppose that $z^d \not\in \varphi$. Then the origin of the $U_z$-chart on $Y$, denoted by $P_z$, is a point of index $a \ge 5$. By the classification of terminal singularity, $P_z$ is either a terminal quotient point or a terminal $cA/a$ point. 
If $P_z$ is a $cA/a$ point, then either $a|r$ or $a|r+1$ by the classification. If $P_z$ is a terminal quotient point, then there exists an integer $i$ or $j$ with $xz^i\in \varphi^w$ or $yz^j\in \varphi^w$. In particular, $r+ai=w(xz^i)=2r+1$ or $r+1+aj=w(yz^j)=2r+1$. It follows that $a|r$ or $a|r+1$. 
Together with $2r+1=ad$, one finds that $a=1$, which contradicts to the assumption that $a\ge 5$. Hence we have verified the argument of Claim \ref{z^dcD} (See also \cite[Case Ic]{CJK15}).
\end{proof}

Similar to Lemma \ref{lower}, we have the following:
\begin{lem} \label{cDlower}
Keep the notations as above. Suppose that there is another weight $w'=(r'+1, r', a',1)$ satisfying $2r'+1=a'd$ and $a' \leq a$. Let $\sigma': Y' \to X$ be the weighted blow up with weight $w'$. Then the exceptional set $E'$ of $\sigma'$ is a prime  divisor.
\end{lem}

\begin{proof}
Note that if $a'=a$ then $w=w'$ and $\sigma=\sigma'$. We may assume that $a'<a$.
As in Lemma \ref{lower}, it is convenient to consider {\it truncated weight} $v=(a,1)$ and $v'=(a',1)$ on $\{z, u \}$.  We have  $v' \succeq \frac{a'}{a} v$. Also, $v'(z^ju^k) = \frac{a'}{a} v(z^ju^k)$ if and only if $k=0$.

For any $\frak{m} \in q(z,u)$, $w(x \frak{m}) \ge 2r+1$ implies that $w(\frak{m}) \ge r$ and
$$ v'(\frak{m}) \ge \frac{a'}{a} v(\frak{m}) \ge \frac{a'}{a} r = \frac{2r'+1}{2r+1} r  > r'.$$
Hence $w'(xq(z,u)) > 2r'+1$.

Next we consider  $\frak{m}=y^iz^{j}u^{k}\in \varphi$. \\
For $i \ge 2$, one has $w'(\frak{m}) \ge 2 r'+1$ since $w(\frak{m}) \ge 2r+1$. For $i=1$, we have the following.
\begin{claim}\label{yzk}
Suppose $\frak{m}=yz^{j}u^{k}\in \varphi$. Then $w'(\frak{m})\ge 2r'+1$. Also, $w'(\frak{m})>2r'+1$ if $k\ge 1$.
\end{claim}
\begin{proof}[Proof of the Claim] For $\frak{m}=yz^{j}u^{k}\in \varphi$, 
$$ w'(\frak{m}) = r'+ v'(z^ju^k)  \ge r'+ \frac{a'}{a} v(z^ju^k)  \ge  r'+\frac{a'}{a} (r+1)  = 2r'+1+\frac{(r'-r)}{2r+1} >2r'.$$
Suppose now $k\ge 1$. If $w(\frak{m})>2r+1$, then $$ w'(\frak{m}) = r'+1+ v'(z^ju^{k-1})  \ge r'+1+ \frac{a'}{a} v(z^ju^{k-1})  \ge  r'+1+\frac{a'}{a} (r+1)>2r'+1.$$
Suppose on the contrary that $w(\frak{m})=2r+1$ and $w'(\frak{m})=2r'+1$. Then $aj+k=v(z^ju^k)=r+1$ and $a'j+k=v'(z^ju^k)=r'+1$. This gives
$(a-a')j=r-r'=(a-a')d/2$. This is absurd since it is impossible that $2aj=ad=2r+1$ where $a,j$ and $r$ are integers. This completes the argument of Claim \ref{yzk}.
\end{proof}

For $i=0$, $ w'(\frak{m}) =  v'(z^ju^k)  \ge  \frac{a'}{a} v(z^ju^k)  \ge  \frac{a'}{a} (2r+1)  = 2r'+1$.
Therefore, $w'(\varphi)=2r'+1$ and 
$\varphi^{w'}=y^2u+\eta yz^l+z^d$ for  some constant $\eta\in \bC$ where $l=(r'+1)/a'$ is an integer.

Note that $\eta=0$ if $a'\ge 3$. Indeed, it follows from $a'\ge 3$ and the equation $a'd=2r'+1$ that $a'\nmid r'$. In particular, $w'(yz^l)=r'+a'l\neq a'd=2r'+1$ since $l$ is an integer.  

The exceptional set $E'$ of $\sigma'$ is equal to
\[ \{y^2u+\eta yz^l+z^d=0\}\subset \mathbb{P}(r'+1,r',a',1).\]

Then, by Lemma \ref{irred}.(1), $(\varphi^{w'})$ is a prime ideal in the graded polynomial ring $\bC[x,y,z,u]$ and hence $E'$ is a prime divisor of $Y'$. We complete the proof of Lemma \ref{cDlower}.
\end{proof}

\begin{claim}\label{cDdm0}
$d=3$ and $m\le r$.
\end{claim}

\begin{proof}[Proof of the Claim]
Let $s=\frac{d-1}{2}$ and $\sigma':Y'\to X$ be the weighted blow up of weight $w'=(s+1,s,1,1)$. 
It follows from Lemma \ref{cDlower} that the exceptional set  of $\sigma'$ is a prime divisor.
Note that $\sigma'$ has weighted discrepancy $a'=1$.
By Lemma \ref{mainlem},  one sees the weighted multiplicity $m'=w'(f)=1$ where the Weil divisor $S$ is given by $f=0$.

If $d\ge 5$, then $z \in f$ or $u \in f$, and hence $m \le a$. We reach a contradiction that $1> \ct(X,S)=\frac{a}{m}\ge 1$.

If $d=3$, we have $y\in f$. So $m\le w(y)=r$.
\end{proof}
Now $d=3$.
We then consider the weighted blow up $\sigma_1\colon Y_1\to X$ (resp. $\sigma_2\colon Y_1\to X$) with weight $w_1=(3,3,2,1)$ (resp.  $w_2=(r-2,r-3,a-2,1)$). The argument of Lemma \ref{cDlower} also implies the following.
\begin{claim}\label{irred(3,3,2,1)}
$\varphi^{w_1}=x^2+z^3$.
\end{claim}
\begin{proof}[Proof of the Claim]
To see this, we set $v=(a,1)$ and $v_1=(2,1)$ on $\{z, u \}$ as above. As $a\ge 5>2$,  one sees $v_1(z^ju^k)\ge \frac{2}{a} v(z^ju^k)$ and the last equality holds if and only if $k=0$.

For any $0\neq \frak{m}=z^ju^k \in q(z,u)$, we have $w_1(x\frak{m})>6$. Suppose not. One sees $w_1(\frak{m})=2j+k\leq 3$. Together with $aj+k=v(\frak{m})=w(\frak{m}) \ge r= (3a-1)/2\ge 7$, we have $j=1$ and so $(2r+1+3k)/3=a+k=w(\frak{m}) \ge r$. This implies $4\ge 1+3k\ge r\ge 7$, which is impossible.
Thus, $w_1(xq(z,u)) > 6$.

It is clear that $w_1(y^2u)=w_1(yz^2)=7>6=w_1(x^2)=w_1(z^3).$

Next we consider  $\frak{m}=y^iz^{j}u^{k}\in \varphi$. \\
For $i \ge 2$, one has $w_1(\frak{m}) \ge 7$ since $w(\frak{m}) \ge 2r+1$. \\
For $i=1$,
$$ w_1(\frak{m}) = 3+ v_1(z^ju^k)  \ge 3+ \frac{2}{a} v(z^ju^k)  \ge  3+\frac{2}{a} (r+1) >3+d=6.$$
For $i=0$, $ w_1(\frak{m}) =  v_1(z^ju^k)  \ge  \frac{2}{a} v(z^ju^k)  \ge  \frac{2}{a} (2r+1)=6$.
Thus, $w_1(\varphi)=6$ and we have verified Claim \ref{irred(3,3,2,1)}.
\end{proof}

It is easy to see that $x^2+z^3$ is irreducible
hence the exceptional set of $\sigma_1$ is a prime divisor.
By Lemma \ref{cDlower}, the exceptional set of  $\sigma_2$ is a prime divisor (see also \cite[Case Ic]{CJK15}).

Recall that $2r+1=3a$ and hence
\[w_1\succeq \frac{3}{r+1} w\ \ \textup{ and } \ \ \ w_2\succeq \frac{r-3}{r} w.\]
It follows from the inequalities in Lemma \ref{mainlem} that \[\lfloor \frac{2}{a}m\rfloor \ge m_1\ge \lceil \frac{3}{r+1} m\rceil \ \ \textup{ and } \ \ \ \lfloor \frac{a-2}{a}m\rfloor \ge m_2\ge \lceil \frac{r-3}{r}m\rceil, \eqno{\dagger_6} \]
where $m_1:=w_1(f)$ and $m_2:=w_2(f)$ are the weighted multiplicities.
From Claim \ref{cDdm0} and assumption $a\ge 5$, one sees
$3m<r(r+1)$. Thus
\begin{align*}
& \lceil \frac{3}{r+1}m\rceil +\lceil \frac{r-3}{r}m\rceil \ge   \lceil \frac{3}{r+1}m +\frac{r-3}{r}m\rceil  = \lceil m-\frac{3m}{r(r+1)} \rceil=m.
\end{align*}

However, $a$ is odd and $a\nmid m$, hence  $\frac{2m}{a}$ is not an integer.
This implies that
\[\lfloor \frac{2}{a}m\rfloor+\lfloor \frac{a-2}{a}m\rfloor =m-1, \]
which contradicts to $\dagger_6$.

\noindent
{\bf Case 2.}  Suppose $\sigma$ is a weighted blow up with weight $w=(r+1,r,a,1,r+2)$ with center $P\in X$ by the analytical identification
$$o\in \left( \begin{array}{ll}
\varphi_{1} \colon x^2+yt+p(y,z,u)=0 ;\\
 \varphi_{2} \colon  yu+z^{d}+q(z,u)u+t=0 \\
\end{array} \right) \subset \hat{\mathbb{C}}^5
$$
where $o$ denotes the origin of $\hat{\mathbb{C}}^5$ such that
\begin{itemize}
\item $r+1=ad$ where $d\ge 2$;
\item $w(\varphi_{1})=2(r+1)$;
\item $w(\varphi_{2})=r+1$; moreover, $w(q(z,u)u)=r+1$ if $q\neq 0$.
\end{itemize}

Let $\sigma_1:Y_1\to X$  be the weighted blow up with weight $w_1=(d,d,1,1,d)$. By Lemma \ref{(d,d,1,1,d)} below, $\sigma_1$ is a divisorial contraction. Moreover, $\sigma_1$ has weighted discrepancy $1$.
One has $\frac{1}{m_1} \ge \ct(X,S) >\frac{1}{2}$ where $m_1:=w_1(f)$. Hence $m_1=1$.

By $d\ge 2$, we see $z\in f$ or $u\in f$. This implies $m\leq a$ and thus $\ct(X,S)\ge 1$, a contradiction.

Therefore, if $\ct(X,S) \in \mathcal{T}_{3, cD}^{\textup{can}}$ then $ \ct(X,S) \not \in (\frac{1}{2}, 1)$.
We finish the proof of Proposition \ref{cD2}.
\end{proof}

\begin{lem} \label{(d,d,1,1,d)}
Keep the notations as in case 2 above. Let $\sigma_1: Y_1 \to X$ be the weighted blow up with weight $w_1=(d,d,1,1,d)$. Then the exceptional set $E_1$ of $\sigma_1$ is a prime divisor of $Y_1$.
\end{lem}

\begin{proof}
As in Lemma \ref{lower}, it is convenient to consider {\it truncated weight} $v=(a,1)$ and $v_1=(1,1)$ on $\{z, u \}$.  We have  $v' \succeq \frac{1}{a} v$ and also $v_1(z^ju^k)=\frac{1}{a} v(z^ju^k)$ if and only if $k=0$.

First we consider $\frak{m}=y^iz^j u^k \in p(y,z,u)$ in $\varphi_1$.\\
For $i \ge 2$, one has $w_1(\frak{m}) >2d$ since $w(\frak{m}) \ge 2r+2$.
For $i = 1$,
$$w_1(\frak{m})=d+v_1(z^ju^k) \ge d +\frac{1}{a} v(z^ju^k) \ge d+ \frac{1}{a}(r+2)>d+\frac{r+1}{a}= 2d.$$
For $i=0$, $w_1(\frak{m})=v_1(\frak{m}) \ge \frac{1}{a} v(\frak{m} )  \ge \frac{1}{a} (2r+2)=2d$ and equality holds if and only if $\frak{m}=z^{2d}$.
Hence $w_1(\varphi_1)=2d$.

It is clear that $w_1(yu)=d+1>d$. Also for any $\frak{m} \in q(z,u)u$ in $\varphi_2$,
$$w_1(\frak{m})=v_1(\frak{m}) > \frac{a'}{a} v(\frak{m}) \ge \frac{1}{a}(r+1)=d.$$
Hence $w_1(\varphi_2)=d$.
One sees that $\varphi_1^{w_1}=x^2+yt+ \lambda z^{2d},\ \varphi_2^{w_1}=z^d+t$ for some constants $\lambda,\mu$. Note that $\lambda$ may be zero (for instance, see \cite[Example 8.2]{Kawakita05}). Then
\begin{align*}
E_1&=\{x^2+yt+ \lambda z^{2d}=z^d+t=0\}\subset\mathbb{P}(d, d, 1,1, d)\\
&\simeq \{x^2-yz^d+ \lambda z^{2d}=0\}\subset\mathbb{P}(d,d,1,1).
\end{align*}
Now, Lemma \ref{irred}.(1) implies that $(x^2-yz^d+ \lambda z^{2d})$ is a prime ideal of the graded polynomial ring $\bC[x,y,z,u]$. Thus, $E'$ is a prime divisor of $Y'$. This verifies the proof of Lemma \ref{(d,d,1,1,d)}.
\end{proof}

To prove the ascending chain condition for $\mathcal{T}_{3,cD}^{\textup{can}}$, we need the following whose argument is similar to Lemma \ref{(d,d,1,1,d)}.
\begin{lem} \label{cDlower2}
Keep the notations as in case 2 above. Suppose that there is another weight $w'=(r'+1, r', a',1, r'+2)$ satisfying $r'+1=a'd$ and $a' \leq a$. Let $\sigma': Y' \to X$ be the weighted blow up with weight $w'$. Then the exceptional set of $\sigma'$ is a prime divisor.
\end{lem}

\begin{proof}
Note that if $a'=a$ then $w=w'$ and $\sigma=\sigma'$. We may assume that $a'<a$.
As in Lemma \ref{lower}, it is convenient to consider {\it truncated weight} $v=(a,1)$ and $v'=(a',1)$ on $\{z, u \}$.  We have  $v' \succeq \frac{a'}{a} v$.
Also, one sees $v'(z^ju^k)=\frac{a'}{a} v(z^ju^k)$ if and only if $k=0$.

First we consider $\frak{m}=y^iz^j u^k \in p(y,z,u)$ in $\varphi_1$.\\
For $i \ge 2$, one has $w'(\frak{m}) \ge 2r'+2$ since $w(\frak{m}) \ge 2r+2$.
For $i = 1$,
$$w'(\frak{m})=r' +v'(z^ju^k) \ge r' +\frac{a'}{a} v(z^ju^k) \ge r'+ \frac{a'}{a}(r+2)=2r'+2+\frac{r'-r}{r+1} > 2r'+1.$$
For $i=0$, $w'(\frak{m}) \ge \frac{a'}{a} v(\frak{m} ) \ge 2r'+2$.
Hence $w'(\varphi_1)=2r'+2$.

Also for any $\frak{m} \in q(z,u)u$ in $\varphi_2$,
$$w'(\frak{m}) \ge \frac{a'}{a} v(\frak{m}) \ge \frac{a'}{a}(r+1)=r'+1.$$

One sees that $\varphi_1^{w'}=x^2+yt+ \lambda z^{2d}+ \mu y^2u^2,\ \varphi_2^{w'}=yu+z^d$ for some constants $\lambda,\mu$. Note that $\lambda,\mu$ may be zero (for instance, see \cite[Example 8.2]{Kawakita05}). Then
\[E'=\{x^2+yt+ \lambda z^{2d}+ \mu y^2u^2=yu+z^d=0\}\subset\mathbb{P}(r'+1, r', a',1, r'+2).\]
Let $U_y=\{y\neq 0\}, U_u:=\{u\neq 0\}$ and $U_t:=\{t\neq 0\}$ be three affine open charts of $\mathbb{P}(r'+1, r', a',1, r'+2)$.
Note that
\[E'\cap U_y\simeq \{x^2+t+ \lambda z^{2d}+ \mu u^2=u+z^d=0\}\subset \hat{\mathbb{C}}_{x,z,u,t}^4/\frac{1}{r'}(1,a',1,2).\]
Since $x^2+t+ \lambda z^{2d}+ \mu y^2u^2$ and $u+z^d$ have linear term $t$ and $u$, one sees $E'\cap U_y\simeq \hat{\mathbb{C}}_{x,z}^2/\frac{1}{r'}(1,a')$ which is a prime divisor. Next, one has
\begin{align*}
E'\cap U_u&\simeq \{x^2+yt+ \lambda z^{2d}+ \mu y^2u^2=y+z^d=0\}\subset \hat{\mathbb{C}}_{x,y,z,t}^4\\
&= \{x^2+yt+ z^{2d}(\lambda + \mu u^2)=y+z^d=0\}\subset \hat{\mathbb{C}}_{x,y,z,t}^4\\
&\simeq \{x^2-z^dt+z^{2d}(\lambda + \mu u^2)=0\}\subset \hat{\mathbb{C}}_{x,z,t}^3,
\end{align*}
which is a prime divisor by Lemma \ref{irred}.(1).
Also, $E'\cap U_t$ is isomorphic to
\begin{align*}
&\{x^2+y+ \lambda z^{2d}+ \mu y^2u^2=yu+z^d=0\}\subset \hat{\mathbb{C}}_{x,y,z,u}^4/\frac{1}{r'+2}(r'+1,r',a',1)\\
&=\{x^2+y+ (\lambda + \mu) z^{2d}=yu+z^d=0\}\subset \hat{\mathbb{C}}_{x,y,z,u}^4/\frac{1}{r'+2}(r'+1,r',a',1)\\
&\simeq \{ -(x^2+(\lambda + \mu) z^{2d})u+z^d=0\}\subset \hat{\mathbb{C}}_{x,z,u}^3/\frac{1}{r'+1}(r',a',1).
\end{align*}
By Lemma \ref{irred}.(2), $(-(x^2+(\lambda + \mu) z^{2d})u+z^d)$ is a prime ideal of the polynomial ring $\bC[x,z,u]$. The subset $\{ -(x^2+(\lambda + \mu) z^{2d})u+z^d=0\}\subset \hat{\mathbb{C}}_{x,z,u}^3$ is a prime divisor.
Hence $E'\cap U_t$ is a prime divisor of $U_t$ as well.

Since the three open subsets $E'\cap U_y$, $E'\cap U_u$ and $E'\cap U_t$ of $E'$ are prime divisors and $E'\cap U_y\cap U_u\cap U_t\neq \emptyset$ and $E'\cap \{y=u=t=0\}=\emptyset$, $E'$ is a prime divisor and this verifies Lemma \ref{cDlower2}.
\end{proof}

\begin{prop}\label{cDACC}  The Assumption A holds for $cD$. Hence
$\mathcal{T}_{3,cD}^{\textup{can}}$ satisfies the ascending chain condition.
\end{prop}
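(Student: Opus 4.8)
The plan is to verify Assumption A for $cD$ points by mimicking the strategy already used for $cA$ (Proposition \ref{cAACC}) and $cA/n$ (Proposition \ref{cAnACC}), but now handling the two cases of Kawakita's classification (Case 1 and Case 2 as recalled in the proof of Proposition \ref{cD2}) separately. First I would fix an infinite increasing sequence $\ct_1<\ct_2<\cdots$ with each $\ct_k\in\mathcal{T}_{3,cD}^{\textup{can}}$, write $\ct_k=\ct(X_k,S_k)$ with $P_k\in X_k$ of type $cD$, realized by a weighted blow up $\sigma_k$. Passing to a subsequence I may assume all the $\sigma_k$ fall into the same case (Case 1 or Case 2), and that the discrete invariants $a_k$, $d_k$, and $r_k$ are non-decreasing; by \cite[Lemma 2.5]{Stepanov} I may also assume the Newton polytopes $\{\Gamma^+(f_k)\}$ are non-increasing, so that $w_i(f_i)\le w_i(f_j)$ for $i<j$, giving condition (1) of Assumption A.

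For Case 1, the weights are $w_k=(r_k+1,r_k,a_k,1)$ with $2r_k+1=a_kd_k$ and $d_k\ge 3$. For $i<j$ I would take the auxiliary weight $w^i_j=(r_i+1,r_i,a_i,1)$ on $X_j$ — that is, literally the weight $w_i$, viewed now on the normalized equation of $X_j$. Since $r_i\le r_j$ and $a_i\le a_j$, the comparison $w_i\preceq w^i_j$ is clear (here $n_i=n_j=1$). The key point is that $w^i_j$ satisfies the hypotheses of Lemma \ref{cDlower} with respect to $X_j$: one needs $2r_i+1=a_id_j$, which is \emph{not} automatic since $d_i$ and $d_j$ may differ. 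This is the subtlety I expect to be the main obstacle: unlike the $cA$ case where the auxiliary weight could be adjusted freely in the $r_2$-coordinate to match $a_id_j$, here the weights $(r+1,r,a,1)$ are rigidly tied by $2r+1=ad$. I would resolve it by instead choosing, for the auxiliary weight, a weight of the form $(r'+1,r',a_i,1)$ with $2r'+1=a_id_j$ — i.e. increase $r$ to $r'=\frac{a_id_j-1}{2}$ so that the relation holds with $d_j$ in place of $d_i$ — and then check $w_i\preceq w^i_j$ (which follows from $r'\ge r_i$, i.e. $d_j\ge d_i$) and apply Lemma \ref{cDlower} on $X_j$ to get an irreducible exceptional divisor with weighted discrepancy $a_i$. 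Parity of $a_id_j$ and oddness of $a_i$ guarantee $r'$ is an integer after possibly passing to a further subsequence where all $d_k$ have the same parity, or more simply by noting $a_k$ odd forces $d_k$ odd from $2r_k+1=a_kd_k$, so $a_id_j$ is odd and $r'$ is a well-defined integer.

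For Case 2, the weights are $w_k=(r_k+1,r_k,a_k,1,r_k+2)$ with $r_k+1=a_kd_k$, $d_k\ge 2$, and the analogous role is played by Lemma \ref{cDlower2}. Here I would take $w^i_j=(r'+1,r',a_i,1,r'+2)$ with $r'+1=a_id_j$, so $r'=a_id_j-1\ge a_id_i-1=r_i$, giving $w_i\preceq w^i_j$; Lemma \ref{cDlower2} then applies on $X_j$ to give an irreducible exceptional divisor of weighted discrepancy $a_i$, establishing condition (3). In both cases, having verified (1), (2), (3) of Assumption A, the ACC for $\mathcal{T}_{3,cD}^{\textup{can}}$ follows from Proposition \ref{assum}. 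The one bookkeeping item to be careful about throughout is that, once the type and case are fixed, Kawakita's normal form lets us put all the $X_k$ into the same ambient equation shape, so that the weights $w_i$ genuinely make sense on $X_j$ and the Newton-diagram comparison of \cite[Lemma 2.5]{Stepanov} is legitimate — exactly as in the footnote to Assumption A.
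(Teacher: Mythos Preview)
Your proposal is correct and follows essentially the same approach as the paper: after passing to a subsequence so that $\{a_k\}$, $\{d_k\}$ are non-decreasing and the Newton polytopes are non-increasing, the paper also chooses the auxiliary weight $w^i_j=(s^i_j+1,s^i_j,a_i,1)$ with $2s^i_j+1=a_id_j$ in Case~1 (your $r'$) and $w^i_j=(s^i_j+1,s^i_j,a_i,1,s^i_j+2)$ with $s^i_j+1=a_id_j$ in Case~2, and then invokes Lemmas~\ref{cDlower} and~\ref{cDlower2} respectively. Your observation that $2r_k+1=a_kd_k$ forces both $a_k$ and $d_k$ odd, so that $r'=\tfrac{a_id_j-1}{2}$ is automatically an integer, is a detail the paper leaves implicit; note also that the paper picks $i<j$ with $a_i<a_j$ strictly (needed for the hypothesis $a'<a$ in the lemmas), which is always possible since an infinite strictly increasing sequence $\{\ct_k\}$ cannot have bounded weighted discrepancies.
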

\begin{proof}
Suppose on the contrary that there is an infinite increasing sequence $\ct_1<\ct_2<\ct_3<\cdots$ with each $\ct_k\in \mathcal{T}_{3,cD}^{\textup{can}}$.
Note that each $\ct_k=\ct(X_k,S_k)$ is realized by the divisorial contraction $\sigma_k: Y_k \to X_k$.
Theorem 1.2(ii) in \cite{Kawakita05} indicates that $P_k\in X_k$ and $\sigma_k$ are described in case 1 and case 2.

\noindent
{\bf Case 1.} Suppose $P_k\in X_k$ is defined by the analytical identification
\[o\in (\varphi_k:= x^2+xq_k(z,u)+y^2u+\lambda_k y z^2+\mu_k z^3+p_k(y,z,u=0)\subset \hat{\mathbb{C}}^4,\] where $o$ denotes the origin of $\hat{\mathbb{C}}^4$
and each $\sigma_k$ is a weighted blow up of weight $w_k=wt(x,y,z,u)=(r_{k}+1,r_{k},a_k,1)$ satisfying the following:
\begin{itemize}
\item $2r_k+1=a_k d_k$ with $d_k \ge 3$;
\item $w_k(\varphi_k)=w_k(y^2u)=2r_k+1$; furthermore, $w_k(xq_k(z,u))=2r_k+1$ if $q_k\neq 0$.
\item $z^{d_k}\in \varphi_k$ with $w_k(z^{d_k})=w_k(\varphi_k)$ by Claim \ref{z^dcD}.
\end{itemize}

Passing to a subsequence, we may assume that the  sequences $\{a_k\}, \{d_k\}$ are non-decreasing. It follows from \cite[Lemma 2.5]{Stepanov} that we may assume the sequence of Newton polytopes  $\{\Gamma^+(f_k)\}$ is non-increasing.

We now pick some $i<j$ such that $a_i < a_j$. Let $s^i_j$ be  an integer such that $2s^i_j+1=a_id_j$ and pick $w^i_j:=(s^i_j+1,s^i_j,a_i,1)$.
Clearly, $w_i \preceq w^i_j$. Also, since $\Gamma^+(f_i) \supseteq \Gamma^+(f_j)$, one has $w_i(f_i) \le w_i(f_j)$. By Lemma \ref{cDlower}, the Assumption A holds.

\noindent {\bf Case 2.} Suppose $P_k\in X_k$ is defined by the analytical identification
$$o\in \left( \begin{array}{ll}
\varphi_{k1} \colon  x^2+yt+p_k(y,z,u)=0 ;\\
 \varphi_{k2} \colon yu+z^{d_k}+q_k(z,u)u+t=0. \\
\end{array} \right) \subset \hat{\mathbb{C}}^5,$$ where $o$ denotes the origin of $\hat{\mathbb{C}}^5$
and each $\sigma_k$ is a weighted blow up of weight $w_k=wt(x,y,z,u,t)=(r_{k}+1,r_{k},a_k,1,r_k+2)$ satisfying the following:
\begin{itemize}
\item $ r_k+1=a_kd_k$ with $d_k >1$;
\item $w_k(\varphi_{k1})=2(r_k+1)$;
\item $w_k(\varphi_{k2})=r_k+1$; moreover, $w_k(q_k(z,u)u)=r_k+1$ if $q_k\neq 0$.
\end{itemize}

Passing to a subsequence, we may assume that the  sequences $\{a_k\}, \{d_k\}$ are non-decreasing.
By \cite[Lemma 2.5]{Stepanov}, we may assume the sequence of Newton polytopes  $\{\Gamma^+(f_k)\}$ is non-increasing.

We pick some $i<j$ such that $a_i < a_j$.  Let $s^i_j$ be the integer with $s^i_j+1=a_id_j$ and take $w^i_j:=(s^i_j+1,s^i_j,a_i,1,s^i_j+2)$.
Clearly, $w_i \preceq w^i_j$ and $w_i(f_i) \le w_i(f_j)$. 
By Lemma \ref{cDlower2}, the Assumption A holds and this completes the proof of the proposition.
\end{proof}

\section{canonical thresholds in $\mathcal{T}_{3, cD/2}^{\textup{can}}$}
In this section, we consider the canonical thresholds over $cD/2$ points with the weighted discrepancies $\ge 5$.

\begin{prop} \label{cD22}
We have $\mathcal{T}_{3, cD/2}^{\textup{can}} \cap (\frac{1}{2}, 1) \subseteq \{\frac{1}{2}+\frac{1}{t}\}_{t\ge 18}$.
\end{prop}
\begin{proof}
Suppose  that there is a $\ct(X,S)=\frac{a}{m} \in (\frac{1}{2},1)$ with $a\ge 5$. 
Note that $a\nmid m$ and $\sigma$ is classified by two cases according to Theorem 1.2 in \cite{Kawakita05}.

\noindent
{\bf Case 1.}  Suppose $\sigma$ is a weighted blow up $\sigma: Y \to X$ with weight $w=\frac{1}{2}(r+2,r,a,2)$ with center $P\in X$ by the analytical identification:
\[ o\in (\varphi: x^2+xzq(z^2,u)+y^2u+\lambda y z^{2\alpha -1}+p(z^2,u)=0 ) \subset \hat{\mathbb{C}}^4/\frac{1}{2}(1,1,1,0)\]
where $o$ denotes the origin of $\hat{\mathbb{C}}^4/\frac{1}{2}(1,1,1,0)$ such that the following holds:
\begin{itemize}
\item $r+1=ad$ where both $a$ and $r$ are odd;
\item $w(\varphi)=w(y^2u)=r+1$; furthermore, $w(xzq(z^2,u))=r+1$ if $q\neq 0$;
\end{itemize}
\begin{claim}\label{z^2dcD2}
$z^{2d} \in p(z^2,u)$.
\end{claim}
\begin{proof}[Proof of the Claim] 
Suppose that $z^{2d} \not\in p(z^2,u)$. Note that the exceptional set $E=\{\varphi^w=0\}\subset \mathbb{P}(r+2,r,a,2)$ is a prime divisor and $y^2u\in \varphi^w$, so $u\nmid \varphi^w$.
Now, both $x^2\not\in \varphi^w$ and $z^{2d} \not\in p(z^2,u)$.
There must exist an integer $i$ with $xz^i\in \varphi^w$ or $yz^{2\alpha -1}\in \varphi^w$ where $\alpha$ is an integer. In particular, $r+2+ai=2w(xz^i)=2r+2$ or $r+a(2\alpha-1)=2w(yz^{2\alpha -1})=2r+2$.
Together with the equation $r+1=ad$, we see
$$a|\gcd(r,r+1)=1 \textup{ or }a|\gcd(r+2,r+1)=1$$ which contradicts to the assumption $a\ge 5$. Hence we have verified the argument of Claim \ref{z^2dcD2} (See also \cite[3.1]{CJK14}).
\end{proof}

Similar to Lemma \ref{lower}, we have the following:
\begin{lem} \label{cD2lower}
Keep the notations as above. Suppose that there is another weight $w'=\frac{1}{2}(r'+2, r', a',2)$ satisfying $r'+1=a'd$ and $a' \leq a$. Let $\sigma': Y' \to X$ be the weighted blow up with weight $w'$. Then the exceptional divisor $E'$ of $\sigma'$ is a prime divisor.
\end{lem}

\begin{proof}
Note that if $a'=a$ then $w=w'$ and $\sigma=\sigma'$. We may assume that $a'<a$.
As in Lemma \ref{lower}, it is convenient to consider {\it truncated weight} $v=\frac{1}{2}(a,2)$ and $v'=\frac{1}{2}(a',2)$ on $\{z, u \}$. We have  $v' \succeq \frac{a'}{a} v$. 
Also, one sees $v'(z^ju^k)=\frac{a'}{a} v(z^ju^k)$ if and only if $k=0$.

For any $\frak{m} \in q(z^2,u)$, $w(xz \frak{m}) \ge r+1$ implies that $v(z\frak{m}) \ge \frac{r}{2}$ and
$$ v'(z\frak{m}) \ge \frac{a'}{a} v(z\frak{m}) \ge \frac{a'}{a}\cdot\frac{r}{2}> \frac{r'}{2}.$$
Hence $w'(xzq(z^2,u)) > r'+1$.

Now for any $\frak{m}=z^{j}u^{k}\in \varphi$, one has
$w'(\frak{m}) \ge \frac{a'}{a} w(\frak{m}) \ge r'+1$. Finally, for $\frak{m}=y z^{2\alpha-1}$,
$w(\frak{m}) \ge r+1$ implies that $v(z^{2\alpha-1}) \ge \frac{r+2}{2}$.
Hence $$ w'(\frak{m}) =\frac{r'}{2}+ v'(z^{2\alpha-1})\ge \frac{r'}{2}+ \frac{a'}{a}v(z^{2\alpha-1})\ge \frac{r'}{2}+ \frac{r'+1}{r+1}\cdot\frac{r+2}{2} > \frac{2r'+1}{2}.$$

Therefore, $w'(\varphi)=r'+1$ and $\varphi^{w'}=y^2u+z^{2d}$ or $\varphi^{w'}=y^2u+\lambda y z^{2\alpha -1}+z^{2d}$.
The exceptional set $E'$ is equal to
\[\{\varphi^{w'}=0\}\subset \mathbb{P}(r'+2, r', a',2)=\textup{Proj}\ \hat{\mathbb{C}}[x,y,z,u], \]
where $\hat{\mathbb{C}}[x,y,z,u]$ is a graded polynomial ring with wt$(x,y,z,u)=(r'+2, r', a',2)$.
From Lemma \ref{irred}.(1), $(\varphi^{w'})$ is a prime ideal of  $\hat{\mathbb{C}}[x,y,z,u]$. Thus, $E'$ is a prime divisor and this verifies Lemma \ref{cD2lower}.
\end{proof}

\begin{claim}\label{cD2d=2mr}
$d=2$, $y\in f$ and $m\le r$.
\end{claim}
\begin{proof}[Proof of the Claim] 
Let $s=d-1$ and $\sigma':Y'\to X$ be the weighted blow up of weight $\frac{1}{2}(s+2,s,1,2)$.
It follows from  Lemma \ref{cD2lower} that $\sigma'$ has prime exceptional divisor.
By Lemma \ref{mainlem},  one sees the weighted multiplicity $m'=2w'(f)=1$ where the Weil divisor $S$ is given by $f=0$.
If $d\ge 3$, then $z \in f$, and hence $m \le a$. One has a contradiction that $1> \ct(X,S)=\frac{a}{m}\ge 1$.
If $d=2$, we have $y\in f$. So $m=w(f)\le w(y)=r$. Claim \ref{cD2d=2mr} is verified.
\end{proof}

We then consider weighted blow up with weight $w_{a-2}=\frac{1}{2}(r-2,r-4,a-2,2)$. By Lemma \ref{cD2lower}, the weighted blow up with weight $w_{a-2}$ has prime exceptional divisor. Denote by $m_{a-2}:=2w_{a-2}(f)$ the weighted multiplicity. 
The argument of Claim \ref{m=r_2} give the following. 
\begin{claim}\label{m=r}
$m=r$. 
\end{claim}
\begin{proof}[Proof of the Claim] 
Suppose on the contrary. By Claim \ref{cD2d=2mr}, $m<r$. 
By Lemma 2.1, \[ m-\lceil \frac{2m}{a} \rceil =\lfloor \frac{a-2}{a}m\rfloor \ge m_{a-2}\ge \lceil \frac{r-4}{r}m\rceil=m-\lfloor \frac{4m}{r} \rfloor. \]
In particular, $\lfloor \frac{4m}{r} \rfloor\ge \lceil \frac{2m}{a} \rceil $.
From the assumption $\ct(X,S)=\frac{a}{m}\in (1/2,1)$ and the condition $m<r$, we have $\lfloor \frac{4m}{r} \rfloor=\lceil \frac{2m}{a} \rceil=3$ and hence $m_{a-2}=m-3$. Now, $y\in f$ by Claim \ref{cD2d=2mr} and $f$ is semi-invariant. One sees $m_{a-2}=2w_{a-2}(f)\equiv 2w_{a-2}(y)\equiv r-4$ (mod $2$). On the other hand, $m_{a-2}=m-3=2w(f)-3\equiv 2w(y)-3\equiv r-3$ (mod $2$). This leads to a contradiction and Claim \ref{m=r} is verified.
\end{proof} 
Recall that $r=ad-1=2a-1$ by Claim \ref{cD2d=2mr}. It follows from Claim \ref{m=r} that  
\[\ct(X,S)=\frac{a}{m}=\frac{a}{r}=\frac{a}{2a-1}=\frac{1}{2}+\frac{1}{2(2a-1)}.\]

\noindent
{\bf Case 2.}  Suppose $\sigma$ is a weighted blow up with weight $w=\frac{1}{2}(r+2,r,a,2,r+4)$ with center $P\in X$ by the analytical identification:
$$o\in \left( \begin{array}{ll}
\varphi_{1}:= x^2+yt+p(z^2,u)=0 \\
 \varphi_{2}:= yu+z^{2d+1}+q(z^{2},u)zu+t=0 \\
\end{array} \right) \textup{ in } \hat{\mathbb{C}}_{x,y,z,u,t}^5/\frac{1}{2}(1,1,1,0,1),$$ where $o$ denotes the origin of $\hat{\mathbb{C}}_{x,y,z,u,t}^5/\frac{1}{2}(1,1,1,0,1)$ such that
\begin{itemize}
\item $r+2=a(2d+1)$ where $d$ is a positive integer;
\item $w(\varphi_{1})=r+2$;
\item $w(\varphi_{2})=\frac{r+2}{2}$; moreover, $w(q(z^2,u)zu)=\frac{r+2}{2}$ if $q(z^2,u)\neq 0$.
\end{itemize}

We consider the weighted blow up with weight $w'=\frac{1}{2}(2d+1,2d+1,1,2, 2d+1)$. It has prime exceptional divisor by Lemma \ref{1/2(2d+1,2d+1,1,2, 2d+1)} (see also \cite[3.2]{CJK14}).
One sees the weighted multiplicity $m':=2w'(f)=1$ by $\frac{1}{m'} \ge \ct(X,S) > \frac{1}{2}$. In particular, $z\in f$. This implies $m\leq a$ and thus $\ct(X,S)\ge 1$, a contradiction. This completes the proof of Proposition \ref{cD22}.
\end{proof}

\begin{lem} \label{1/2(2d+1,2d+1,1,2, 2d+1)}
Keep the notations as in case 2 above. Suppose that $\sigma': Y' \to X$ be the weighted blow up with weight $w'=\frac{1}{2}(2d+1,2d+1,1,2, 2d+1)$. Then the exceptional set of $\sigma'$ is a prime divisor of $Y'$.
\end{lem}

\begin{proof}
It is convenient to consider {\it truncated weight} $v=\frac{1}{2}(a,1)$ and $v'=\frac{1}{2}(1,1)$ on $\{z, u \}$.  We have  $v' \succeq \frac{1}{a} v$ and that $v'(z^ju_k)=\frac{1}{a} v(z^jv^k)$ if and only if $k=0$.

It follows immediately that for $\frak{m} \in p(z^2,u)$ in $\varphi_1$ one has
$$w'(\frak{m})=v'(\frak{m}) \ge \frac{1}{a} v(\frak{m}) \ge \frac{1}{a}(r+2) =   2d+1,$$
and for $\frak{m} \in q(z^2,u)zu $ in $\varphi_2$, one has
$$w'(\frak{m})=v'(\frak{m}) > \frac{1}{a} v(\frak{m}) \ge \frac{1}{a}\cdot \frac{r+2}{2} =  \frac{2d+1}{2}.$$

Hence $w'(\varphi_1)=2d+1$ and $w'(\varphi_2)=\frac{2d+1}{2}$.
One sees that $$\varphi_1^{w'}=x^2+yt+\lambda z^{4d+2}, \ \ \varphi_2^{w'}=z^{2d+1}+t,$$ for some constant $\lambda\in \mathbb{C}$. Note that $\lambda$ may be zero (for instance, see Example 8.2 in \cite{Kawakita05}). Therefore,
\begin{align*}
E'&=\{x^2+yt+\lambda z^{4d+2}=z^{2d+1}+t=0\}\subset\mathbb{P}(2d+1,2d+1,1,2,2d+1)\\
&\simeq \{x^2-yz^{2d+1}+\lambda z^{4d+2}=0\}\subset\mathbb{P}(2d+1,2d+1,1,2)
\end{align*}
It follows from Lemma \ref{irred}.(1) that $(x^2-yz^{2d+1}+\lambda z^{4d+2})$ is a prime ideal of the graded polynomial ring $\bC[x,y,z,u]$ with wt$(x,y,z,u)=(2d+1,2d+1,1)$. Thus, $E'$ is a prime divisor of $Y'$ and we have verified Lemma \ref{1/2(2d+1,2d+1,1,2, 2d+1)}.
\end{proof}

We need the following.

\begin{lem} \label{cD2lower2}
Keep the notations as in case 2 above. Suppose that there is another weight $w'=\frac{1}{2}(r'+2, r', a', 2, r'+4)$ satisfying $r'+2=a'(2d+1)$ and $a' \leq a$. Let $\sigma': Y' \to X$ be the weighted blow up with weight $w'$. Then the exceptional divisor of $\sigma'$ is a prime divisor.
\end{lem}

\begin{proof}
The argument is similar to that of Lemma \ref{cDlower2}.
Note that if $a'=a$ then $w=w'$ and $\sigma=\sigma'$. We may assume that $a'<a$.
As in Lemma \ref{lower}, it is convenient to consider {\it truncated weight} $v=\frac{1}{2}(a,1)$ and $v'=\frac{1}{2}(a',1)$ on $\{z, u \}$.  We have  $v' \succeq \frac{a'}{a} v$. Also, one sees $v'(z^ju^k)=\frac{a'}{a} v(z^ju^k)$ if and only if $k=0$.

It follows immediately that for $\frak{m} \in p(z^2,u)$ in $\varphi_1$ one has
$$w'(\frak{m})=v'(\frak{m}) \ge \frac{a'}{a} v(\frak{m}) \ge \frac{a'}{a}(r+2) =   r'+2,$$
and for $\frak{m} \in q(z^2,u)zu $ in $\varphi_2$, one has
$$w'(\frak{m})=v'(\frak{m}) > \frac{a'}{a} v(\frak{m}) \ge \frac{a'}{a}\cdot \frac{r+2}{2} =  \frac{r'+2}{2}.$$

Hence $w'(\varphi_1)=r'+2$ and $w'(\varphi_2)=\frac{r'+2}{2}$.
One sees that $$\varphi_1^{w'}=x^2+yt+\lambda z^{4d+2}, \ \ \varphi_2^{w'}=yu+z^{2d+1},$$  for some constant $\lambda\in \mathbb{C}$. Note that $\lambda$ may be zero (for instance, see Example 8.2 in \cite{Kawakita05}). Therefore,
\[ E'=\{x^2+yt+\lambda z^{4d+2}=yu+z^{2d+1}=0\}\subset\mathbb{P}(r'+2, r', a', 2, r'+4).\]
Let $U_y=\{y\neq 0\}, U_u:=\{u\neq 0\}$ and $U_t:=\{t\neq 0\}$ be three affine open charts of $\mathbb{P}(r'+2, r', a', 2, r'+4)$.
Note that
\[E'\cap U_y\simeq \{x^2+t+\lambda z^{4d+2}=u+z^{2d+1}=0\}\subset \hat{\mathbb{C}}_{x,z,u,t}^4/\frac{1}{r'}(2,a',2,4)\simeq \hat{\mathbb{C}}_{x,z}^2/\frac{1}{r'}(2,a'),\]
which is a prime divisor. Next, we have
\begin{align*}
E'\cap U_u&\simeq \{x^2+yt+ \lambda z^{4d+2}=y+z^{2d+1}=0\}\subset \hat{\mathbb{C}}_{x,y,z,t}^4/\frac{1}{2}(1,1,1,0)\\
&= \{x^2-z^{2d+1}t+\lambda z^{4d+2}=y+z^d=0\}\subset \hat{\mathbb{C}}_{x,y,z,t}^4/\frac{1}{2}(1,1,1,0)\\
&\simeq \{x^2-z^{2d+1}t+\lambda z^{4d+2}=0\}\subset \hat{\mathbb{C}}_{x,z,t}^3/\frac{1}{2}(1,1,0).
\end{align*}
By Lemma \ref{irred}.(1), the subset $\{x^2-z^{2d+1}t+\lambda z^{4d+2}=0\}\subset \hat{\mathbb{C}}_{x,z,t}^3$ is a prime divisor. So $E'\cap U_u$ is a prime divisor of $U_u$.
Finally, $E'\cap U_t$ is isomorphic to
\begin{align*}
&\{x^2+y+ \lambda z^{4d+2}=yu+z^{2d+1}=0\}\subset \hat{\mathbb{C}}_{x,y,z,u}^4/\frac{1}{r'+4}(r'+2,r',a',2)\\
&\simeq \{ -(x^2+\lambda z^{4d+2})u+z^{2d+1}=0\}\subset \hat{\mathbb{C}}_{x,z,u}^3/\frac{1}{r'+4}(r'+2,a',2).
\end{align*}
By Lemma \ref{irred}.(2), $\{ -(x^2+\lambda z^{4d+2})u+z^{2d+1}=0\}\subset \hat{\mathbb{C}}_{x,z,u}^3$ is a prime divisor. Thus, $E'\cap U_t$ is a prime divisor of $U_t$.

Since the three open subsets $E'\cap U_y$, $E'\cap U_u$ and $E'\cap U_t$ of $E'$ are prime divisors and $E'\cap U_y\cap U_u\cap U_t\neq \emptyset$ and $E'\cap \{y=u=t=0\}=\emptyset$, $E'$ is a prime divisor and this verifies Lemma \ref{cD2lower2}.
\end{proof}

By Lemmas \ref{cD2lower} and  \ref{cD2lower2}, we observe the following.
\begin{prop}\label{cD2ACC}
The Assumption A holds for $cD/2$. Hence
$\mathcal{T}_{3,cD/2}^{\textup{can}}$ satisfies the ascending chain condition.
\end{prop}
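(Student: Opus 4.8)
The plan is to follow the template already used in Propositions \ref{cAACC}, \ref{cAnACC} and, most closely, \ref{cDACC}. Suppose for contradiction that there is an infinite strictly increasing sequence $\ct_1<\ct_2<\ct_3<\cdots$ with each $\ct_k=\ct(X_k,S_k)\in\mathcal{T}_{3,cD/2}^{\textup{can}}$, where $S_k=(f_k=0)$ locally near $P_k\in X_k$. Each $\ct_k$ is realized by a divisorial contraction $\sigma_k:Y_k\to X_k$, and Theorem 1.2(ii) of \cite{Kawakita05} forces $(P_k\in X_k,\sigma_k)$ to be of the shape described in Case 1 or Case 2 of the proof of Proposition \ref{cD22}; in either situation the index is $n_k=2$ and $\sigma_k$ is a weighted blow up with weights $w_k=\frac{1}{2}(r_k+2,r_k,a_k,2)$ (Case 1) or $w_k=\frac{1}{2}(r_k+2,r_k,a_k,2,r_k+4)$ (Case 2), with $r_k+1=a_kd_k$ (Case 1) resp. $r_k+2=a_kd_k$ (Case 2). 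First I would pass to a subsequence so that all the $\sigma_k$ lie in the \emph{same} case and so that $\{a_k\}$ and $\{d_k\}$ are non-decreasing, and — invoking \cite[Lemma 2.5]{Stepanov} — so that the Newton polytopes $\{\Gamma^+(f_k)\}$ are non-increasing. Note that here $n_k=2$ for every $k$, so, in contrast with the $cA/n$ case, no boundedness result for the index (analogous to Lemma \ref{boundindex}) is needed; I would also record that $\{a_k\}$ must be unbounded, since on any sub-sequence along which $a_k$ is constant the strict monotonicity of $\ct_k=a_k/m_k$ would force the positive integers $m_k$ to strictly decrease, which is absurd. Hence, after one further subsequence, $a_i<a_j$ whenever $i<j$.

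Next, for fixed $i<j$ I would construct the auxiliary weight $w^i_j$ by retaining the $\{z,u\}$- and the $t$-behaviour attached to $a_i$ while re-balancing the $x,y$-weights: in Case 1 take $w^i_j=\frac{1}{2}(s^i_j+2,s^i_j,a_i,2)$ with $s^i_j+1=a_id_j$, and in Case 2 take $w^i_j=\frac{1}{2}(s^i_j+2,s^i_j,a_i,2,s^i_j+4)$ with $s^i_j+2=a_id_j$. Since $d_j\ge d_i$ forces $s^i_j\ge r_i$ in both cases, one has $w_i\preceq w^i_j$, hence $n_iw_i\preceq n_jw^i_j$ as $n_i=n_j=2$ (condition (2) of Assumption A). Since $\Gamma^+(f_i)\supseteq\Gamma^+(f_j)$ and the linear functional $w_i$ has positive entries, $w_i(f_i)\le w_i(f_j)$, and $w_i(f_j)$ is well defined because the $cD/2$ quotient data is the same for all $k$ (condition (1)). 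Finally, the weighted blow up $\sigma^i_j:Y^i_j\to X_j$ with weights $w^i_j$ over $P_j\in X_j$ has irreducible exceptional divisor with weighted discrepancy $a_i$: this is exactly Lemma \ref{cD2lower} in Case 1 and Lemma \ref{cD2lower2} in Case 2, both applicable because $a_i<a_j$ (condition (3)). With Assumption A verified, Proposition \ref{assum} yields the desired contradiction, so $\mathcal{T}_{3,cD/2}^{\textup{can}}$ satisfies ACC.

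I expect the only genuinely delicate point to be condition (3), the irreducibility of the exceptional divisor of the auxiliary weighted blow up over $X_j$; but this has already been isolated and established in Lemmas \ref{cD2lower} and \ref{cD2lower2}, whose proofs check that $\varphi^{w^i_j}$ still contains $\{y^2u,z^{2d}\}$ in Case 1, and that $\varphi_1^{w^i_j}$ contains $\{x^2,yt,z^{2d}\}$ and $\varphi_2^{w^i_j}$ contains $\{yu,z^d\}$ in Case 2 — enough to cut out an irreducible divisor. Thus the argument is a routine transcription of the $cD$ case, simplified by the fact that the index is constantly equal to $2$.
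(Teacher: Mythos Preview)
Your proposal is correct and follows exactly the approach the paper intends: the paper's own proof is the single sentence ``By Lemma \ref{cD2lower} and \ref{cD2lower2}, one can use almost the same proof as Proposition \ref{cDACC},'' and you have faithfully spelled out those details, including the correct auxiliary weights $w^i_j$ in each case and the observation that the index is uniformly $2$ so no analogue of Lemma \ref{boundindex} is needed.
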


\begin{proof}
Suppose on the contrary that there is an infinite increasing sequence $\ct_1<\ct_2<\ct_3<\cdots$ with each $\ct_k\in \mathcal{T}_{3,cD/2}^{\textup{can}}$.
Note that each $\ct_k=\ct(X_k,S_k)$ is realized by the divisorial contraction $\sigma_k: Y_k \to X_k$.
Theorem 1.2(ii) in \cite{Kawakita05} shows that $P_k\in X_k$ and $\sigma_k$ are described in case 1 and case 2.

\noindent
{\bf Case 1.} Suppose $P_k\in X_k$ is defined by the analytical identification
\[o\in (\varphi_k: x^2+xzq_k(z^2,u)+y^2u+\lambda_k y z^{2\alpha_k -1}+p_k(z^2,u)=0) \textup{ in }\hat{\mathbb{C}}^4/\frac{1}{2}(1,1,1,0),\] where $o$ denotes the origin of $\hat{\mathbb{C}}^4/\frac{1}{2}(1,1,1,0)$
and $\sigma_k$ is a weighted blow up with weight $w_k=wt(x,y,z,u)=\frac{1}{2}(r_{k}+2,r_{k},a_k,2)$ satisfying the following:
\begin{itemize}
\item $ r_k+1=a_k d_k $ where both $a_k$ and $r_k$ are odd;
\item $w_k(\varphi_k)=w_k(y^2u)=r_k+1$; furthermore, $w_k(xzq_k(z^2,u))=r_k+1$ if $q_k\neq 0$.
\item $z^{2d_k}\in p_k(z^2,u)$ by Claim \ref{z^2dcD2}. 
\end{itemize}

Passing to a subsequence, we may assume that sequences $\{a_k\}, \{d_k\}$ are non-decreasing. By \cite[Lemma 2.5]{Stepanov}, we may assume the sequence of Newton polytopes  $\{\Gamma^+(f_k)\}$ is non-increasing.

We now pick some $i<j$ such that $a_i < a_j$. Let $s^i_j$ be the integer with $s^i_j+1=a_id_j$ and let $w^i_j:=\frac{1}{2}(s^i_j+2,s^i_j,a_i,2)$.
Clearly, $w_i \preceq w^i_j$.
Note that $f_j$ is semi-invariant under the weight $w_i$.
In particular, $w_i(f_j)$ is well-defined.
Also, since $\Gamma^+(f_i) \supseteq \Gamma^+(f_j)$, one has $w_i(f_i) \le w_i(f_j)$.
By Lemma \ref{cD2lower}, the Assumption A holds and thus the ACC follows.

\noindent {\bf Case 2.} Suppose $P_k\in X_k$ is defined by the analytical identification
$$o\in \left( \begin{array}{ll}
\varphi_{k1}:= x^2+yt+p_k(z^2,u)=0 \\
 \varphi_{k2}:= yu+z^{2d_k+1}+q_k(z^{2},u)zu+t=0 \\
\end{array} \right) \textup{ in } \hat{\mathbb{C}}_{x,y,z,u,t}^5/\frac{1}{2}(1,1,1,0,1),
$$ where $o$ denotes the origin of $\hat{\mathbb{C}}^5/\frac{1}{2}(1,1,1,0,1)$
and each $\sigma_k$ is a weighted blowup of weight $w_k=wt(x,y,z,u,t)=\frac{1}{2}(r_{k}+2,r_{k},a_k,2,r_k+4)$ satisfying the following:

\begin{itemize}
\item $r_k+2=a_k(2d_k+1)$ where $d_k$ is a positive integer;
\item $w(\varphi_{k1})=r_k+2$;
\item $w(\varphi_{k2})=\frac{r_k+2}{2}$; moreover, $w(q_k(z^2,u)zu)=\frac{r_k+2}{2}$ if $q_k(z^2,u)\neq 0$.
\end{itemize}

Passing to a subsequence, we may assume that sequences $\{a_k\}, \{d_k\}$ are non-decreasing. By \cite[Lemma 2.5]{Stepanov}, we may assume the sequence of Newton polytopes  $\{\Gamma^+(f_k)\}$ is non-increasing.

We now pick some $i<j$ such that $a_i < a_j$.
Let $s^i_j$ be the integer with $s^i_j+2=a_id_j$ and $w^i_j:=\frac{1}{2}(s^i_j+2,s^i_j,a_i,2,s^i_j+4)$.
Clearly, $w_i \preceq w^i_j$.
Note that $f_j$ is semi-invariant under the weight $w_i$ and hence $w_i(f_j)$ is well-defined.
Also, since $\Gamma^+(f_i) \supseteq \Gamma^+(f_j)$, one has $w_i(f_i) \le w_i(f_j)$.
By Lemma \ref{cD2lower2}, the Assumption A holds and thus the ACC follows.
\end{proof}

\section{Summary and open problems}

Recall that the set $\mathcal{T}_{3}^{\textup{can}}$ of threefold canonical thresholds has a decomposition in $\dagger_2$:
 $$\mathcal{T}_{3}^{\textup{can}} = \aleph_4 \cup \mathcal{T}_{3,sm}^{\textup{can}} \cup \mathcal{T}_{3, cA}^{\textup{can}} \cup \mathcal{T}_{3, cA/n}^{\textup{can}} \cup \mathcal{T}_{3,cD}^{\textup{can}} \cup \mathcal{T}_{3, cD/2}^{\textup{can}},$$
where $$\aleph_4 =\{\ \frac{q}{n}\ |\ q \textup{ and $n$ are positive integers with }q\le 4\}$$ is an ACC set.
By \cite[Theorem 1.7]{Stepanov}, Propositions \ref{cAACC}, \ref{cAnACC}, \ref{cDACC}, \ref{cD2ACC} and Theorem \ref{3-folddc}, we obtain Theorem \ref{3CTACC}.
Also, it is easy to verify that
$$ \aleph_4\cap (\frac{1}{2},1)\subseteq \{\frac{1}{2}+\frac{1}{n}\}_{n\ge 3} \cup \{\frac{4}{5}\}.$$
It follows from \cite[Example 3.11]{Prok08} that $\ct_P(X,S)=\frac{4}{5}\in \aleph_4\cap (\frac{1}{2},1)$ when $P\in X$ is of type $cA_1$ given by $xy+x^7+z^2+u^3=0$ and $S$ is the Weil divisor $y=0$.
By Propositions \ref{sm2}, \ref{cA2}, \ref{cAn2}, \ref{cD2}, \ref{cD22} and Theorem \ref{3-folddc},
we derive Theorem \ref{3CT2}.

From our studies, it seems natural to consider the following problems.

\begin{ques}
For every integer $k>2$, is it possible to determine completely the set $\mathcal{T}_{3}^{\textup{can}}\cap (\frac{1}{k},\frac{1}{k-1})$ ?
\end{ques}

Recall that canonical threshold is involved in Sarkisov degree $(\mu, c, e)$, which is used to prove the termination of Sarkisov program. It is natural to ask the following.
\begin{ques}
Can we have an effective bound on ``length of Sarkisov program'' in dimension three?
\end{ques}

\end{document}